\newcommand\textcyr[1]{{\cyr #1}}
\newcommand{\uhr}{\mathbin{\lceil}}
\renewcommand{\Tilde}{\widetilde}
\renewcommand{\Hat}{\widehat}
\newcommand{\RR}{\mathbb{R}}
\newcommand{\ZZ}{\mathbb{Z}}
\newcommand{\CC}{\mathbb{C}}
\newcommand{\NN}{\mathbb{N}}
\newcommand{\cH}{\mathcal{H}}
\newcommand{\cG}{\mathcal{G}}
\newcommand{\cB}{\mathcal{B}}
\newcommand{\cD}{\mathcal{D}}
\newcommand{\ELL}{\mathcal{L}}
\newtheorem{theorem}{Theorem}
\newtheorem*{theorem*}{Theorem}
\newtheorem{prop}[theorem]{Proposition}
\newtheorem{lemma}[theorem]{Lemma}
\newtheorem{corol}[theorem]{Corollary}
\theoremstyle{definition}
\newtheorem{defin}[theorem]{Definition}
\DeclareMathOperator{\supp}{supp}
\DeclareMathOperator{\spec}{spec}
\DeclareMathOperator{\dom}{dom}
\DeclareMathOperator{\ran}{ran}
\begin{document}

\title{\bf New Relations Between Discrete and~Continuous Transition Operators on~(Metric) Graphs}

\author{{\sc Daniel Lenz} \dag\quad and \quad {\sc Konstantin Pankrashkin} \ddag\\[\bigskipamount]
\dag{} \small Mathematisches Institut\\
\small Fakult\"at f\"ur Mathematik und Informatik\\
\small Friedrich-Schiller-Universit\"at Jena\\
\small Ernst-Abbe-Platz 2, 07743 Jena, Germany\\
\small Webpage: \url{http://www.analysis-lenz.uni-jena.de}\\
\small E-mail: \url{daniel.lenz@uni-jena.de}\\[\medskipamount]
\ddag{} \small Laboratoire de math\'ematiques, UMR 8628\\
\small Universit\'e Paris-Sud, B\^atiment 425\\
\small 91405 Orsay Cedex, France\\
\small Webpage: \url{http://www.math.u-psud.fr/~pankrash/}\\
\small E-mail: \url{konstantin.pankrashkin@math.u-psud.fr}
}

\date{}

\maketitle

\begin{abstract}
We establish several new relations between the discrete transition operator, the continuous Laplacian
and the averaging operator associated with combinatorial and metric graphs. It is shown that these operators
can be expressed through each other using explicit expressions. In particular, we show that
the averaging operator is closely related with the solutions of the associated wave equation.
The machinery used allows one to study a class of infinite graphs without assumption
on the local finiteness.
\end{abstract}


\section{Introduction}

Given a graph, one can define various associated operators, the discrete transition operator
and the continuous Laplacian being among the most prominent ones. A less known example is the continuous
transition operator (averaging operator) introduced in the papers \cite{CW,sw} which served as an immediate
motivation for the present work. In fact, it is known that the spectral properties 
of the three operators are closely related, see e.g.~\cite{CW,Cat}.
The aim of the present paper is to show that the relation is actually much more intimate,
and that the operators can be expressed through each other in a rather explicit way.
We present a machinery which allows one to bring the previous studies
to a common denominator and to work under very weak assumptions on the graph structure; in particular,
we are able to study a class of graphs which are not locally finite.
In addition, we establish a new link between the continuous transition operator
and the wave equation by adapting a d'Alembert-type formula we discovered in
the works~\cite{kop1,kop2} and which seems to be virtually unknown.
To be more precise and to present the results, let us introduce some notation;
we mostly follow the conventions used in the paper~\cite{CW}.

Let $X$ be a countable connected graph with symmetric neighborhood relation $\sim$
and without loops and multiple edges. It will be viewed as a one-complex by identifying each edge
with a copy of the interval $(0,1)$ and by gluing the edges together
at common endpoints; the symbol $X^0$ stands for the set of the vertices of $X$, and $X^1$ denotes its one-skeleton (i.e.
the associated \emph{metric graph}).
We denote by $(xy,t)$ the point of $X^1$ lying at distance $t\in[0,1]$ from $x$ on the non-oriented edge $[x,y]=[y,x]$, $x\sim y$.
One has the obvious identifications $(xy,0)\equiv x$ and $(xy,t)\equiv(yx,1-t)$.
In this way, the usual discrete graph metric
on the vertex set has a natural extension to $X^1$. 
Each edge $[x,y]$ will be equipped with a strictly positive conductance $c(xy)=c(yx)$, and
throughout the paper we assume that
\begin{equation}
        \label{eq-m0}
m^0(x):=\sum_{y:y\sim x} c(xy)<\infty  \qquad \text{ for any } x\in X^0.
\end{equation}
For a part of the results we will have to impose the stronger condition
\begin{equation}
     \label{eq-m1}
\sum_{y:y\sim x} \sqrt{c(xy)} <+\infty \qquad \text{ for any } x\in X^0,
\end{equation}
it will always be mentioned explicitly.
The function $m^0$ defines a discrete measure on $X^0$ which will be denoted by the same symbol; this gives rise
to the associated discrete Hilbert space $\ell^2(X^0,m^0)$ consisting of the functions
$h:X^0\mapsto \CC$ with
\[
\|h\|^2_{\ell^2(X^0,m^0)}=\sum_{x\in X^0}m^0(x)\big|h(x)\big|^2<\infty.
\]

On $X^1$ we introduce the continuous weighted Lebesgue measure $m^1$ which at
the point $(xy,t)$ is given by $c(xy)dt$ if $t\in(0,1)$, and the vertex set
has $m^1$-measure equal to zero. 
The pair $(X,c)$ together with the above measures
is usually called a \emph{network}.

For any function $F:X^1\to\CC$ and for $x,y\in X^0$ with $y\sim x$ denote by $F_{xy}$ the function
$t\mapsto F(xy,t)$, then the Hilbert space $\ELL^2(X^1,m^1)$ is exactly the space of all measurable
functions such that
\[
\|F\|^2_{\ELL^2(X^1,m^1)}=\dfrac{1}{2}\sum_{x\in X^0}\sum_{y:y\sim x}c(xy) \|F_{xy}\|^2_{\ELL^2(0,1)}<\infty;
\]
the factor $\frac 12$ is due to the fact that each edge appears twice in the sum.

Let us introduce several operators associated with a network.
The first one is the discrete transition operator $P$ acting on functions $g:X^0\to \CC$ by
\begin{equation}
          \label{eq-opp}
P g(x)=\dfrac{1}{m^0(x)}\sum_{y:y\sim x} c(xy)g(y);
\end{equation}
it is a bounded self-adjoint operator in $\ell^2(X^0,m^0)$
whose norm does not exceed $1$.
The second one is the continuous (positive) Laplace operator $L$ acting in the space
$\ELL^2(X^1,m^1)$ as the second derivative. More precisely,
for any function $F$ the prime sign will denote the derivation
with respect to the length parameter, i.e.
$F'(xy,t):=F'_{xy}(t)$.
For $k\in\NN$ introduce the space
\[
\Hat H^k(X^1,m^1):=\big\{
F\in \ELL^2(X^1,m^1):\, F^{(j)}\in \ELL^2(X^1,m^1),
\quad j=1,\dots,k \big\},
\]
where $F^{(j)}$ means the $j$th order derivative,
then the operator $L$ acts as the
$L F=-F''$ on functions
$F\in \Hat H^2(X^1,m^1)$
 satisfying
the boundary conditions
\begin{gather} 
              \label{eq-fcont}
F(xu,0)=F(xv,0)=: F(x) \text{ for all } x,u,v\in X^0 \text{ with } u,v\sim x,\\
              \label{eq-dder}
F'(x)=0 \text{ for all } x\in X^0,
\end{gather}
where we denote
\begin{equation}
              \label{eq-fder}
F'(x):=\sum_{y:y\sim x} c(xy)F'(xy,0+);
\end{equation}
it is an unbounded self-adjoint operator in $\ELL^2(X^1,m^1)$, see Theorem~\ref{th1} below.
Finally, the third operator $A$ is the averaging operator over unit balls
introduced in \cite{CW} in generalizing the notion of a transition operator \cite{sw};
it acts as
\begin{multline*}
AF(xy,t)=\dfrac{1}{m^0(x)}\sum_{u:u\sim x}
c(xu)\int_0^{1-t} F(xu,s)ds\\
+\dfrac{1}{m^0(y)}\sum_{v:v\sim y}
c(yv)\int_0^t F(yv,s)ds,
\end{multline*}
and it can be easily checked that $A$ is a  bounded self-adjoint operator in the space $\ELL^2(X^1,m^1)$.

We remark that the study of the operator $P$ is a classical topic of the spectral theory of combinatorial
graphs, cf. the monographs \cite{chung,cdv1,soardi,woess}
and the survey \cite{mw}. The operator $L$ and more general differential operators are actively studied
during the last decades, see the monographs \cite{fam,BK,DZ,pok,post}
and the collections of papers~\cite{kuc1,lum,snow,AGA}, sometimes under the name~\emph{quantum graphs}.

The three operators are closely related to each other.
It seems that the first relations between $P$ and $L$ on finite graphs and on some specific infinite graphs
were obtained in~\cite{vB,nic1,nic2,roth}, and the paper~\cite{Cat} was the first one which
studied this question for infinite graphs without any specific requirement for their structure,
and  the following result was shown under some technical assumptions:
\begin{prop}[Theorems 1 and 3 in \cite{Cat}]\label{prop1}
For $\lambda\notin\Sigma:=\big\{(\pi n)^2:n\in\NN\big\}$ and $\star\in\{\mathrm{p},\mathrm{c}\}$
there holds: $\lambda\in \spec_\star L$ if and only if $\cos\sqrt\lambda \in \spec_\star P$.
\end{prop}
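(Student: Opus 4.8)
The plan is to reduce the spectral analysis of $L$ to that of $P$ by solving the eigenvalue equation $-F''=\lambda F$ edge by edge. Fix $\lambda\notin\Sigma$ and write $k=\sqrt\lambda$, so that $\sin k\neq 0$ precisely because $\lambda\notin\Sigma$. On a single edge the boundary value problem $-u''=\lambda u$, $u(0)=a$, $u(1)=b$, then has the unique solution $u(t)=a\,\tfrac{\sin k(1-t)}{\sin k}+b\,\tfrac{\sin kt}{\sin k}$. Given any $h:X^0\to\CC$, I would therefore define its harmonic extension $F=J_\lambda h$ by $(J_\lambda h)(xy,t)=h(x)\tfrac{\sin k(1-t)}{\sin k}+h(y)\tfrac{\sin kt}{\sin k}$; this $F$ automatically satisfies the continuity condition \eqref{eq-fcont} with $F(x)=h(x)$ and solves $-F''=\lambda F$ on every edge. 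Differentiating and summing over neighbours gives the crucial trace identity
\[
F'(x)=\sum_{y:y\sim x}c(xy)F'(xy,0+)=\frac{k}{\sin k}\,m^0(x)\big[(P-\cos k)h\big](x),
\]
so that the Kirchhoff condition \eqref{eq-dder} holds for all $x$ if and only if $Ph=\cos\sqrt\lambda\,h$. Conversely, because of uniqueness on edges, any $F\in\dom L$ with $LF=\lambda F$ equals $J_\lambda h$ for $h=(F(x))_{x\in X^0}$. This matches the two eigenvalue equations at the formal level.

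To turn this into a statement about the point spectrum I would control norms. Integrating on each edge and using $\sum_{y\sim x}c(xy)h(y)=m^0(x)(Ph)(x)$ to reorganize the cross terms yields
\[
\|J_\lambda h\|_{\ELL^2(X^1,m^1)}^2=\big\langle(\alpha(k)\,I+\beta(k)\,P)h,\,h\big\rangle_{\ell^2(X^0,m^0)},
\]
where $\alpha(k)=\int_0^1\big(\tfrac{\sin kt}{\sin k}\big)^2dt$ and $\beta(k)=\int_0^1\tfrac{\sin k(1-t)}{\sin k}\tfrac{\sin kt}{\sin k}\,dt$. Since $\|P\|\le1$ one has $\spec(\alpha I+\beta P)\subset[\alpha-|\beta|,\alpha+|\beta|]$, and evaluating the quadratic form on the two-point configurations $h(y)=\pm h(x)$ shows $\alpha\pm\beta>0$ for $\lambda\notin\Sigma$ (the degenerate cases $\alpha\pm\beta=0$ occur exactly at $k\in\pi\ZZ$). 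Hence $\alpha I+\beta P$ is bounded and boundedly invertible, so $J_\lambda$ is a topological isomorphism of $\ell^2(X^0,m^0)$ onto the closed subspace of edgewise $\lambda$-harmonic $\ELL^2$ functions; an entirely analogous bound for $\|F'\|$ guarantees $J_\lambda h\in\Hat H^2(X^1,m^1)$. Consequently $J_\lambda$ restricts to a bijection between $\ker(P-\cos\sqrt\lambda)$ and $\ker(L-\lambda)$, which settles the case $\star=\mathrm{p}$.

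For the continuous spectrum I would exploit that $L$ and $P$ are self-adjoint, so neither has residual spectrum and $\spec_\mathrm{c}=\spec\setminus\spec_\mathrm{p}$ for each operator. It therefore suffices to add the equivalence $\lambda\in\spec L\Leftrightarrow\cos\sqrt\lambda\in\spec P$ for the full spectra; combined with the point-spectrum equivalence this gives $\star=\mathrm{c}$ automatically. To prove the full-spectrum equivalence I would introduce the decoupled Dirichlet Laplacian $L_D$ acting edgewise with Dirichlet conditions at all vertices, whose spectrum is exactly $\Sigma$, so that $(L_D-\lambda)^{-1}$ is bounded for $\lambda\notin\Sigma$. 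Every $F\in\dom L$ then decomposes as $F=(L_D-\lambda)^{-1}(L-\lambda)F+J_\lambda h$ with $h=(F(x))_{x\in X^0}$, and applying the derivative functional $F\mapsto(F'(x))_x$ together with the trace identity of the first paragraph produces a Krein-type relation in which the operator $\tfrac{\sqrt\lambda}{\sin\sqrt\lambda}(P-\cos\sqrt\lambda)$ governs the invertibility of $L-\lambda$. Equivalently, I would transport approximate eigenvectors through $J_\lambda$: a normalized sequence $h_n$ with $\|(P-\cos\sqrt\lambda)h_n\|\to0$ produces $F_n=J_\lambda h_n$ with $\|F_n\|$ bounded below, solving $-F_n''=\lambda F_n$ exactly but satisfying \eqref{eq-dder} only up to the small defect $\tfrac{k}{\sin k}m^0(P-\cos\sqrt\lambda)h_n$, and conversely.

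The main obstacle is exactly this continuous-spectrum step. The functions $F_n=J_\lambda h_n$ become genuine Weyl sequences for $L$ only after a small correction lying in $\dom L$ that absorbs the Kirchhoff defect; constructing such a correction, and showing it is small in $\ELL^2$ while keeping $\|(L-\lambda)\cdot\|$ controlled, is delicate because the defect is distributed over the infinitely many edges meeting at each vertex. It is here that the finiteness of $m^0$ in \eqref{eq-m0} is not by itself sufficient and the stronger summability \eqref{eq-m1} enters, ensuring that the traces and the auxiliary maps $\Gamma_0:F\mapsto(F(x))_x$ and $\Gamma_1:F\mapsto(F'(x))_x$ are well defined and bounded on the maximal domain with no local-finiteness assumption. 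Once these boundedness properties are in place, the resolvent (or Weyl-sequence) argument closes both implications, and the three cases combine to give the proposition.
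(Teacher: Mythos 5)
Your first two paragraphs are, in substance, exactly the computation at the heart of the paper's treatment: your $J_\lambda$ is the Krein $\gamma$-field $\gamma(\lambda)$ of Eq.~\eqref{eq-fxy}, and your trace identity is precisely the paper's Weyl function $M(\lambda)=\big(P-\cos\sqrt\lambda\,\big)/\Phi(\sqrt\lambda,1)$ computed in the proof of Theorem~\ref{th1}; the point-spectrum equivalence and the identity $\ker(L-\lambda)=\gamma(\lambda)\ker(P-\cos\sqrt\lambda)$ then follow as you say (modulo the detail that for an eigenfunction $F$ you must first check that $h=(F(x))_{x\in X^0}$ lies in $\ell^2(X^0,m^0)$, which the paper obtains from the Sobolev trace estimate in Lemma~\ref{lem5} before any quadratic-form identity can be applied). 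The genuine gap is the case $\star=\mathrm{c}$. After correctly reducing it to the full-spectrum equivalence $\lambda\in\spec L\Leftrightarrow\cos\sqrt\lambda\in\spec P$, you only sketch two possible routes --- a Krein-type resolvent relation, or Weyl sequences transported through $J_\lambda$ --- and you explicitly concede that you cannot carry either out (the ``correction absorbing the Kirchhoff defect'' is never constructed, and the Krein relation is never proved or cited in a usable form). This missing step is exactly what the paper's abstract machinery supplies: Lemma~\ref{lem5} shows $(\ell^2(X^0,m^0),\Gamma,\Gamma')$ is a boundary triple for $S$, and Theorem~\ref{thmkp}(d) (i.e.\ Theorem~3.16 of \cite{BGP08}, which rests on the Krein resolvent formula relating $(L-\lambda)^{-1}$, the decoupled Dirichlet resolvent and $M(\lambda)^{-1}$) converts invertibility of $M(\lambda)$, hence of $P-\cos\sqrt\lambda$, into invertibility of $L-\lambda$ and back. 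Without such a result the spectra, and hence $\spec_\mathrm{c}$, are not related.

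Moreover, your diagnosis of where the difficulty lies is wrong. You claim that \eqref{eq-m0} is insufficient to make the trace maps $F\mapsto(F(x))_x$ and $F\mapsto(F'(x))_x$ well defined and bounded on the maximal domain, and that \eqref{eq-m1} must enter at this point. In fact Lemma~\ref{lem5} proves exactly these mapping properties, together with the Green identity and the surjectivity of $(\Gamma,\Gamma')$, under \eqref{eq-m0} alone, using only Cauchy--Schwarz and the Sobolev inequality \eqref{eq-sob}; accordingly Theorem~\ref{th1}, which contains Proposition~\ref{prop1} as a special case, is proved in the paper without \eqref{eq-m1}. The condition \eqref{eq-m1} is needed only later, for the Fourier-type expansion of Section~\ref{sec3} and for Theorems~\ref{th4} and~\ref{th6} concerning $C(\tau)$ and $A$. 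So the ingredient your argument lacks is not a stronger summability hypothesis but the resolvent-formula (boundary-triple) argument linking the invertibility of $L-\lambda$ to that of $P-\cos\sqrt\lambda$.
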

The discrete set $\Sigma$ plays a special role, see subsection \ref{ssdir} below.
This relation was improved in various directions
for locally finite graphs and the uniform conductivity $c(xy)\equiv 1$. In particular, it was shown in \cite{BGP08} that 
the above relation also holds for the other spectral types, i.e. for $\star\in\{\mathrm{p},\mathrm{pp},\mathrm{disc},\mathrm{ess},\mathrm{ac},\mathrm{sc}\}$.
In \cite{KP11} it was shown that some functions of $L$ are unitarily equivalent to $P$, and in \cite{KP12}
a certain explicit form for the associated unitary transformation was obtained.
Similar relations for some other boundary conditions and slightly more general operators
were obtained e.g. in~\cite{vBM,exdual,KP06,KP11}, see also Section~3.6 in~\cite{BK}, Section~2.4 in~\cite{post}
and references therein. We also note that various relations between $P$ and $L$ and their generalizations
appear in the physics literature due to their importance for the theory of superconducting networks~\cite{alx}.

As for the operator $A$, the following result was obtained so far:
\begin{prop}[Theorems 1.3 and 1.5 in \cite{CW}]\label{prop2}
The spectrum of $A$ admits the following description:
\[
\spec A=\{0\}\bigcup \Big\{\dfrac{\sin\omega}{\omega}: \omega\in\RR\setminus\{0\}, \, \cos\omega\in\spec P
\Big\}\bigcup \Big( \,\{1\}\mathop{\cap}\spec P\Big).
\]
For the point spectrum we have
\[
\spec_\mathrm{p} A\setminus\{0\}=\Big\{\dfrac{\sin\omega}{\omega}: \omega\in\RR\setminus\{0\}, \, \cos\omega\in\spec_\mathrm{p} P
\Big\}
\bigcup \Big( \,\{1\}\mathop{\cap}\spec_\mathrm{p} P\Big),
\]
and $0$ belongs to $\spec_\mathrm{p} A$ unless $m^0(X^0)=\infty$ and $X$ is a tree with the property that after removal of any edge
at least of the connected components is recurrent \textup{(}we refer to subsection \ref{ssdir} for the exact definition of a recurrent network\textup{).}
\end{prop}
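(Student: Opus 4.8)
The plan is to deduce everything from a single operator identity, namely
\[
A=\frac{\sin\sqrt L}{\sqrt L}=:g(L),\qquad g(\lambda)=\frac{\sin\sqrt\lambda}{\sqrt\lambda}=\sum_{k\ge0}\frac{(-1)^k\lambda^k}{(2k+1)!},
\]
interpreted through the functional calculus of the self-adjoint operator $L$ (note that $g$ is entire, so $g(L)$ is a well-defined bounded operator). The conceptual source of this identity is the wave equation: the cosine propagator $\cos(t\sqrt L)$ solves $\partial_t^2u=-Lu$, and $\int_0^1\cos(t\sqrt L)\,dt=g(L)$. Since $\cos(t\sqrt L)F$ propagates at unit speed and, by the d'Alembert-type formula of \cite{kop1,kop2}, is given on each edge by an explicit average of $F$ over the metric sphere of radius $t$, integrating over $t\in[0,1]$ yields exactly the average of $F$ over the unit ball, which is the definition of $A$. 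I would make this rigorous by verifying $AF=\int_0^1\cos(t\sqrt L)F\,dt$ on a dense core of edgewise smooth, compactly supported functions, comparing the two sides edge by edge; the boundary conditions \eqref{eq-fcont}--\eqref{eq-dder} are precisely what make the d'Alembert averages agree with the vertex sums in the definition of $A$.

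Granting this identity, the two displayed spectral formulas follow from the spectral mapping theorem combined with Proposition~\ref{prop1}. As $g$ is continuous with $g(\lambda)\to0$ for $\lambda\to+\infty$, we have $\spec A=\overline{g(\spec L)}$, and I would split $\spec L\subseteq[0,\infty)$ accordingly. A point $\lambda>0$ with $\lambda\notin\Sigma$ lies in $\spec L$ iff $\cos\sqrt\lambda\in\spec P$, and writing $\omega=\sqrt\lambda$ turns its image $g(\lambda)$ into the term $\{\sin\omega/\omega:\omega\ne0,\ \cos\omega\in\spec P\}$; the value $\lambda=0$ lies in $\spec L$ iff $1=\cos0\in\spec P$ and maps to $g(0)=1$, producing $\{1\}\cap\spec P$; and every $\lambda\in\Sigma$ maps to $g(\lambda)=0$. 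Since $\spec L$ is unbounded (it always contains $\Sigma$), $0$ is a limit point of $g(\spec L)$, so the closure contributes the isolated point $\{0\}$. For the point spectrum I would use that $g^{-1}(\mu)$ is finite for every $\mu\ne0$, whence $\ker(A-\mu)=\bigoplus_{\lambda\in g^{-1}(\mu)}\ker(L-\lambda)$ by the functional calculus; combined with the point-spectrum half of Proposition~\ref{prop1} this yields the stated description of $\spec_{\mathrm{p}}A\setminus\{0\}$.

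The remaining and genuinely delicate point is to decide when $0\in\spec_{\mathrm{p}}A$. Here $\ker A=\ker g(L)=\ran E_L(\Sigma)$ is exactly the eigenspace of $L$ at the Dirichlet values $(\pi n)^2$, so the task is to decide whether $L$ has an $\ELL^2$-eigenfunction at some $(\pi n)^2$. Writing such an eigenfunction edgewise as $F(xy,t)=F(x)\cos(\pi n t)+\beta_{xy}\sin(\pi n t)$ and imposing \eqref{eq-fcont}--\eqref{eq-dder}, continuity forces $F(y)=(-1)^nF(x)$ across every edge while the Kirchhoff condition forces $\sum_{y\sim x}c(xy)\beta_{xy}=0$. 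This decomposes the eigenspace into a \emph{vertex part} (the data $F(x)$, with $|F|$ constant by connectedness, hence in $\ell^2(X^0,m^0)$ iff $m^0(X^0)<\infty$) and a \emph{flow part} (the edge data $\theta_{xy}=c(xy)\beta_{xy}$, a finite-energy divergence-free flow, antisymmetric for $n$ even and symmetric for $n$ odd). A nonzero vertex part exists iff $m^0(X^0)<\infty$, and if $X$ has a cycle then a flow supported on that cycle is a nonzero finite-energy divergence-free flow; either way $0\in\spec_{\mathrm{p}}A$. The hard case, and the main obstacle of the whole proof, is a tree with $m^0(X^0)=\infty$: the vertex part then vanishes, the cycle space is trivial, and a nonzero finite-energy divergence-free flow must run between two ends of the tree. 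I would settle this by network potential theory, using the characterization of transience via finite-energy unit flows to infinity: deleting an edge whose two sides are both transient lets one splice together a nonzero finite-energy flow through that edge, whereas if every edge-deletion leaves a recurrent component an energy/telescoping argument forces every divergence-free finite-energy flow to vanish — which is precisely the stated exceptional condition.
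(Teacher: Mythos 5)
Your overall strategy is exactly the one this paper follows: establish $A=\Phi(\sqrt L)=\sin\sqrt L/\sqrt L$ via the wave equation and the d'Alembert formula (Theorems~\ref{th4} and~\ref{th6}), apply the spectral mapping theorem and the finiteness of the fibers $g^{-1}(\mu)$, $\mu\ne 0$ (Corollary~\ref{corol1}), translate to $P$ via the $L$--$P$ correspondence (Theorem~\ref{th1}, refining Proposition~\ref{prop1}), and settle $0\in\spec_{\mathrm p}A$ by decomposing $\ker(L-\pi^2n^2)$ into a vertex part and a divergence-free finite-energy flow part and invoking the flow characterization of transience (Theorem~\ref{propker} and Corollary~\ref{corol13}, cited from \cite{Cat}). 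The second and third stages of your sketch are sound, including the correct parities ($a$ antisymmetric for $n$ even, symmetric for $n$ odd), the observation that unboundedness of $\spec L$ forces $0\in\spec A$, and the splicing argument on trees. However, there is a genuine gap at the pivotal first step. You treat the identity $AF=\int_0^1\cos(t\sqrt L)F\,dt$ as a routine verification ``on a dense core, comparing the two sides edge by edge,'' but the left-hand side $\cos(t\sqrt L)F$ has no a priori edgewise formula: asserting that it is given by the d'Alembert average \emph{is} the hard content (Theorem~\ref{th4}), and your plan is circular as stated. The alternative of checking that $\tau\mapsto C(\tau)F$ is a classical solution of the abstract wave equation and invoking uniqueness requires showing $C(\tau)F\in\dom L$ with $C^2$ dependence on $\tau$, which is exactly what fails to be routine without local finiteness; Proposition~\ref{prop3a} gives this only for finite graphs. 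The paper's way around this is the entire machinery of Section~\ref{sec3}: a Fourier-type expansion for $L$ with generalized eigenfunctions of the explicit regular form \eqref{eq-fbc} (Theorem~\ref{th3}), against which $C(\tau)F$ is paired to identify $U_jC(\tau)F(\lambda)=\cos(\tau\sqrt\lambda)\,U_jF(\lambda)$.

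A consequence you should also be aware of: Theorem~\ref{th3}, and hence Theorems~\ref{th4} and~\ref{th6}, require the stronger summability hypothesis \eqref{eq-m1}, not just \eqref{eq-m0}. Proposition~\ref{prop2} as stated is the result of \cite{CW}, proved there by direct methods under \eqref{eq-m0} alone; the route you propose (and the paper takes) recovers it --- indeed in a sharper, spectral-type-resolved form --- but only under \eqref{eq-m1}, and the paper is explicit about this restriction in its corollaries. So your proposal, once the missing analytic core is supplied along the lines of Sections~\ref{sec3}--\ref{sec4}, proves a strictly smaller statement than the one quoted; to get the full claim under \eqref{eq-m0} you would have to either remove \eqref{eq-m1} from the eigenfunction-expansion argument (open, as the paper notes) or fall back on the original proof of \cite{CW}. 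Two smaller points: the boundedness of the d'Alembert operators themselves on $\ELL^2(X^1,m^1)$ is not automatic and is needed before any dense-core argument (Lemma~\ref{lem14}); and the kernel decomposition at the Dirichlet values, including convergence of the vertex sums for non-locally-finite graphs and the full tree/transience dichotomy, is substantial --- the paper cites \cite{Cat} for it rather than reproving it, so your one-sentence energy/telescoping claim is carrying real weight there.
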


The aim of the present paper is to improve the above results concerning the relations between the three operators
and to establish a link between the operator $A$ and the wave equation
associated with $L$. This link was made possible using a d'Alembert-type
formula for the solutions of the wave equations on finite graphs we found in the papers \cite{kop1,kop2}.
These works seem to be not very known and hardly available, so let us briefly repeat here the main points.
Assume for the moment that the graph $X$ is finite.
For a function $F$ on $X^1$ define an extension 
$\RR\ni t\mapsto \Tilde F(xy,t)\in \CC$ of each component $F_{xy}$
 using the following recursion: 
\begin{align}
   \label{eq-tilde}
 \text{if }t&\in [0,1],& \Tilde F(xy,t)&:=F(xy,t),\\
 \text{if }t&>1, &\Tilde F(xy,t)&:= \dfrac{2}{m^0(y)} \sum_{v:v\sim y} c(yv) \Tilde F(yv,t-1) -\Tilde F(yx,t-1), \label{eq-tt1}\\[\bigskipamount]
 \text{if }t&<0, &\Tilde F(xy,t)&:= \dfrac{2}{m^0(x)} \sum_{u:u\sim x} c(ux) \Tilde F(ux,t+1) -\Tilde F(yx,t+1).  \label{eq-tt2}
\end{align}
It is easy to check that $\Tilde F(xy,t)= \Tilde F(yx,1-t)$ for all $x,y\in X^0$ with $x\sim y$ and all $t\in\RR$.
For $\tau\in\RR$ define a function $C(\tau)F$ on $X^1$ by
\begin{equation}
     \label{eq-ctau}
C(\tau)F (xy,t)=\dfrac{\Tilde F(xy,t+\tau)+\Tilde F(xy,t-\tau)}{2}.
\end{equation}
The operators $F\mapsto C(\tau) F$ will be called \emph{d'Alembert operators}.
The following result is proved in \cite[Section 1.4]{kop2}:
\begin{prop}\label{prop3a}
Let $X$ be finite and let $G\in \dom L$ with $G_{xy}\in C^2\big([0,1]\big)$,
then the function $F(\tau):=C(\tau)G$ solves the wave equation for
$L$ with the initial state $G$ and the zero initial velocity.
More precisely, for any $\tau\in\RR$ one has $F(\tau)\in\dom L$ with $F(\tau)_{xy}\in C^2\big([0,1]\big)$,
and for any $(xy,t)\in X^1$ we have
\begin{gather*}
\dfrac{d^2F(\tau;xy,t)}{d\tau^2}=\dfrac{d^2F(\tau;xy,t)}{dt^2},\\[\medskipamount]
F(0;xy,t)=G(xy,t), \quad 
\dfrac{dF(\tau;xy,t)}{d\tau}\Big|_{\tau=0}=0.
\end{gather*}
\end{prop}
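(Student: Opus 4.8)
The plan is to reduce everything to the classical one-dimensional d'Alembert formula applied edgewise to the extension $\Tilde G$, isolating the two genuinely graph-theoretic ingredients: the smoothness of $\Tilde G$ across the integer gluing points and the membership $C(\tau)G\in\dom L$. Indeed, once the component $\Tilde G_{xy}$ is known to be twice continuously differentiable on all of $\RR$, formula \eqref{eq-ctau} makes the wave equation and the initial data automatic: both $\partial_\tau^2$ and $\partial_t^2$ of $C(\tau)G(xy,t)$ equal $\tfrac12\big(\Tilde G''(xy,t+\tau)+\Tilde G''(xy,t-\tau)\big)$, while $C(0)G(xy,t)=\Tilde G(xy,t)=G(xy,t)$ on $[0,1]$ and $\partial_\tau C(\tau)G(xy,t)|_{\tau=0}=\tfrac12\big(\Tilde G'(xy,t)-\Tilde G'(xy,t)\big)=0$. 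Since $X$ is finite, the recursion \eqref{eq-tt1}--\eqref{eq-tt2} defines $\Tilde G$ unambiguously on $\RR$, and it inherits the reflection symmetry $\Tilde G(xy,t)=\Tilde G(yx,1-t)$, so it suffices to analyse a single junction, say $t=1$ on the edge $[x,y]$.

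The first step is the regularity of $\Tilde G$ at that junction, and this is exactly where the vertex conditions \eqref{eq-fcont}--\eqref{eq-dder} enter. Matching of values follows from continuity \eqref{eq-fcont} together with $\sum_{v\sim y}c(yv)=m^0(y)$: the right limit of \eqref{eq-tt1} is $\tfrac{2}{m^0(y)}\sum_v c(yv)G(y)-G(y)=G(y)$, equal to the left value $G(xy,1)=G(y)$. Matching of first derivatives is the point at which the Kirchhoff condition \eqref{eq-dder} is used: the right derivative is $\tfrac{2}{m^0(y)}\sum_v c(yv)G'(yv,0+)-G'(yx,0+)=-G'(yx,0+)$ because the sum vanishes, and this agrees with the left derivative $G'(xy,1-)=-G'(yx,0+)$. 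Hence $\Tilde G$ is automatically $C^1$ across every integer. The same computation for the second derivatives is where I expect the main obstacle: the right value $\tfrac{2}{m^0(y)}\sum_v c(yv)G''(yv,0+)-G''(yx,0+)$ equals the left value $G''(xy,1-)=G''(yx,0+)$ only if $G''(yx,0+)=\tfrac1{m^0(y)}\sum_{v\sim y}c(yv)G''(yv,0+)$ for every $x\sim y$, i.e. only if the edgewise second derivatives agree across each vertex (equivalently, $LG$ is continuous there). This compatibility is the obstruction to global $C^2$-smoothness of $\Tilde G$; handling it carefully --- either by building it into the admissible class of $G$, or by reading the conclusion in the weaker sense that each $F(\tau)$ is $C^1$ and lies in $\Hat H^2(X^1,m^1)$, with the wave equation holding off the characteristic set $\{t\pm\tau\in\ZZ\}$ --- is the delicate part of the argument.

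It remains to verify $C(\tau)G\in\dom L$, which I would do by inserting \eqref{eq-ctau} into \eqref{eq-fcont} and \eqref{eq-dder}. For $|\tau|<1$ one rewrites $\Tilde G(xy,-\tau)$ through the reflection \eqref{eq-tt2} and the symmetry of $G$ to obtain $C(\tau)G(xy,0)=\tfrac1{m^0(x)}\sum_{u\sim x}c(xu)G(xu,\tau)$, which is independent of $y$, so \eqref{eq-fcont} holds; the analogous computation of the inward derivatives gives $\partial_t C(\tau)G(xy,0+)=G'(xy,\tau)-\tfrac1{m^0(x)}\sum_u c(xu)G'(xu,\tau)$, whence $\sum_{y\sim x}c(xy)\,\partial_t C(\tau)G(xy,0+)=0$ by $\sum_y c(xy)=m^0(x)$, which is \eqref{eq-dder}. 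Passing from $|\tau|<1$ to arbitrary $\tau\in\RR$ is then an induction on $\lceil|\tau|\rceil$, the recursion \eqref{eq-tt1}--\eqref{eq-tt2} being constructed precisely so as to transport these two identities forward by one unit of time.
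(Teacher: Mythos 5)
First, a point of reference: the paper contains no proof of Proposition~\ref{prop3a} at all --- it is quoted as a known result from Section~1.4 of the thesis \cite{kop2}, which the authors themselves describe as hardly available. So your attempt can only be judged on its own merits, and on those merits it is incomplete exactly where the proposition has its content. Your junction computations are correct as far as they go: the value matching at integers via \eqref{eq-fcont}, the first-derivative matching via the Kirchhoff condition \eqref{eq-dder}, and the verification of \eqref{eq-fcont}--\eqref{eq-dder} for $C(\tau)G$ when $|\tau|<1$ all check out. But at the decisive step --- $C^2$-matching of $\Tilde G$ across the integers --- you name the obstruction and then offer a choice between ``building it into the admissible class of $G$'' and ``reading the conclusion in a weaker sense,'' carrying out neither. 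Since the proposition asserts precisely that $F(\tau)_{xy}\in C^2\big([0,1]\big)$ and that the wave equation holds at \emph{every} $(xy,t)$, leaving this step open is a genuine gap, not a loose end.

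Your instinct that the obstruction is real deserves to be made concrete, because it shows the statement as reproduced here cannot be proved without an extra hypothesis (presumably implicit in \cite{kop2}). Take the star with center $y$, leaves $x_1,x_2,x_3$, all conductances $1$, and
\[
G(yx_i,s)=a_i\,s^2\chi(s),\qquad \chi\in C^\infty,\ \chi\equiv 1 \text{ near } 0,\ \chi\equiv 0 \text{ on } \big[\tfrac38,1\big].
\]
Then $G\in\dom L$ with $C^\infty$ edge components, while \eqref{eq-tt1} gives $\Tilde G(x_1y,t)=\big(\tfrac23(a_1+a_2+a_3)-a_1\big)(t-1)^2\chi(t-1)$ for $t\in[1,2]$, so $\Tilde G''$ jumps at $t=1$ whenever $a_2+a_3\ne 2a_1$. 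With $a_1=0$, $a_2=a_3=1$ and $\tau=\tfrac12$ one finds, near $t=\tfrac12$, that $F(\tau;x_1y,t)$ equals $0$ for $t\le\tfrac12$ and $\tfrac23(t-\tfrac12)^2$ for $t\ge\tfrac12$: it is not $C^2$, and the second derivatives in the proposition do not exist at $(x_1y,\tfrac12)$. A complete treatment must therefore add the compatibility condition that the edgewise second derivatives agree at each vertex (i.e.\ $G''$ satisfies \eqref{eq-fcont}, equivalently $LG$ is continuous); under that hypothesis your matching computation does close the $C^2$ step, and the extension to all integers is then automatic, because \eqref{eq-tt1} (resp.\ \eqref{eq-tt2}) is a single $t$-independent formula valid on all of $(1,\infty)$ (resp.\ $(-\infty,0)$), so regularity at $t=n+1$ reduces recursively to regularity of the neighboring edge functions at $t=n$ and finally to the junctions $t\in\{0,1\}$ you analysed. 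Finally, your restriction to $|\tau|<1$ and the concluding induction on $\lceil|\tau|\rceil$, which you only assert, are unnecessary: running your two identities with $\Tilde G$ in place of $G$ (using \eqref{eq-tt2} together with the symmetry $\Tilde G(xy,t)=\Tilde G(yx,1-t)$) verifies \eqref{eq-fcont} and \eqref{eq-dder} for $C(\tau)G$ for every $\tau$ in one stroke.
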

It is one of our objectives to extend this construction to the $\ELL^2$ setting
on infinite networks (not necessarily localy finite),
and we will see that the operator $A$ is closely related to the d'Alembert operators.

Let us describe the main results of the paper.  In section \ref{sec-lp}
we improve the result of Proposition~\ref{prop1}.
Let $\cB(\RR)$ denote the set of the Borel subsets of $\RR$.
For a self-adjoint operator $T$ acting in a separable Hilbert space $\cH$ we denote
by $E_T:\cB(\RR)\to \ELL(\cH)$ the operator-valued spectral measure associated with $T$,
i.e. for any $\Omega\in \cB(\RR)$ we have $E_T(\Omega):=1_\Omega(T)$, where
$1_\Omega:\RR\to\RR$ is the indicator function of $\Omega$. For the same $\Omega$,
we denote by $T_\Omega$ the operator $T E_T(\Omega)$ viewed as a self-adjoint operator
in the Hilbert space $\ran E_T(\Omega)$ equipped with the induced scalar product.
We will use frequently the following functions:
\[
\Phi(z,t):=\begin{cases}
t, & z=0,\\
\dfrac{\sin(z t)}{z}, & z\ne 0,
\end{cases}
\quad\text{and}\quad
\Phi(z):=\Phi(z,1)=\begin{cases}
1, & z=0,\\
\dfrac{\sin z}{z}, & z\ne 0.
\end{cases}
\]

Our result on the relation between the operators $L$ and $P$ is as follows.

\begin{theorem}\label{th1}
The following assertions hold true.
\begin{itemize}
\item[(a)] The operator $L$ is self-adjoint and non-negative.

\item[(b)] Let $n\in \NN\cup\{0\}$. Denote
\begin{align*}
J&:=[0,\pi^2), & I&:=(-1,1],& \text{ if } n=0,\\
J&:=\big(\pi^2 n^2,\pi^2(n+1)^2\big),& I&:=(-1,1),& \text{ if } n\ge 1,
\end{align*}
and introduce a function $\kappa:[-1,1]\to\overline{\mathstrut J}$ by
\[
\kappa(t)=\begin{cases}
(\pi n+\arccos t)^2, & n \text { is even},\\
\big(\pi(n+1)-\arccos t\big)^2, & n \text { is odd};
\end{cases}
\]
in other words, the function $\kappa$ is the inverse of the function $J\ni \lambda\mapsto \cos\sqrt\lambda\in I$.
Then the operator $L_J$ is unitarily equivalent to the operator $\kappa(P_I)$.

\item[(c)] Moreover, one has the representation
$L_J=U_J \kappa(P_I) U_J^*$, where $U_J:\ran E_P\big(I)\to\ran E_L(J)$ is the unitary operator given by
\[
U_J:=\sqrt 2\int_J \gamma(\lambda)\, d E_P(\cos\sqrt \lambda)
\]
with the operators $\gamma(z):\ell^2(X^0,m^0)\to\ELL^2(X^1,m^1)$ given by
\[
\big(\gamma(z) h\big)(xy,t)=
h(x)\dfrac{\Phi(\sqrt z,1-t)}{\Phi(\sqrt z,1)}+h(y)\dfrac{\Phi(\sqrt z,t)}{\Phi(\sqrt z,1)},
\]
and the adjoint $U^*_J:\ran E_L\big(J)\to\ran E_P(I)$ acts as
\[
U^*_J:=\sqrt 2\int_J d E_P(\cos\sqrt \lambda)\, \gamma(\lambda)^*;
\]
here the both integrals are understood as improper Riemann-Stieltjes integrals.
\item[(d)] In particular, let $\star\in\{\mathrm{p},\mathrm{pp},\mathrm{disc},\mathrm{ess},\mathrm{ac},\mathrm{sc}\}$
and $\lambda\notin\Sigma:=\big\{(\pi n)^2:n\in\NN \big\}$, then the condition
$\lambda\in \spec_\star L$ is equivalent to the condition $\cos\sqrt\lambda\in \spec_\star P$,
and there holds $\ker(L-\lambda)=\gamma(\lambda)\ker (P-\cos\sqrt\lambda)$.
\end{itemize}
\end{theorem}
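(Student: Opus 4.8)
The plan is to derive part~(a) from quadratic forms, to read off all spectral information from the explicit solution operators $\gamma(z)$, and then to build $U_J$ by integrating these operators against the spectral measure of $P$. For~(a) I would introduce the non-negative form $q[F]:=\|F'\|^2_{\ELL^2(X^1,m^1)}$ with domain the set of $F\in\Hat H^1(X^1,m^1)$ that are continuous at the vertices in the sense of~\eqref{eq-fcont}. Since each edge component lies in $H^1(0,1)\hookrightarrow C[0,1]$, the vertex traces $F(x)$ are well defined, and edgewise convergence of traces shows that $q$ is closed; crucially this uses only~\eqref{eq-m0} and never local finiteness. The operator associated with $q$ by the first representation theorem is non-negative and self-adjoint, and an integration by parts identifies its domain as $\Hat H^2(X^1,m^1)$ subject to~\eqref{eq-fcont}--\eqref{eq-dder}, so it equals $L$.

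The core is the analysis of $\gamma(z)$. A direct computation shows that each component $(\gamma(z)h)_{xy}$ solves $-F''=zF$ on $(0,1)$, is continuous at the vertices with value $h(x)$ at $x$, and satisfies
\[
F'(x)=\frac{m^0(x)}{\Phi(\sqrt z)}\big(Ph-\cos\sqrt z\,h\big)(x).
\]
Hence, for $z\notin\Sigma$, the function $\gamma(z)h$ obeys~\eqref{eq-dder} exactly when $Ph=\cos\sqrt z\,h$; this already yields the relation $\ker(L-\lambda)=\gamma(\lambda)\ker(P-\cos\sqrt\lambda)$ of~(d), once $\gamma(\lambda)$ is seen to be injective and every eigenfunction of $L$ with $\lambda\notin\Sigma$ is recovered from its vertex data. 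Evaluating the edge integrals $\int_0^1\sin^2(\sqrt z\,t)\,dt$ and $\int_0^1\sin(\sqrt z\,t)\sin(\sqrt z(1-t))\,dt$, summing with the weights $c(xy)$, and using the symmetry $t\mapsto1-t$ gives the bilinear identity $\langle\gamma(z)g,\gamma(z')g'\rangle=a(z,z')\langle g,g'\rangle+b(z,z')\langle g,Pg'\rangle$, whose diagonal reads $\|\gamma(z)h\|^2_{\ELL^2(X^1,m^1)}=a(z)\|h\|^2+b(z)\langle Ph,h\rangle$; the decisive elementary identity is $a(z)+b(z)\cos\sqrt z=\tfrac12$.

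Next I would define $U_J$ by the improper Riemann--Stieltjes integral in the statement and prove it is an isometry intertwining $L$ and $\kappa(P_I)$. For a finite partition $\{t_k\}$ of $I$ with sample points $\xi_k\in(t_{k-1},t_k]$ and $\Delta_k:=E_P\big((t_{k-1},t_k]\big)$, the bilinear identity shows that the cross terms of the Riemann sum carry the factors $\langle\Delta_k h,\Delta_l h\rangle$ and $\langle\Delta_k h,P\Delta_l h\rangle$, both of which vanish for $k\ne l$ because the spectral projections of $P$ are orthogonal and commute with $P$. The diagonal terms equal $a(\kappa(\xi_k))\|\Delta_k h\|^2+b(\kappa(\xi_k))\langle\Delta_k h,P\Delta_k h\rangle$; since $\langle\Delta_k h,P\Delta_k h\rangle$ lies between $t_{k-1}\|\Delta_k h\|^2$ and $t_k\|\Delta_k h\|^2$, and $\cos\sqrt{\kappa(t)}=t$, refining the partition and invoking $a+b\cos\sqrt{\kappa}=\tfrac12$ collapses the sum to $\tfrac12\|h\|^2$, so that $\|U_J h\|=\|h\|$ after the factor $\sqrt2$. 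The intertwining follows because on the spectral increments $dE_P(\cos\sqrt\lambda)$ one has $\kappa(P_I)=\kappa(\cos\sqrt\lambda)=\lambda$ while $\gamma(\lambda)$ produces $\lambda$-eigenfunctions of $L$; this gives~(b) and~(c), with the stated form of $U_J^*$ obtained by the same manipulation of the adjoint integral.

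The hard part will be two intertwined points. First, the integral is only improper: as $\lambda$ tends to the band edges of $J$ the prefactor $1/\Phi(\sqrt\lambda)$ blows up and $b(z)$ diverges, so I must show that the error $b(\kappa(\xi_k))\big(\langle\Delta_k h,P\Delta_k h\rangle-\xi_k\|\Delta_k h\|^2\big)$ is controlled by refining the partition near $\pm1$ and exploiting that the finite measure $\langle E_P(\cdot)h,h\rangle$ has small tails there. Second, and most delicate, is the surjectivity of $U_J$ onto $\ran E_L(J)$: one must verify that $L$ has no spectrum in $J$ beyond what the $P$-correspondence produces, equivalently that the Dirichlet part of $\ELL^2(X^1,m^1)$ carries spectrum exactly $\Sigma$. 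I would obtain this either by computing $U_J U_J^*=E_L(J)$ from the explicit form of $U_J^*$, or via a Krein-type resolvent formula writing $(L-z)^{-1}$ through the decoupled Dirichlet resolvent plus a correction governed by $\cos\sqrt z-P$, invertible precisely when $\cos\sqrt z\notin\spec P$. Granted~(b)--(c), part~(d) follows since $\kappa$ and its inverse $\lambda\mapsto\cos\sqrt\lambda$ are mutually inverse diffeomorphisms between the interiors of $I$ and $J$ and therefore transport each spectral type $\star$ across the unitary equivalence; letting $n$ range over all bands exhausts $\RR\setminus\Sigma$.
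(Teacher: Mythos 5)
Your computational core is sound, and your route is genuinely different from the paper's: the paper never builds $U_J$ by hand. Instead it verifies that $\big(\ell^2(X^0,m^0),\Gamma,\Gamma'\big)$ with $\Gamma F=F(\cdot)$, $\Gamma'F=F'(\cdot)/m^0(\cdot)$ is a boundary triple for the operator $S=-d^2/dt^2$ subject only to the continuity condition \eqref{eq-fcont} (Lemmas~\ref{lem4} and~\ref{lem5}, which is where all the non-locally-finite weighted estimates live), computes the Krein $\gamma$-field --- the same $\gamma(z)$ as yours --- and the Weyl function $M(z)=\big(P-\cos\sqrt z\big)/\Phi(\sqrt z)$, and then obtains (a)--(d) in one stroke from the cited abstract Theorem~\ref{thmkp} of \cite{KP11,KP12,BGP08}. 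Within your approach, the decisive identity $a(z)+b(z)\cos\sqrt z=\tfrac12$ is correct (with $\omega=\sqrt z$ one finds $a=\big(\tfrac12-\tfrac{\sin\omega\cos\omega}{2\omega}\big)/\sin^2\omega$ and $b=\big(\tfrac{\sin\omega}{\omega}-\cos\omega\big)/(2\sin^2\omega)$), the bilinear identity does hold by the $t\mapsto 1-t$ symmetry, the cross terms do vanish, and your form-theoretic proof of (a) parallels the paper's Lemma~\ref{lem6}.

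The genuine gap is exactly the point you flag as ``most delicate'': surjectivity of $U_J$ onto $\ran E_L(J)$. Your Riemann-sum argument can only deliver that $U_J$ is an isometry of $\ran E_P(I)$ onto \emph{some} closed subspace intertwining $\kappa(P_I)$ with $L$, hence one-sided spectral inclusions; and even that intertwining needs more than ``$\gamma(\lambda)$ produces $\lambda$-eigenfunctions'', because the Riemann sums satisfy \eqref{eq-fcont} but \emph{not} the Kirchhoff condition \eqref{eq-dder}: indeed $\Gamma'\gamma(z)h=\big(P-\cos\sqrt z\big)h/\Phi(\sqrt z)\neq 0$ unless $h$ is an exact eigenvector, so the sums do not lie in $\dom L$, and to place the limit in $\dom L$ one needs a closed extension of $L$ (the paper's $S$) together with graph-norm continuity of $\Gamma'$ --- objects your proposal never introduces. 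Of your two proposed fixes for surjectivity, the first, ``computing $U_JU_J^*=E_L(J)$'', is circular: the determination of $E_L$ on $J$ is precisely what is at stake, so there is nothing explicit to compute against. The second, a Krein-type resolvent formula $(L-z)^{-1}=(H^0-z)^{-1}-\gamma(z)M(z)^{-1}\gamma(\bar z)^*$, is indeed the correct tool, but establishing it requires the full boundary-triple structure the paper sets up --- the Green identity \eqref{eq-green}, surjectivity of $(\Gamma,\Gamma')$ (both resting on the weighted Sobolev/Cauchy--Schwarz estimates of Lemma~\ref{lem5} valid under \eqref{eq-m0} without local finiteness), self-adjointness of the Dirichlet operator $H^0$, and holomorphy of $z\mapsto\gamma(z)$ --- none of which appears in your proposal; with all of that in hand one is essentially running the paper's proof and may as well invoke Theorem~\ref{thmkp}. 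As written, therefore, parts (b)--(c) are not established, and with them the non-point spectral types in (d) fall as well (only the kernel identity $\ker(L-\lambda)=\gamma(\lambda)\ker(P-\cos\sqrt\lambda)$ admits the direct elementary argument you sketch).
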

We refer to the paper~\cite{KP12} for the exact definition of the operator-valued
integrals appearing in item~(c).
The proof of theprem~\ref{th1} is given in subsection \ref{ss-spl} and is based on a certain result on the spectra of self-adjoint extensions
obtained in \cite{KP11,KP12}; the abstract machinery is reviewed in subsection \ref{ss-bt}.
In Section \ref{sec3} we establish a Fourier-type expansion associated with $L$;
at this stage we have to impose the condition \eqref{eq-m1}. 
In Section~\ref{sec4} we switch to the study of the operators $C(\tau)$ given by \eqref{eq-ctau}.
We establish the following relation:

\begin{theorem}\label{th4}
Assume that the network $(X,c)$ satisfies \eqref{eq-m1}, then 
for any $\tau\in\RR$ the d'Alembert operator $C(\tau)$ and the continuous Laplacian
$L$ are related by $C(\tau)=\cos(\tau\sqrt L)$.
\end{theorem}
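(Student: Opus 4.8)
The plan is to identify $\{C(\tau)\}_{\tau\in\RR}$ with the cosine family generated by $-L$ via the spectral representation of $L$ supplied by Theorem~\ref{th1} and the Fourier-type expansion of Section~\ref{sec3}. Since $L$ is self-adjoint and non-negative (Theorem~\ref{th1}(a)), the operator $\cos(\tau\sqrt L)$ is well defined by the functional calculus, and it suffices to show that $C(\tau)$ and $\cos(\tau\sqrt L)$ act identically on the (generalized) eigenfunctions that span $\ELL^2(X^1,m^1)$, and then to invoke boundedness and density.

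The computational heart is the behaviour of the extension $\Tilde F$ for $F=\gamma(\lambda)h$. Writing $\omega=\sqrt\lambda$ and using $Ph=\cos\omega\,h$, I would show that the recursion \eqref{eq-tt1}--\eqref{eq-tt2} is solved, for all $t\in\RR$, by the purely sinusoidal prolongation
\[
\Tilde F(xy,t)=h(x)\,\frac{\Phi(\omega,1-t)}{\Phi(\omega,1)}+h(y)\,\frac{\Phi(\omega,t)}{\Phi(\omega,1)},
\]
that is, by the very formula defining $\gamma(\lambda)h$ on $[0,1]$ but now with $t$ unrestricted. This is a direct substitution: inserting the ansatz into the right-hand side of \eqref{eq-tt1} and collecting the coefficients of $h(x)$ and $h(y)$, the $h(x)$-coefficient matches because $\Phi(\omega,\cdot)$ is odd, while the $h(y)$-coefficient collapses to $\Phi(\omega,t)/\Phi(\omega,1)$ precisely because $\sum_{v:v\sim y}c(yv)h(v)=m^0(y)\cos\omega\,h(y)$, i.e.\ the eigenvalue relation for $P$ together with $\sin(\omega(2-t))+2\cos\omega\,\sin(\omega(t-1))=\sin(\omega t)$. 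Feeding this prolongation into \eqref{eq-ctau} and applying a product-to-sum identity yields
\[
C(\tau)\gamma(\lambda)h=\cos(\omega\tau)\,\gamma(\lambda)h=\cos(\tau\sqrt\lambda)\,\gamma(\lambda)h,
\]
which is exactly the action of $\cos(\tau\sqrt L)$ on such an eigenfunction. The Dirichlet values $\lambda\in\Sigma$, where $\gamma$ degenerates, are handled separately: the relevant eigenfunctions satisfy $F(x)=0$ at every vertex and $\sum_{y:y\sim x}c(xy)F'(xy,0+)=0$, and the same check shows the recursion prolongs them $2$-periodically in $t$ without any leakage to adjacent edges, again giving $C(\tau)F=\cos(\pi n\tau)F$.

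With the action on the spectral building blocks settled, I would promote it to the whole space using the decomposition $\ELL^2(X^1,m^1)=\ran E_L(\Sigma)\oplus\bigoplus_{n\ge0}\ran E_L(J_n)$ underlying Theorem~\ref{th1}(b)--(c) and the expansion of Section~\ref{sec3}. For $F$ in the dense subspace cut out by bounded spectral intervals, that expansion represents $F$ as a spectral integral of the $\gamma(\lambda)h$, and $\cos(\tau\sqrt L)$ is multiplication by $\cos(\tau\sqrt\lambda)$ under the same integral; moving $C(\tau)$ inside the integral and invoking the eigenfunction identity above gives $C(\tau)F=\cos(\tau\sqrt L)F$ there, whence equality on all of $\ELL^2(X^1,m^1)$ by continuity.

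I expect the genuine obstacle to be this final transition: making $C(\tau)$ a bona fide \emph{bounded} operator on $\ELL^2(X^1,m^1)$ and justifying the interchange with the spectral integral. For graphs that need not be locally finite, the sums in \eqref{eq-tt1}--\eqref{eq-tt2} run over possibly infinitely many neighbours, so both the convergence of the prolongation $\Tilde F$ and the $\ELL^2$-boundedness of $C(\tau)$ are nontrivial; this is exactly where the stronger hypothesis \eqref{eq-m1}, $\sum_{y:y\sim x}\sqrt{c(xy)}<\infty$, is needed, as it controls the propagation of $\ELL^2$-mass across vertices of infinite degree and ensures the convergence of the Fourier machinery of Section~\ref{sec3}. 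Once this uniform control is in place and $C(\tau)$ is known to be bounded, the identification $C(\tau)=\cos(\tau\sqrt L)$ follows from the pointwise computation by density.
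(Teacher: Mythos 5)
Your trigonometric computation is sound---the sinusoidal prolongation does solve the recursion \eqref{eq-tt1}--\eqref{eq-tt2} whenever the vertex data satisfy the eigenvalue relation, and this cancellation is indeed the mechanism behind the theorem---but the final ``promotion'' step contains a genuine gap which your proposal acknowledges and then waves through rather than closes. The spectral building blocks are not vectors $\gamma(\lambda)h$ with $h\in\ker(P-\cos\sqrt\lambda)$: for $\lambda$ in the continuous spectrum these kernels are typically trivial, and the objects that actually enter the expansion of Section~\ref{sec3} are the generalized eigenfunctions $\Psi_{j,\lambda}$ of Theorem~\ref{th3}, which live in $\cH_-$ and not in $\ELL^2(X^1,m^1)$. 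Consequently ``moving $C(\tau)$ inside the spectral integral'' is not licensed by the $\ELL^2$-boundedness of $C(\tau)$ (Lemma~\ref{lem14}); you would need to define $C(\tau)$ on $\cH_-$ and prove continuity there, or set up a duality argument on the pairing $\cH_+\times\cH_-$, and neither is supplied. The paper closes exactly this gap by dualizing in the opposite direction: it never applies $C(\tau)$ to an eigenfunction. It takes $F$ supported on a single edge with $\supp\varphi\subset\big(0,\frac{1}{2}\big)$, restricts to $\tau\in\big[0,\frac{1}{2}\big]$ so that $C(\tau)F$ propagates only one step and can be written down explicitly---general $\tau$ is then recovered from the functional equations \eqref{eq-cc2} and \eqref{eq-lem20}, a reduction absent from your plan but indispensable on this route---verifies $C(\tau)F\in\cH_+$ using the weight $w$ of \eqref{eq-ww}, and computes the pairing $\langle C(\tau)F,\Psi_{j,\lambda}\rangle$ directly, where the structure \eqref{eq-fbc}, in particular $\sum_{u:u\sim x}c(xu)A(u)=0$ together with $\sum_{u:u\sim x}c(xu)\big|A(u)\big|<\infty$, produces the multiplier $\cos(\tau\sqrt\lambda)$ at the level of $U_j$; equality $C(\tau)=\cos(\tau\sqrt L)$ then follows by density and boundedness. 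Your pointwise identity on eigenfunctions, correct as algebra, never gets converted into a statement about the operator on $\ELL^2$.

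A secondary inaccuracy: you locate the need for \eqref{eq-m1} in the $\ELL^2$-boundedness of $C(\tau)$ and the convergence of the prolongation, but Lemma~\ref{lem14} establishes that boundedness under the standing assumption \eqref{eq-m0} alone, via Cauchy--Schwarz across each vertex. Where \eqref{eq-m1} genuinely enters is the regularity of the Fourier-type machinery: the construction of the weight $w$ in \eqref{eq-ww} (through the summability of $\alpha(x)=\sum_{y:y\sim x}\sqrt{c(xy)}$), the structure and local summability of the coefficients $a(xy)$ in Theorem~\ref{th3}, and the verification that $C(\tau)F$ and the test functions lie in $\cH_+$---that is, precisely the ingredients of the pairing argument you would need in order to repair the promotion step.
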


This relation is finally used to obtain the following representation for the operator $A$.
\begin{theorem}\label{th6}
Assume that the network $(X,c)$ satisfies the condition~\eqref{eq-m1}, then the averaging operator $A$ and the continuous Laplacian $L$ are related by
$A=\Phi(\sqrt L)$. 
\end{theorem}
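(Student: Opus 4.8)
The plan is to realize $A$ as the average over one period of the d'Alembert cosine family and then to invoke Theorem~\ref{th4}. The starting point is the elementary scalar identity $\Phi(z)=\int_0^1\cos(\tau z)\,d\tau$, valid for every $z\ge 0$ (and at $z=0$ both sides equal $1$). Since $\tau\mapsto\cos(\tau\sqrt L)$ is a uniformly bounded, strongly continuous family of self-adjoint operators, the map $\tau\mapsto C(\tau)F=\cos(\tau\sqrt L)F$ is continuous from $[0,1]$ into $\ELL^2(X^1,m^1)$ for each fixed $F$, so the Riemann integral $\int_0^1 C(\tau)F\,d\tau$ exists in $\ELL^2(X^1,m^1)$. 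Writing the scalar spectral measures $\mu_F=\langle F,E_{\sqrt L}(\cdot)F\rangle$ and combining Fubini with the spectral theorem gives $\big\langle F,\int_0^1 C(\tau)F\,d\tau\big\rangle=\int\Phi(\lambda)\,d\mu_F(\lambda)=\langle F,\Phi(\sqrt L)F\rangle$, so that $\Phi(\sqrt L)=\int_0^1 C(\tau)\,d\tau$. The theorem thus reduces to the operator identity $A=\int_0^1 C(\tau)\,d\tau$.

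To prove this identity I would work on the dense subspace of functions $F$ supported on finitely many edges with each $F_{xy}\in C^2\big([0,1]\big)$; for such $F$ the extension $\Tilde F$ is obtained by finitely many applications of the recursion \eqref{eq-tilde}--\eqref{eq-tt2}, all sums involved are finite, the pointwise formula \eqref{eq-ctau} applies, and by Theorem~\ref{th4} it coincides with $\cos(\tau\sqrt L)F$. The change of variables $s=t\pm\tau$ turns the $\tau$-average of the two shifted copies into a single integral,
\[
\int_0^1 C(\tau)F(xy,t)\,d\tau=\frac12\int_{t-1}^{t+1}\Tilde F(xy,s)\,ds,\qquad t\in[0,1].
\]
I would then split this as $\int_{t-1}^{0}+\int_0^1+\int_1^{t+1}$. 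On $[0,1]$ one has $\Tilde F=F$; on $[1,t+1]$ the argument $s-1$ lies in $[0,t]$, so \eqref{eq-tt1} applies and yields the sum over $v\sim y$ together with $-F(yx,s-1)$; symmetrically, on $[t-1,0]$ the argument $s+1$ lies in $[t,1]$, so \eqref{eq-tt2} applies and yields the sum over $u\sim x$ together with $-F(yx,s+1)$.

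After the substitutions $F(ux,\cdot)=F(xu,1-\cdot)$ and $F(yx,\cdot)=F(xy,1-\cdot)$, the two sums reproduce precisely the two terms defining $AF(xy,t)$, the factor $2$ cancelling against the $\frac12$ in front. The key point is that the three ``local'' contributions not belonging to the sums, namely $-\int_0^{1-t}F(xy,p)\,dp$, $+\int_0^1 F(xy,s)\,ds$ and $-\int_0^{t}F(yx,r)\,dr=-\int_{1-t}^1 F(xy,p)\,dp$, cancel exactly, leaving $\frac12\int_{t-1}^{t+1}\Tilde F(xy,s)\,ds=AF(xy,t)$. This establishes $A=\int_0^1 C(\tau)\,d\tau$ on the core, and since both operators are bounded the identity extends to all of $\ELL^2(X^1,m^1)$; together with the first paragraph this gives $A=\Phi(\sqrt L)$.

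The main obstacle, I expect, is not the computation but making the two integral representations rigorous \emph{simultaneously} in the non-locally-finite setting: one must ensure that on the chosen core the pointwise formula \eqref{eq-ctau} genuinely agrees with $\cos(\tau\sqrt L)F$ (this is exactly where Theorem~\ref{th4}, and hence condition~\eqref{eq-m1}, enters), and one must justify both the Fubini interchange for the operator-valued integral and the passage from the dense core to all of $\ELL^2(X^1,m^1)$ by boundedness. The careful bookkeeping in the change of variables, the correct application of the recursion on each of the three subintervals, and the clean cancellation of the local terms are the delicate but essentially routine ingredients.
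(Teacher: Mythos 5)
Your proposal is correct and follows essentially the same route as the paper: the paper likewise reduces the claim to $\Phi(\sqrt L)F=\int_0^1 C(\tau)F\,d\tau$ (invoking the abstract wave-equation identity \eqref{eq-sint} on the core $\dom\sqrt L$ rather than your spectral-measure/Fubini argument) and then evaluates this integral pointwise via the recursion \eqref{eq-tt1}--\eqref{eq-tt2}, with exactly the same cancellation of the local terms against $\int_0^1 F(xy,s)\,ds$. The only cosmetic difference is that your change of variables $s=t\pm\tau$ handles all $t\in[0,1]$ uniformly, whereas the paper fixes $t\in[0,\tfrac12]$, splits the $\tau$-integral at $\tau=t$ and $\tau=1-t$ (the same breakpoints in the other variable), and then treats $t\in[\tfrac12,1]$ by the symmetry $G(xy,t)=G(yx,1-t)$.
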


In other words, we have a very simple interpretation of the operator $A$:
if $F\in\dom\sqrt L$, then $AF:=G(1)$, where $G$ is the solution
of the wave equation $G''+L G=0$ with $G(0)=0$ and $G'(0)=F$ (see subsection~\ref{swave}
for a more detailed discussion of the abstract wave equation).
As a consequence of Theorem~\ref{th6} and of the functional calculus of self-adjoint operators one obtains
\begin{corol}\label{corol1}
Assume that the condition~\eqref{eq-m1} holds, then for any index $\star\in\{\mathrm{p},\mathrm{pp},\mathrm{disc},\mathrm{ess},\mathrm{ac},\mathrm{sc}\}$ there holds
\[
\spec_\star A=\Big\{
\Phi(\sqrt\lambda): \lambda\in \spec_\star L
\Big\},
\]
and for any $\lambda\in\spec_\mathrm{p} L$ one has
\[
\ker \Big(A-\Phi(\sqrt\lambda)\Big)=\bigcup_{\mu\ge 0: \,\Phi(\sqrt{\mathstrut\mu})=\Phi(\sqrt{\mathstrut\lambda})}\ker(L-\mu).
\]
\end{corol}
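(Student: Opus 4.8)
The plan is to derive everything from Theorem~\ref{th6}. Writing $f(\lambda):=\Phi(\sqrt\lambda)$, that theorem asserts $A=f(L)$, and since $L\ge 0$ by Theorem~\ref{th1}(a) the spectrum of $L$ lies in $[0,\infty)$, where $f$ is bounded and continuous (as $\lambda\to 0$ one has $f(\lambda)\to 1=\Phi(0)$). Thus $A$ is an explicit Borel function of $L$, and both assertions of the corollary become statements about how the various spectral components behave under the functional calculus $g\mapsto g(L)$ evaluated at $g=f$. The whole argument is therefore a spectral-mapping analysis tailored to this particular $f$, entirely parallel to the one carried out for the function $\lambda\mapsto\cos\sqrt\lambda$ in Theorem~\ref{th1}.

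First I would treat the point spectrum and the eigenspaces, which come directly from the spectral theorem. Denoting by $E_A$ and $E_L$ the spectral measures of $A$ and $L$, the identity $A=f(L)$ gives the pushforward relation $E_A(\Omega)=E_L\big(f^{-1}(\Omega)\big)$, so for every $c\in\RR$ one has $\ker(A-c)=\ran E_L\big(f^{-1}(\{c\})\big)$. Fixing $\lambda\in\spec_\mathrm{p}L$ and putting $c=\Phi(\sqrt\lambda)$, the level set is $f^{-1}(\{c\})=\{\mu\ge 0:\Phi(\sqrt\mu)=\Phi(\sqrt\lambda)\}$. Since $\Phi(\sqrt\mu)\to 0$ as $\mu\to\infty$, this set is finite whenever $c\ne 0$ and equals $\Sigma$ when $c=0$; in either case it has no absolutely continuous or singular continuous $E_L$-mass, so $E_L\big(f^{-1}(\{c\})\big)=\sum_{\mu\in f^{-1}(\{c\})}E_L(\{\mu\})$ projects onto the closed sum of the eigenspaces $\ker(L-\mu)$. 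This is precisely the claimed formula for $\ker\big(A-\Phi(\sqrt\lambda)\big)$ (the union being read as this sum, which is a genuine finite sum when $c\ne 0$), and $\ker(A-c)\ne\{0\}$ exactly when some $\mu$ in the level set is an eigenvalue of $L$, which is the case $\star=\mathrm{p}$ of the spectral identity.

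For the remaining types $\star\in\{\mathrm{pp},\mathrm{disc},\mathrm{ess},\mathrm{ac},\mathrm{sc}\}$ I would exploit that $f$ is real-analytic on $(0,\infty)$ with only isolated critical points (those $\mu$ with $\tan\sqrt\mu=\sqrt\mu$) and continuous up to $0$. Partitioning $[0,\infty)$ into the countably many maximal intervals of strict monotonicity of $f$, on the interior of each $f$ is a homeomorphism onto its image with locally Lipschitz inverse, so $f$ and its local inverses carry Lebesgue-null sets to Lebesgue-null sets. Hence the pushforward $\Omega\mapsto E_L\big(f^{-1}(\Omega)\big)$ preserves the Lebesgue measure class, and with it the splitting of the spectral measure into absolutely continuous, singular continuous and pure point parts, while continuity of $f$ yields the essential-spectrum identity via Weyl sequences; the standard descriptions of $\spec_\star$ as the support or location of the corresponding spectral component then furnish each identity. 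The main obstacle is exactly the interplay of the critical points, the boundary point $\lambda=0$, and the accumulation value $0=\lim_{\lambda\to\infty}f(\lambda)$: one must verify that the countable set of critical values together with this limit value does not corrupt the absolutely continuous and essential parts, and that the right-hand sets $\{\Phi(\sqrt\lambda):\lambda\in\spec_\star L\}$ come out closed, the only candidate for a missing limit point being $0$, which is controlled through the Dirichlet values $\Sigma$. As this is the same delicate point already resolved for $\cos\sqrt\lambda$, I would model the estimates on the proof of Theorem~\ref{th1}.
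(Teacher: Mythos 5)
Your route is exactly the paper's: the paper offers no argument for this corollary beyond invoking Theorem~\ref{th6} and the functional calculus, and your proposal fills in those details. The part you carry out in full is correct: with $f(\mu):=\Phi(\sqrt{\mu}\,)$, the pushforward relation $E_A(\Omega)=E_L\bigl(f^{-1}(\Omega)\bigr)$, the countability of every level set of $f$ in $[0,\infty)$ (finite for $c\neq 0$, equal to $\Sigma$ for $c=0$), and the identification of $\ker\bigl(A-\Phi(\sqrt{\lambda}\,)\bigr)$ with the closed orthogonal sum of the spaces $\ker(L-\mu)$ over the level set give both the kernel formula and the case $\star=\mathrm{p}$.

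The step you defer at the end, however, is a genuine gap, and it cannot be closed in the form stated. For the remaining types your argument (invariance of the ac/sc/pp subspaces under $f$, Weyl sequences, spectral mapping) can only determine $\spec_\star A$ up to closure: spectra of restrictions are closed, and since $\spec L$ is always unbounded while $f(\mu)\to 0$ as $\mu\to\infty$, the set $f(\spec_\star L)$ need not contain its limit point $0$. Contrary to your final sentence, this is \emph{not} ``the same delicate point already resolved'' in Theorem~\ref{th1}: part~(d) there explicitly assumes $\lambda\notin\Sigma=f^{-1}(\{0\})$, so the difficulty is excised there, not resolved --- and here it cannot be resolved, because the printed identity is false at $0$. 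Concretely, take the network consisting of a single edge with $c\equiv 1$ (allowed, and \eqref{eq-m1} holds): then $L$ is the Neumann Laplacian on $[0,1]$, so $\spec_{\mathrm{ess}}L=\emptyset$ and $\spec_{\mathrm{disc}}L=\{\pi^2 n^2: n\geq 0\}$, while $0$ is an eigenvalue of $A=\Phi(\sqrt{L}\,)$ of infinite multiplicity; hence $\spec_{\mathrm{ess}}A=\{0\}\neq\emptyset=f(\spec_{\mathrm{ess}}L)$ and $\spec_{\mathrm{disc}}A=\{1\}\neq\{0,1\}=f(\spec_{\mathrm{disc}}L)$. The same phenomenon occurs on infinite graphs: on the equilateral regular tree of degree at least $3$ one has $\spec P=[-\rho,\rho]$ with $\rho<1$, so by Theorem~\ref{th1} the set $\spec_{\mathrm{ac}}L$ is unbounded and disjoint from $\Sigma$, whence $0\in\spec_{\mathrm{ac}}A\setminus f(\spec_{\mathrm{ac}}L)$. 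The correct statement --- the one the paper itself uses in the corollary immediately following this one --- intersects both sides with $\RR\setminus\{0\}$; with that modification your plan does go through, because $f^{-1}(K)$ is compact for every compact $K\subset\RR\setminus\{0\}$, which restores closedness of images and makes the compactness arguments for each spectral type work. So you should either prove the corrected statement, or record explicitly that the assertion as printed fails for $\star\in\{\mathrm{disc},\mathrm{ess},\mathrm{ac}\}$ at the point $0$.
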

Combining Corollary~\ref{corol1} with Theorem~\ref{th1} and with Corollary~\ref{corol13} below and using the fact
that $\Sigma=\Phi^{-1}(0)$ we obtain, in particular, a more detailed version of~Proposition~\ref{prop2}:
\begin{corol}
Assume that the condition~\eqref{eq-m1} is satisfied, then 
for any index $\star\in\{\mathrm{p},\mathrm{pp},\mathrm{disc},\mathrm{ess},\mathrm{ac},\mathrm{sc}\}$ there holds
\[
\spec_\star A\setminus\{0\}=\Big\{\Phi(\omega): \omega\in\RR\setminus\{\pi n:\, n\in\NN\} \text{ and } \cos\omega\in\spec_\star P
\Big\},
\]
and  for any $\lambda\in \RR\setminus\{0\}$ we have
\[
\ker(A-\lambda)=\bigcup_{\mu\ge 0:\, \Phi(\sqrt{\mathstrut \mu})=\lambda}\gamma(\mu)\ker(P-\cos\sqrt\mu)
\]
with the maps $\gamma$ defined in~Theorem~\ref{th1}(c).
Furthermore, there holds
\[
\ker A= \bigcup_{n=1}^\infty \ker(L-\pi^2 n^2),
\]
and the structure of each kernel at the right-hand side is described in Theorem~\ref{propker}.
In particular, the point $0$ belongs to $\spec_\mathrm{p} A$ unless $m^0(X^0)=\infty$ and
$X$ is a tree with the property that after removal of any edge
at least of the connected components is recurrent.
\end{corol}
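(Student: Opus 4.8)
The plan is to deduce this statement by assembling the reductions already established: Corollary~\ref{corol1} expresses everything about $A$ through $L$, while Theorem~\ref{th1} expresses $L$ through $P$ away from the exceptional set $\Sigma$. The bridge between the two is the elementary identity $\Sigma=\Phi^{-1}(0)$, which I would record first: for $\lambda>0$ one has $\Phi(\sqrt\lambda)=\sin\sqrt\lambda/\sqrt\lambda$, which vanishes precisely when $\sqrt\lambda\in\{\pi n:n\in\NN\}$, i.e.\ when $\lambda\in\Sigma$; since $\Phi(0)=1\neq0$ one indeed gets $\Phi^{-1}(0)=\Sigma$.

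For the spectral description I would start from $\spec_\star A=\{\Phi(\sqrt\lambda):\lambda\in\spec_\star L\}$ (Corollary~\ref{corol1}). Deleting the point $0$ from this set of values removes exactly those produced by $\lambda\in\Sigma$, by the identity above, so $\spec_\star A\setminus\{0\}=\{\Phi(\sqrt\lambda):\lambda\in\spec_\star L\setminus\Sigma\}$. Since $L\ge0$ by Theorem~\ref{th1}(a) we have $\spec_\star L\subseteq[0,\infty)$, and on $\RR\setminus\Sigma$ Theorem~\ref{th1}(d) turns $\lambda\in\spec_\star L$ into $\cos\sqrt\lambda\in\spec_\star P$. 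Substituting $\omega=\sqrt\lambda\ge0$ (so that $\Phi(\sqrt\lambda)=\Phi(\omega)$, $\cos\sqrt\lambda=\cos\omega$, and $\lambda\notin\Sigma\Leftrightarrow\omega\notin\{\pi n:n\in\NN\}$) yields $\{\Phi(\omega):\omega\ge0,\ \omega\notin\{\pi n:n\in\NN\},\ \cos\omega\in\spec_\star P\}$. Because $\Phi$ and $\cos$ are even and the value $0$ is absent from the left-hand side, this coincides with the set obtained by letting $\omega$ range over all of $\RR\setminus\{\pi n:n\in\NN\}$, which is the asserted formula.

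For the eigenspaces I would argue analogously. Fix $\lambda\neq0$ and write $\lambda=\Phi(\sqrt{\mu_0})$ with $\mu_0\in\spec_\mathrm{p} L$ (if no such $\mu_0$ exists then $\lambda\notin\spec_\mathrm{p} A$ and both sides below are $\{0\}$). The kernel identity of Corollary~\ref{corol1} gives $\ker(A-\lambda)=\bigcup_{\mu\ge0:\,\Phi(\sqrt\mu)=\lambda}\ker(L-\mu)$; every $\mu$ occurring here satisfies $\Phi(\sqrt\mu)=\lambda\neq0$, hence $\mu\notin\Sigma$, so Theorem~\ref{th1}(d) supplies $\ker(L-\mu)=\gamma(\mu)\ker(P-\cos\sqrt\mu)$, and substitution produces the claimed expression for $\ker(A-\lambda)$. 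For $\lambda=0$ the same mechanism applies at the value $0$: now $\Phi(\sqrt\mu)=0$ forces $\mu\in\Sigma$, so Corollary~\ref{corol1} (equivalently, the functional calculus applied to $A=\Phi(\sqrt L)$ from Theorem~\ref{th6}) gives $\ker A=\bigcup_{\mu\in\Sigma}\ker(L-\mu)=\bigcup_{n=1}^\infty\ker(L-\pi^2 n^2)$, these being mutually orthogonal eigenspaces of $L$. The explicit structure of each $\ker(L-\pi^2 n^2)$, and the resulting criterion for $0\in\spec_\mathrm{p} A$ that reproduces the dichotomy of Proposition~\ref{prop2}, I would then quote from Theorem~\ref{propker} and Corollary~\ref{corol13}.

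The analysis here is light; the one delicate point is bookkeeping around the value $0$. Theorem~\ref{th1}(d) is valid only off $\Sigma$, so I must make sure that the values and eigenspaces discarded in passing to $\spec_\star A\setminus\{0\}$ and to $\ker(A-\lambda)$ with $\lambda\neq0$ are exactly those lying over $\Sigma$; this is precisely what $\Sigma=\Phi^{-1}(0)$ secures. The same care is needed when symmetrizing the parameter $\omega$ from $[0,\infty)$ to $\RR$: since $\Phi$ vanishes on all of $\pi\ZZ\setminus\{0\}$, it is the prior removal of $0$ from the spectral side that keeps the two descriptions consistent. Everything else is routine substitution together with a citation of the forthcoming results on the kernels $\ker(L-\pi^2 n^2)$.
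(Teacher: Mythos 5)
Your proposal is correct and takes essentially the same route as the paper: the paper offers no proof of this corollary beyond the one sentence that it follows by combining Corollary~\ref{corol1} with Theorem~\ref{th1}, Theorem~\ref{propker}/Corollary~\ref{corol13}, and the identity $\Sigma=\Phi^{-1}(0)$, which is exactly the assembly you carry out (removal of the value $0$ to pass from $\spec_\star L$ to $\spec_\star L\setminus\Sigma$, Theorem~\ref{th1}(d) off $\Sigma$ for both spectra and kernels, and the quoted results for $\ker A$). The one delicate point you flag at the symmetrization step is real but is an imprecision of the paper's own statement rather than of your argument: since $\{\pi n:n\in\NN\}$ excludes only \emph{positive} multiples of $\pi$, the right-hand side as literally written can pick up the value $0$ (via $\omega=-\pi n$ when $\pm1\in\spec_\star P$), and your reading --- both sides understood with $0$ removed, i.e.\ $|\omega|\notin\{\pi n:n\in\NN\}$ --- is the intended one.
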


One has a natural question whether the assumption \eqref{eq-m1} is really needed for the above results.
It seems to be a technical one, but we make use of it to establish
some local regularity properties of a Fourier-type expansion for $L$ which are of importance
for the subsequent constructions.
Note that the consideration of metric graphs which are not locally finite
already presents a certain challenge, and they only started to be studied in rather recent works~\cite{LSV,SSVW,schub}.
Fourier-type expansions on metric graphs were obtained so far for the locally finite case only,
see e.g. \cite{bls,LSS}, and our work can be also be viewed as a contribution in this direction.
Regardless of the preceding remarks, we believe that the assumption \eqref{eq-m1} as sufficiently general.
In particular, the both assumptions
\eqref{eq-m0} and \eqref{eq-m1} are valid for locally finite graphs,
and for the uniform case $c(xy)\equiv 1$ they both are equivalent to the local finiteness.
All our results are new even for the locally finite case, and Theorems \ref{th4} and~\ref{th6}
together with their corollaries are new even for the uniform case.

At last we remark that the methods we employ are very different
from those used to obtain the earlier results presented
in~Propositions~\ref{prop1}, \ref{prop2} and \ref{prop3a}, and our proofs
are completely independent; we hope that the machinery 
that we develop in the present work can be useful to study a larger class of~problems.

\section{Relations between $L$ and $P$}\label{sec-lp}

\subsection{Boundary triples}\label{ss-bt}

Let us recall some tools from the theory of self-adjoint extensions of symmetric operators.
We refer to \cite[Section 1]{BGP08} for a compact presentation
or to \cite{DM,GG} for a more detailed discussion.

\begin{defin}
Let $S$ be a closed linear operator in a separable
Hilbert space $\cH$ with the domain $\dom S$ which is dense in $\cH$.
Assume that there exist
an auxiliary Hilbert space $\cG$ and two linear maps $\Gamma,\Gamma':\dom S\to \cG$ such that
\begin{itemize}
\item for any $f,g\in\dom S$ there holds
\begin{equation}
       \label{eq-green}
\langle f,Sg\rangle_\cH-\langle Sf,g\rangle_\cH=\langle  \Gamma f,\Gamma' g\rangle_\cG-\langle\Gamma'f,\Gamma g\rangle_\cG,
\end{equation}
\item the map $(\Gamma,\Gamma'):\dom S\ni f\mapsto (\Gamma f,\Gamma'f)\in\cG\times\cG$ is surjective,
\item the set $\ker\,(\Gamma,\Gamma')$ is dense in $\cH$.
\end{itemize}
A triple $(\cG,\Gamma, \Gamma')$ with the above properties
is called a \emph{boundary triple} for the operator $S$. \qed
\end{defin}

It is known that an operator $S$ having a boundary triple $(\cG,\Gamma,\Gamma')$
has self-adjoint restrictions if and only if $S=S_0^*$, where
$S_0:=S\uhr\ker(\Gamma,\Gamma')$, see e.g. \cite[Theorem 1.13]{BGP08}; here and below the writing $T\uhr \cD$ means the restriction
of an operator $T$ to a subspace $\cD$.
From now on assume that this condition holds, then
the operators $H^0:=S\uhr \ker\Gamma$ and $H:=S\uhr\ker\Gamma'$
are self-adjoint \cite[Theorem 1.12]{BGP08}.
Let us discuss an approach to the spectral analysis of these operators.

One can show that for any $z\notin\spec H^0$ and any $g\in\cG$
one can find a unique $f\in\ker (S-z)$ with $\Gamma f=g$.
The map $\gamma(z): g\mapsto f$ is a linear map from $\cG$ to $\cH$ and an isomorphism between
$\cG$ and $\ker(S-z)$, see Theorem 1.23 in~\cite{BGP08}; it is sometimes referred to as \emph{Krein $\gamma$-field}
associated with the boundary triple. For the same $z$ denote by $M(z)$
the linear operator on $\cG$ acting as $M(z):=\Gamma' \gamma(z)$; this operator
will be referred to as the \emph{Weyl function}. The Weyl function encodes
an essential part of the information of the spectra of self-adjoint extensions,
see e.g. the review in~\cite{ABMN05}. The following theorem collects several known results.
\begin{theorem}
        \label{thmkp}
        Assume that the Weyl function $M$ has the form
$M(z)=n(z)^{-1}\big(m(z)-T\big)$, 
where $T$ is a bounded self-adjoint operator in $\cG$ and
$m$ and $n$ are scalar functions which are holomorphic outside $\spec H^0$.
Moreover, assume that there exists a spectral gap $J:=(a_0,b_0)\subset \RR\setminus \spec H^0$
such that $m$ and $n$ admit a holomorphic continuation to $J$, are both real-valued in $J$,
that $n\ne 0$ in $J$, and that $m(J)\cap \spec T\ne\emptyset$, then the following holds
\begin{itemize}
  \item[(a)] There exists an interval $K$ containing $m^{-1}(\spec T)\cap J$
  such that $m:K\to m(K)$ is a bijection; denote by $\mu$ the inverse function.
  \item[(b)] The operator $H_J$ is unitarily equivalent to $\mu(T_{m(J)})$. 
  \item[(c)] Moreover, there holds $H_J=U_J \mu(T_{m(J)}) U_J^*$, where
  $U_J$ is a unitary operator from $\ran E_T\big(m(J)\big)$ to $\ran E_H (J)$ given by 
  \[
U_J = \int_J \sqrt{\dfrac{n(\lambda)}{m'(\lambda)}}\, \gamma(\lambda) dE_T\big(m(\lambda)\big),
  \]
  and its adjoint $U^*_J:\ran E_H (J)\to \ran E_T\big(m(J)\big)$ acts as
  \[
U^*_J = \int_J \sqrt{\dfrac{n(\lambda)}{m'(\lambda)}} dE_T\big(m(\lambda)\big) \, \gamma(\lambda)^*,
  \]
   and both integrals are understood as improper Riemann-Stieltjes integrals.
  \item[(d)] For  any $\lambda\in \RR\setminus\spec H^0$ and
  $\star\in\{\mathrm{p},\mathrm{pp},\mathrm{disc},\mathrm{ess},\mathrm{ac},\mathrm{sc}\}$
  the condition $\lambda\in\spec_\star L$
  is equivalent to the condition $m(\lambda)\in\spec_\star T$, 
  and one has the equality $\ker(L-\lambda)=\gamma(\lambda)\ker \big(T-m(\lambda)\big)$.
\end{itemize}
\end{theorem}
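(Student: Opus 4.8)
The plan is to extract all four assertions from the Weyl function $M$, relying on two standard facts of boundary-triple theory which I would take from \cite{BGP08,DM}: first, that $M$ is an operator-valued Nevanlinna function whose derivative on a spectral gap of $H^0$ satisfies $M'(\lambda)=\gamma(\lambda)^*\gamma(\lambda)\ge 0$; and second, that $\ker(H-\lambda)=\gamma(\lambda)\ker M(\lambda)$ for every $\lambda\notin\spec H^0$. For part~(a) I would fix $t\in\spec T$ and note that, for $g$ in the spectral subspace of $T$ at $t$, the scalar function $\lambda\mapsto\langle M(\lambda)g,g\rangle=\big(m(\lambda)-t\big)/n(\lambda)$ is a nonconstant Nevanlinna function, holomorphic and real on $J$, hence strictly increasing there. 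This already forces each $t\in\spec T$ to have at most one $m$-preimage in $J$; differentiating gives $m'(\lambda)n(\lambda)-\big(m(\lambda)-t\big)n'(\lambda)\ge 0$ for all $t\in\spec T$, and since $n$ has constant sign on $J$, evaluating this at the spectral values of $T$ bracketing $m(\lambda)$ shows that $m'$ is nonzero with constant sign on the convex hull of $m^{-1}(\spec T)\cap J$. Thus $m$ is a strictly monotone bijection on a closed interval $K\supseteq m^{-1}(\spec T)\cap J$, and $\mu:=(m\uhr K)^{-1}$ is the required inverse.

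The heart of the argument is (b)--(c), and the key is to normalise the $\gamma$-field on the eigenspaces of $T$. Differentiating $M(\lambda)=n(\lambda)^{-1}\big(m(\lambda)-T\big)$ yields $M'(\lambda)=n(\lambda)^{-2}\big[m'(\lambda)n(\lambda)I-n'(\lambda)\big(m(\lambda)I-T\big)\big]$, so on $\ker\big(T-m(\lambda)\big)$, where $m(\lambda)I-T$ vanishes, the identity $M'(\lambda)=\gamma(\lambda)^*\gamma(\lambda)$ collapses to $\gamma(\lambda)^*\gamma(\lambda)=m'(\lambda)/n(\lambda)$. Hence $\sqrt{n(\lambda)/m'(\lambda)}\,\gamma(\lambda)$ is isometric on $\ker\big(T-m(\lambda)\big)$, which is exactly the weight appearing in $U_J$. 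I would then build $U_J$ by integrating this normalised field against $dE_T\big(m(\lambda)\big)$; under the change of variable $s=m(\lambda)$ (licit by~(a)) the factor $\sqrt{n/m'}$ is precisely the Jacobian that turns $U_J$ into an isometry of $\ran E_T\big(m(J)\big)$ onto its image. The intertwining relation $H_JU_J=U_J\mu\big(T_{m(J)}\big)$ then follows because $\gamma(\lambda)g\in\ker(H-\lambda)$ for $g\in\ker\big(T-m(\lambda)\big)$, so $H$ acts as multiplication by $\lambda=\mu\big(m(\lambda)\big)$ on the range of the normalised field, while surjectivity onto $\ran E_H(J)$ (hence unitarity, giving~(b)) comes from the spectral-detection property of the Weyl function, which identifies $\ran E_H(J)$ with the closed span of the ranges of $\gamma(\lambda)$, $\lambda\in J$.

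Finally, part~(d) is obtained by localising to the gap of $H^0$ containing a given $\lambda$: the unitary equivalence of~(b) transports each spectral type from $\mu\big(T_{m(J)}\big)$ to $H_J$, and since $\mu$ is a real-analytic strictly monotone diffeomorphism it preserves the types $\mathrm{p},\mathrm{pp},\mathrm{disc},\mathrm{ess},\mathrm{ac},\mathrm{sc}$ individually, giving $\lambda\in\spec_\star H\iff m(\lambda)\in\spec_\star T$; the eigenspace identity $\ker(H-\lambda)=\gamma(\lambda)\ker M(\lambda)=\gamma(\lambda)\ker\big(T-m(\lambda)\big)$ is immediate from the boundary-triple description of $\ker(H-\lambda)$ together with $n(\lambda)\ne 0$. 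The step I expect to be genuinely delicate is~(b)--(c): giving a rigorous meaning to the operator-valued improper Riemann--Stieltjes integrals defining $U_J$ and $U_J^*$, proving their convergence, and checking that the resulting isometry is onto $\ran E_H(J)$ and not merely into it. This is the technical core, and I would import it from the abstract machinery developed in \cite{KP11,KP12}; by comparison the monotonicity in~(a) and the spectral-type bookkeeping in~(d) are routine.
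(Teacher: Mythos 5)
The first thing to note is that the paper does not prove Theorem~\ref{thmkp} at all: immediately after the statement the authors record that parts (a) and (b) are \cite[Theorem 2]{KP11}, part (c) is \cite[Corollary 11]{KP12}, and part (d) is \cite[Theorem 3.16]{BGP08} (or a corollary of (b)/(c)). The theorem is explicitly a collection of imported results. Since you also defer the genuinely hard step --- the meaning and convergence of the operator-valued Riemann--Stieltjes integrals and the surjectivity of $U_J$ onto $\ran E_H(J)$ --- to \cite{KP11,KP12}, your proposal has the same logical status as the paper's treatment, and the material you add on top is sound and faithful to how those cited proofs are structured: the Nevanlinna-function monotonicity argument for (a) (with the minor caveat that for $t\in\spec T$ which is not an eigenvalue one must work with approximate eigenvectors, and use that $\gamma(\lambda)$ is an isomorphism onto $\ker(S-\lambda)$, hence bounded below, to keep the inequalities strict); the identity $\gamma(\lambda)^*\gamma(\lambda)=\bigl(m'(\lambda)/n(\lambda)\bigr)I$ on $\ker\bigl(T-m(\lambda)\bigr)$, obtained from $M'(\lambda)=\gamma(\lambda)^*\gamma(\lambda)$, which is indeed the exact source of the weight $\sqrt{n/m'}$ in (c); and the derivation of (d) from (b) together with $\ker(H-\lambda)=\gamma(\lambda)\ker M(\lambda)$, which matches the paper's own remark that (d) can be viewed as a corollary of (b)/(c).

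One correction, though. Your stated mechanism for surjectivity --- that $\ran E_H(J)$ is ``the closed span of the ranges of $\gamma(\lambda)$, $\lambda\in J$'' --- is not a correct fact. For each $\lambda$ the full range of $\gamma(\lambda)$ is $\ker(S-\lambda)$, most of whose vectors have nothing to do with $E_H(J)$ (only $\gamma(\lambda)g$ with $g\in\ker M(\lambda)$ lie in $\ker(H-\lambda)$), and when $H$ has purely continuous spectrum in $J$ there are no eigensolutions of $H$ to span anything, so no density-of-eigenvectors argument can identify $\ran E_H(J)$; the construction in \cite{KP11,KP12} obtains the unitary and its surjectivity from Krein's resolvent formula combined with Stone's formula and the spectral calculus, not from a span of ranges of the $\gamma$-field. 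Because you explicitly flagged exactly this step as the one you would import from \cite{KP11,KP12}, the mis-statement does not break your proposal, but it should not be presented as the reason (b) holds.
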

The parts (a) and (b) were obtained in \cite[Theorem 2]{KP11} and the part (c) was shown in \cite[Corollary 11]{KP12}.
The part (d) was actually proved in the earlier paper \cite[Theorem 3.16]{BGP08}, but it can be viewed as a direct corollary of~(b) and/or~(c).

\subsection{Spectral analysis of $L$}\label{ss-spl}

Let us put the study of the operator $L$ into the framework of boundary triples.
Recall the Sobolev inequalities: there exist $a,b>0$ such that
for all $f\in H^2(0,1)$ we have
\begin{equation}
           \label{eq-sob}
\|f\|^2_{\ELL^\infty(0,1)}+\|f'\|^2_{\ELL^\infty(0,1)}\le a\|f''\|^2_{\ELL^2(0,1)}+b\|f\|^2_{\ELL^2(0,1)}
\end{equation}
and for all $f\in H^1(0,1)$ there holds
\begin{equation}
           \label{eq-sob2}
\|f\|^2_{\ELL^\infty(0,1)}\le a\|f'\|^2_{\ELL^2(0,1)}+b\|f\|^2_{\ELL^2(0,1)}.
\end{equation}
Introduce an operator $S$ acting in $\ELL^2(X^1,m^1)$ as $SF=-F''$
on the domain
$\dom S:=\big\{
F\in \Hat H^2(X^1,m^1): \text{Eq. \eqref{eq-fcont} holds}
\big\}$.

\begin{lemma}\label{lem4}
The operator $S$ is closed.
\end{lemma}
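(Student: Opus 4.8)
We want to show that the operator $S$ defined by $SF=-F''$ on
\[
\dom S=\big\{F\in \Hat H^2(X^1,m^1):\text{\eqref{eq-fcont} holds}\big\}
\]
is closed. The plan is to verify directly that the graph of $S$ is closed in $\ELL^2(X^1,m^1)\times\ELL^2(X^1,m^1)$: I take a sequence $(F_n)\subset\dom S$ with $F_n\to F$ and $SF_n=-F_n''\to H$ in $\ELL^2(X^1,m^1)$, and I must produce $F\in\dom S$ with $-F''=H$. Concretely, I must show that (i) $F$ lies in $\Hat H^2(X^1,m^1)$ with $F''=-H$, and (ii) $F$ still satisfies the continuity condition \eqref{eq-fcont} at the vertices.

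For step (i) the idea is to argue edge by edge. On each edge $[x,y]$ the components $(F_n)_{xy}$ form a Cauchy sequence in $\ELL^2(0,1)$ whose second derivatives $(F_n)_{xy}''=-H_{xy}$ also converge in $\ELL^2(0,1)$; since the one-dimensional second-derivative operator on $(0,1)$ with domain $H^2(0,1)$ is closed, the limit satisfies $F_{xy}\in H^2(0,1)$ with $F_{xy}''=-H_{xy}$. The point requiring care is to upgrade this edgewise statement to the global conclusion $F\in \Hat H^2(X^1,m^1)$, i.e. that $F,F',F''\in\ELL^2(X^1,m^1)$ with the correct norms; this follows from summing the edge estimates and using that $\ELL^2$- and $\Hat H^2$-convergence on the whole graph is controlled by the weighted sums over edges. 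The Sobolev inequality \eqref{eq-sob} is the crucial quantitative tool here: it gives
\[
\|(F_n-F_m)_{xy}\|^2_{\ELL^\infty(0,1)}+\|(F_n-F_m)'_{xy}\|^2_{\ELL^\infty(0,1)}
\le a\|(F_n-F_m)''_{xy}\|^2_{\ELL^2(0,1)}+b\|(F_n-F_m)_{xy}\|^2_{\ELL^2(0,1)},
\]
and after multiplying by $c(xy)$ and summing over all edges it shows that $(F_n)$ is Cauchy not only in $\ELL^2$ but also in the stronger sense that controls the boundary values $F_n(xy,0)$, $F_n(xy,1)$ and the derivatives $F_n'(xy,0\pm)$ uniformly with the correct weights.

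This uniform control over boundary values is exactly what is needed for step (ii), which I expect to be the main obstacle: the continuity condition \eqref{eq-fcont} couples the endpoint values of $F$ across all edges incident to a common vertex, and in the absence of local finiteness a vertex may have infinitely many incident edges. The danger is that edgewise convergence of the boundary traces need not respect the constraint $F(xu,0)=F(xv,0)$ unless the convergence of the traces is suitably coordinated. The remedy is precisely the weighted Sobolev bound above: it shows that the map $\dom S\ni F\mapsto \big(F(xy,0),F(xy,1)\big)_{x\sim y}$ is continuous from the graph norm into the appropriate weighted sequence space, so the trace of the limit $F$ is the limit of the traces of $F_n$. Since each $F_n$ satisfies \eqref{eq-fcont}, the common value $F_n(x)$ at a vertex $x$ converges, and passing to the limit preserves the equalities $F(xu,0)=F(xv,0)=:F(x)$. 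I would organize the write-up so that the weighted Sobolev estimate is established once and then invoked for both the $\Hat H^2$-regularity in (i) and the stability of the vertex condition in (ii), thereby concluding that $F\in\dom S$ and $SF=H$, so the graph of $S$ is closed.
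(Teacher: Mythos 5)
Your proposal is correct and is in substance the same argument as the paper's: the paper notes that the decoupled operator $\Tilde S$ with $\dom\Tilde S=\Hat H^2(X^1,m^1)$ is closed as an orthogonal direct sum of closed edgewise operators (your step (i)), and that the constraints \eqref{eq-fcont} are kernels of the functionals $F\mapsto F(xv,0)-F(xu,0)$, which are continuous in the graph norm by \eqref{eq-sob} (your step (ii)), so that $S$ is the restriction of a closed operator to a graph-norm-closed subspace. Your sequential verification merely unfolds this abstract argument, relying on the same two ingredients; your extra worry about coordinating traces at vertices of infinite degree is unnecessary, since each condition in \eqref{eq-fcont} involves only two edges and passes to the limit edgewise.
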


\begin{proof}
Consider the operator $\Tilde S$ acting on the domain $\dom \Tilde S=\Hat H^2(X^1,m^1)$
as $\Tilde S F= -F''$. This operator is obviously closed being an orthogonal
direct sum of closed operators.
Furthermore, for all $x\in X^0$ and $u,v\sim x$ consider the functionals
$l_{xuv}$ on $\dom \Tilde S$ given by $l_{xuv}F=F(xv,0)-F(xu,0)$. By \eqref{eq-sob},
all these functionals are continuous in the graph norm of $\Tilde S$, hence
their kernels are closed in the graph norm of $\Tilde S$.
As $S$ is the restriction of $\Tilde S$
to the intersection of these kernels, it is a closed operator.
\end{proof}

For $F\in\dom S$ define functions $\Gamma F:X^0\to \CC$ and $\Gamma'F:X^0\to \CC$ by
\[
(\Gamma F)(x):= F(x),
\quad
(\Gamma' F)(x):=\dfrac{F'(x)}{m^0(x)}, \quad x\in X^0;
\]
here we use the notation of Eqs.~\eqref{eq-fcont} and \eqref{eq-fder}.
The following lemma adapts Lemma~2 from~\cite{KP06} to the case
of weighted networks.
\begin{lemma}\label{lem5}
The operators $\Gamma,\Gamma'$ map $\dom S$ to $\ell^2(X^0,m^0)$, and
the triple $\big(\ell^2(X^0,m^0),\Gamma,\Gamma'\big)$
is a boundary triple for $S$. 
\end{lemma}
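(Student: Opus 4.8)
The plan is to establish the mapping property and then verify in turn the three defining properties of a boundary triple: Green's identity \eqref{eq-green}, surjectivity of $(\Gamma,\Gamma')$, and density of its kernel. For the mapping property I would estimate each component vertex by vertex using the Sobolev inequality \eqref{eq-sob}. For $\Gamma$ this is immediate: since $F(x)=F_{xy}(0)$ one has $|F(x)|^2\le\|F_{xy}\|_{\ELL^\infty(0,1)}^2$, so multiplying by $c(xy)$, summing over $y\sim x$ and then over $x$ gives $\sum_x m^0(x)|F(x)|^2\le 2a\|F''\|^2_{\ELL^2(X^1,m^1)}+2b\|F\|^2_{\ELL^2(X^1,m^1)}$. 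For $\Gamma'$ the point requiring care in the non-locally-finite setting is the sum defining $F'(x)$; here the Cauchy--Schwarz inequality applied with the weights $c(xy)$ yields
\[
\frac{|F'(x)|^2}{m^0(x)}=\frac{1}{m^0(x)}\Bigl|\sum_{y:y\sim x}c(xy)F'(xy,0+)\Bigr|^2\le\sum_{y:y\sim x}c(xy)\,|F'(xy,0+)|^2,
\]
and since $|F'(xy,0+)|^2\le\|F'_{xy}\|^2_{\ELL^\infty(0,1)}$ is again controlled by \eqref{eq-sob}, summing over $x$ gives $\sum_x m^0(x)|(\Gamma'F)(x)|^2<\infty$. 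Only \eqref{eq-m0} is used, and both estimates in fact show that $\Gamma,\Gamma'$ are bounded from $\dom S$ (with the graph norm of $S$) into $\ell^2(X^0,m^0)$; this boundedness is what will legitimize the rearrangements below.

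For Green's identity I would integrate by parts on each edge, obtaining on the oriented edge $(xy)$ the boundary contribution $-\overline{f_{xy}(1)}g'_{xy}(1)+\overline{f'_{xy}(1)}g_{xy}(1)+\overline{f_{xy}(0)}g'_{xy}(0)-\overline{f'_{xy}(0)}g_{xy}(0)$ carrying the weight $\tfrac12 c(xy)$. The key bookkeeping step uses the identification $(xy,t)\equiv(yx,1-t)$, under which the length derivative changes sign, so that the terms evaluated at $t=1$ on $(xy)$ coincide with those evaluated at $t=0$ on $(yx)$; after relabeling, all boundary terms reduce to evaluations at $t=0$ and the factor $\tfrac12$ is absorbed by the doubling of edges. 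Inserting the continuity condition \eqref{eq-fcont} (so that $f_{xy}(0)=f(x)$ and $g_{xy}(0)=g(x)$ are independent of $y$) and the definition \eqref{eq-fder} of $F'(x)$, the boundary sum collapses to $\sum_x\bigl(\overline{f(x)}g'(x)-\overline{f'(x)}g(x)\bigr)$, which is exactly $\langle\Gamma f,\Gamma'g\rangle-\langle\Gamma'f,\Gamma g\rangle$ in $\ell^2(X^0,m^0)$ once the factors $m^0(x)$ from the scalar product and from the definition of $\Gamma'$ cancel. The absolute convergence needed to interchange summations and to merge the $t=0$ and $t=1$ terms follows from the $\ell^2$ boundedness established above.

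For surjectivity, given $(a,b)\in\ell^2(X^0,m^0)\times\ell^2(X^0,m^0)$ I would exhibit a preimage by a gluing construction: fix once and for all profile functions $\psi,\chi\in C^\infty[0,1]$ with $\psi(0)=1$, $\chi'(0)=1$ and all remaining values and first derivatives of $\psi,\chi$ at the endpoints $0$ and $1$ equal to zero, and set $F_{xy}(t):=a(x)\psi(t)+a(y)\psi(1-t)+b(x)\chi(t)-b(y)\chi(1-t)$. Then $F(x)=a(x)$ and $F'(xy,0+)=b(x)$, whence $\Gamma F=a$ and $\Gamma'F=b$; that $F\in\Hat H^2(X^1,m^1)$ follows because $\|F_{xy}^{(j)}\|_{\ELL^2(0,1)}^2\le C(|a(x)|^2+|a(y)|^2+|b(x)|^2+|b(y)|^2)$ with $C$ depending only on $\psi,\chi$, and summing against $c(xy)$ reproduces $\|a\|^2$ and $\|b\|^2$ (using $\sum_{y\sim x}c(xy)=m^0(x)$ and, after exchanging the order of summation, the corresponding symmetric sum in the neighbour variable). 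Again only \eqref{eq-m0} is required.

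Finally, density of $\ker(\Gamma,\Gamma')$ is the easiest step: every function supported on a single edge whose component lies in $C_c^\infty(0,1)$ vanishes together with its derivative at both endpoints, hence belongs to $\ker(\Gamma,\Gamma')$, and finite sums of such functions are dense in $\ELL^2(X^1,m^1)$. I expect the main obstacle to be the bookkeeping in Green's identity, together with the justification---via the $\ell^2(X^0,m^0)$ boundedness of $\Gamma$ and $\Gamma'$---that the interchange of summations and the merging of the $t=0$ and $t=1$ boundary terms remain legitimate when $X$ is not assumed to be locally finite.
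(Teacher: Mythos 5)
Your overall route coincides with the paper's: Sobolev estimates \eqref{eq-sob} combined with Cauchy--Schwarz for the mapping properties of $\Gamma$ and $\Gamma'$, edgewise integration by parts with the $(xy,t)\equiv(yx,1-t)$ bookkeeping for Green's identity, an explicit gluing construction for surjectivity, and single-edge $C_c^\infty$ functions for density of $\ker(\Gamma,\Gamma')$; your observation that the rearrangements in Green's identity are justified by the absolute convergence coming from the $\ell^2$ bounds is correct, and is in fact slightly more explicit than the paper. However, your surjectivity construction contains a concrete error. The formula
\[
F_{xy}(t)=a(x)\psi(t)+a(y)\psi(1-t)+b(x)\chi(t)-b(y)\chi(1-t)
\]
is not compatible with the identification $(xy,t)\equiv(yx,1-t)$: swapping $x\leftrightarrow y$ and $t\mapsto 1-t$ reproduces the $\psi$-terms but flips the sign of the $\chi$-terms, so that
\[
F_{xy}(t)-F_{yx}(1-t)=2b(x)\chi(t)-2b(y)\chi(1-t),
\]
which is not identically zero unless $b$ vanishes. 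Hence your $F$ is not a well-defined function on $X^1$. Equivalently, if you fix one orientation per edge and use your formula there, then at the far endpoint the outgoing derivative is $F'(yx,0+)=-F'_{xy}(1)=-b(y)$, so the edge $[x,y]$ contributes $-c(xy)b(y)$ instead of $+c(xy)b(y)$ to the sum defining $F'(y)$, and $\Gamma'F\neq b$ in general.

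The repair is a one-sign change: set $F_{xy}(t)=a(x)\psi(t)+a(y)\psi(1-t)+b(x)\chi(t)+b(y)\chi(1-t)$. This expression is symmetric under $(x,t)\leftrightarrow(y,1-t)$, hence well defined on $X^1$; it still gives $F_{xy}(0)=a(x)$ and $F'_{xy}(0+)=b(x)$, since the term $b(y)\chi(1-t)$ has derivative $-b(y)\chi'(1-t)$, which vanishes at $t=0$ because $\chi'(1)=0$; and your $\Hat H^2(X^1,m^1)$ estimate goes through verbatim, so $\Gamma F=a$ and $\Gamma'F=b$ as desired. For comparison, the paper's formula $F_{xy}=h(x)f_{00}+h(y)f_{01}+h'(x)f_{10}-h'(y)f_{11}$ does carry a minus sign, but there the profile $f_{11}$ is normalized by $f_{11}'(1)=+1$, i.e. $f_{11}=-\chi(1-\cdot)$ in your notation, which is exactly how the two sign conventions reconcile; transcribing that minus sign onto $\chi(1-\cdot)$ itself is what went wrong. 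Everything else in your proposal is sound.
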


\begin{proof}
Let us show first that for any $F\in \dom S$ one has
$\Gamma F\in \ell^2(X^0,m^0)$ and $\Gamma'F\in \ell^2(X^0,m^0)$.
Every component $F_{xy}$ belongs to $H^2(0,1)$, and using \eqref{eq-sob} we estimate
\begin{multline*}
\sum_{x\in X^0} m^0(x)\big|(\Gamma F)(x)\big|^2=
\sum_{x\in X^0} m^0(x)\big|F(x)\big|^2\\
\begin{aligned}
=\,&\sum_{x\in X^0} \sum_{y:y\sim x} c(xy)\big|F(x)\big|^2=\sum_{x\in X^0} \sum_{y:y\sim x} c(xy)\big|F_{xy}(0)\big|^2\\
\le\,& a\sum_{x\in X^0} \sum_{y:y\sim x} c(xy)\big\|F''_{xy}\big\|^2_{\ELL^2(0,1)}
+b\sum_{x\in X^0} \sum_{y:y\sim x} c(xy)\big\|F_{xy}\big\|^2_{\ELL^2(0,1)}\\
=\,&2a \|F''\|^2_{\ELL^2(X^1,m^1)}+2b\|F\|^2_{\ELL^2(X^1,m^1)}<\infty.
\end{aligned}
\end{multline*}
Hence, $\Gamma F\in \ELL^2(X^0,m^0)$.
Furthermore, using the Cauchy-Schwartz inequality one can estimate
\begin{align*}
\Big|\sum_{y:y\sim x} c(xy)F'(xy,0+)
\Big|^2&=\Big|\sum_{y:y\sim x} \sqrt{c(xy)}\cdot \sqrt{c(xy)}F'(xy,0+)
\Big|^2\\
&\le
\Big(
\sum_{y:y\sim x} c(xy)
\Big)\cdot
\Big(\sum_{y:y\sim x} c(xy)\big|F'(xy,0+)\big|^2
\Big)\\
&=m^0(x)\cdot \sum_{y:y\sim x} c(xy)\big|F'(xy,0+)\big|^2,
\end{align*}
hence
\begin{multline*}
\sum_{x\in X^0} m^0(x)\big|(\Gamma' F)(x)\big|^2= 
\sum_{x\in X^0} \dfrac{\big|F'(x)\big|^2}{m^0(x)}\\
\begin{aligned}
&=
\sum_{x\in X^0} \dfrac{1}{m^0(x)}\cdot \Big|\sum_{y:y\sim x} c(xy)F'(xy,0+)
\Big|^2\\
&\le
\sum_{x\in X^0}\sum_{y:y\sim x} c(xy)\big|F'(xy,0+)\big|^2
=
\sum_{x\in X^0}\sum_{y:y\sim x} c(xy)\big|F'_{xy}(0)\big|^2\\
&\le a\sum_{x\in X^0} \sum_{y:y\sim x} c(xy)\big\|F''_{xy}\big\|^2_{\ELL^2(0,1)}
+b\sum_{x\in X^0} \sum_{y:y\sim x} c(xy)\big\|F_{xy}\big\|^2_{\ELL^2(0,1)}\\
&=2a \|F''\|^2_{\ELL^2(X^1,m^1)}+2b\|F\|^2_{\ELL^2(X^1,m^1)}<\infty,
\end{aligned}
\end{multline*}
which implies $\Gamma'F\in \ell^2(X^0,m^0)$.

Now let us show that the map
\[
\dom S\ni F\mapsto(\Gamma F,\Gamma' F)\in \ell^2(X^0,m^0)\times \ell^2(X^0,m^0)
\]
is surjective.
Let us pick  functions $f_{jk}\in H^2[0,1]$ such that
$f_{jk}^{(i)}(l)=\delta_{ij}\delta_{kl}$ for all $i,j,k,l\in\{0,1\}$ and denote
\[
M_0:=\max_{j,k\in\{0,1\}}\Big( \|f_{jk}\|^2_{\ELL^2(0,1)}+\|f''_{jk}\|^2_{\ELL^2(0,1)}\Big).
\] 
Now let $h,h'\in \ell^2(X^0,m^0)$. Define a function 
$F:X^1\to\CC$ by
\[
F_{xy}=h(x)f_{00}+h(y)f_{01}+h'(x)f_{10}-h'(y)f_{11}.
\]
One has $F_{xy}\in H^2(0,1)$
and
\[
\|F_{xy}\|^2_{\ELL^2(0,1)}+\|F''_{xy}\|^2_{\ELL^2(0,1)}\le 4M_0^2\Big(
\big|h(x)\big|^2+\big|h(y)\big|^2
+
\big|h'(x)\big|^2+\big|h'(y)\big|^2
\Big)
\]
hence
\begin{multline*}
\|F\|^2_{\ELL^2(X^1,m^1)}+\|F''\|^2_{\ELL^2(X^1,m^1)}\\
\begin{aligned}
=\, &
\dfrac12\,\sum_{x\in X^0}\sum_{y: y\sim x}c(xy)\cdot\big(\|F_{xy}\|^2_{\ELL^2(0,1)}+ \|F''_{xy}\|^2_{\ELL^2(0,1)}\big)\\
\le\, &
2M_0^2\,\sum_{x\in X^0}\sum_{y: y\sim x}c(xy)\Big(\big|h(x)\big|^2+\big|h(y)\big|^2\big)\\
&+
2M_0^2 \,\sum_{x\in X^0}\sum_{y: y\sim x}c(xy)\Big(\big|h'(x)\big|^2+\big|h'(y)\big|^2\big)\\
=\, &4M_0^2\,\sum_{x\in X^0}\sum_{y: y\sim x}c(xy)\big|h(x)\big|^2+
4M_0^2 \,\sum_{x\in X^0}\sum_{y: y\sim x}c(xy)\big|h'(x)\big|^2\\
=\,&8M_0^2 \|h\|_{\ell^2(X^0,m^0)}^2+8M_0^2 \|h'\|_{\ell^2(X^0,m^0)}^2<\infty,
\end{aligned}
\end{multline*}
which shows that $F\in\Hat H^2(X^1,m^1)$.
Furthermore, for any $x\in X^0$ and $y\sim x$
one has $F(xy,0)=F_{xy}(0)=h(x)$, hence $F\in\dom S$ and $\Gamma F=h$.
Now we have $F'_{xy}(0)=h'(x)$, hence
\[
\Gamma' F(x)=\dfrac{1}{m^0(x)}\sum_{y\sim x} c(xy) F'_{xy}(0)=
\dfrac{1}{m^0(x)}\sum_{y:y\sim x} c(xy) h'(x)=h'(x),
\]
i.e. $\Gamma'F=h'$. As $h,h'$ are arbitrary, the surjectivity is proved.
As the subspace $\ker(\Gamma,\Gamma')$ contains all functions $F\in \ELL^2(X^1,m^1)$
with $F_{xy}\in C_c^\infty(0,1)$, it is dense in $\ELL^2(X^1,m^1)$.
It remains to show the validity of the identity \eqref{eq-green}. We have, for any
$F,G\in \dom S$,
\begin{multline*}
\langle F,SG\rangle_{\ELL^2(X^1,m^1)}-\langle SF,G\rangle_{\ELL^2(X^1,m^1)}
\\
\begin{aligned}
=\,&\dfrac{1}{2}\,\sum_{x\in X^0}
\sum_{y:y\sim x} c(xy)\Big[
\int_0^1F''_{xy}(t) \overline{G_{xy}(t)}dt-\int_0^1F_{xy}(t)\overline{G''_{xy}(t)}dt
\Big]\\
=\, & \dfrac{1}{2}\,\sum_{x\in X^0}
\sum_{y:y\sim x} c(xy)\Big[
F_{xy}(0)\overline{G'_{xy}(0)}-F'_{xy}(0)\overline{G_{xy}(0)}\\
&+
F'_{xy}(1)\overline{G'_{xy}(1)}-F_{xy}(1)\overline{G'_{xy}(1)}
\Big]\\
=\, &\sum_{x\in X^0}\sum_{y:y\sim x}c(xy)\Big[
F_{xy}(0)\overline{G'_{xy}(0)}-F'_{xy}(0)\overline{G_{xy}(0)}\Big]\\
=\, &\sum_{x\in X^0}F(x)\overline{\sum_{y:y\sim x} c(xy)G'_{xy}(0)}
- \sum_{x\in X^0}\sum_{y:y\sim x} c(xy)F'_{xy}(0) \overline{G(x)}\\
=\, & \sum_{x\in X^0}F(x) \overline{G'(x)} - \sum_{x\in X^0}F'(x)\overline{G(x)}\\
=\, &\big\langle \Gamma F,\Gamma'G\big\rangle_{\ell^2(X^0,m^0)}-\big\langle \Gamma' F,\Gamma G\big\rangle_{\ell^2(X^0,m^0)},
\end{aligned}
\end{multline*}
which concludes the proof.
\end{proof}

Now we have all necessary tools to prove Theorem~\ref{th1}.

\begin{proof}[Proof of Theorem \ref{th1}]
The  restriction
$H^0:=S\uhr \ker\Gamma$ is just the direct sum of the second derivative operators
with the Dirichlet boundary conditions on all edges and is thus self-adjoint.
Therefore, $S$ has a self-adjoint restriction.
As $L=S\uhr\ker\Gamma'$,  the operator $L$ is self-adjoint, and it is non-negative by direct computation (see e.g. Lemma \ref{lem6} below),
which proves (a).

The spectrum of $H^0$ coincides with the Dirichlet spectrum of the segment $[0,1]$, which is exactly
the set $\Sigma:=\big\{(\pi n)^2:n\in\NN\big\}$.
Let us now now calculate the $\gamma$-field $\gamma(z)$ and the Weyl function $M(z)$ associated with $S$
and the boundary triple constructed above.
Note that for all $\lambda\in\CC$ we have
$\Phi'(\lambda,x)\equiv d\Phi(\lambda,x)/dx=\cos (\lambda x)$.
Now let $h\in \ell^2(X^0,m^0)$ and $z\notin\Sigma$.
Consider the function $F:X^1\to\CC$ given by
\begin{equation}
     \label{eq-fxy}
F_{xy}(t)=h(x)\dfrac{\Phi(\sqrt z,1-t)}{\Phi(\sqrt z,1)}+h(y)\dfrac{\Phi(\sqrt z,t)}{\Phi(\sqrt z,1)}.
\end{equation}
By direct computation, which is almost identical to the one done in the proof of Lemma \ref{lem5},
we have $F\in \Hat H^2(X^1,m^1)$, $\Gamma F=h$
and $-F''_{xy}=zF_{xy}$ for all $x\sim y$. In other words,
$F$ belongs to $\dom S$ and $SF=zF$, hence $F=\gamma(z)h$.
Applying  the map $\Gamma'$ we obtain:
\begin{align*}
M(z)h(x)&=\Gamma'\gamma(z)h(x)=(\Gamma'F)(x)=\dfrac{1}{m^0(x)}
\sum_{y:y\sim x} c(xy)F'_{xy}(0)\\
&=\dfrac{1}{m^0(x) \Phi(\sqrt z,1)}\sum_{y:y\in x} c(xy)\Big( - h(x) \Phi'(\sqrt z,1)+h(y) \Phi'(\sqrt z,0)\Big)\\
&=\dfrac{1}{m^0(x) \Phi(\sqrt z,1)}\Big(
-m^0(x)\cos\sqrt z\,h(x)+ \dfrac{1}{m^0(x)}\sum_{y\in x} c(xy) h(y)
\Big)\\
&=\dfrac{1}{m^0(x) \Phi(\sqrt z,1)}\big(
-m^0(x)\cos\sqrt z\,h(x)+ P h(x)\big).
\end{align*}
The function $z\mapsto \Phi(\sqrt z,1)$ is holomorphic and does not vanish on $\RR\setminus\Sigma$, and we arrive at
\[
M(z)=\dfrac{P -\cos\sqrt z}{\Phi(\sqrt z,1)}.
\]
Now we remark that the function $\kappa$
is exactly the inverse of $J\ni \lambda\mapsto \cos\sqrt\lambda \in I$ and that $\kappa(J)=I$.
Therefore, the assertions (b), (c) and (d) follow from Theorem \ref{thmkp}.
\end{proof}

\subsection{The Dirichlet eigenvalues}\label{ssdir}

Theorem \ref{th1} gives the complete description of the spectrum of $L$ in terms of $P$
outside the discrete set $\big\{(\pi n)^2: n\in\NN\big\}$. To complete the spectral analysis of $L$
it remains to study the kernels $\ker(L-\pi^2n^2)$
for $n\in\NN$. Actually this problem was completely solved in \cite[Section 3]{Cat}, and
we include this subsection for the sake of completeness.

Let us introduce first some notation and recall some notions from the graph theory.
Let $E$ be the set of the \emph{ordered} pairs $xy$ with $x,y\in X^0$ and $x\sim y$; by
$\ell^2(E,c)$ we denote the Hilbert space of the functions $a:E\to\CC$ with the norm
\[
\|a\|^2_{\ell^2(E,c)}:=\sum_{xy\in E} c(xy)\big|a(xy)\big|^2<\infty.
\]

As usually, for $x\in X^0$ denote by $\delta_x$ the function on $X^0$
which is equal to one at $x$ and is equal to zero at all other points.
The network $(X,c)$ is called \emph{transient} if
\[
\sum_{n=0}^\infty \langle P^n\delta_x,\delta_y\rangle_{\ell^2(X^0,m^0)}<\infty
\text{ for some (and then for any) } x,y\in X^0;
\]
otherwise it is called \emph{recurrent}.
One may consult \cite{soardi,woess} for a further discussion of these notions.
The following results are obtained in Theorem~1 \cite{Cat}:
\begin{theorem}\label{propker}
For any $n\in\NN$ one has
$\ker(L-\pi^2n^2)= C_n (\Lambda_n) \oplus S_n (\Pi_n)$,
where
\begin{align*}
\Lambda_n:=& \big\{h\in \ell^2(X^0,m^0):\, h(x)=(-1)^n h(y) \text{ for }x\sim y \big\},\\
\Pi_n:=& \Big\{a\in \ell^2(E,c):\, a(yx)=(-1)^{n+1} a(xy) \text{ for all } xy\in E\\
&\quad 
\text{ and } \sum_{y:y\sim x}c(xy)a(xy)=0 \text{ for all } x\in X^0\Big\},
\end{align*}
and the maps $C_n:\Lambda_n\to \ELL^2(X^1,m^1)$ and $S_n:\Pi_n\to \ELL^2(X^1,m^1)$ act as
\[
(C_n h)(xy,t):= h(x)\cos(\pi n t),\quad
(S_n a)(xy,t):=a(xy)\sin(\pi nt).
\]
Moreover,
\[
\dim\Lambda_n=\begin{cases}
1, & m^0(X^0)<\infty, \quad n \text{ is odd and $X$ is bipartite},\\
1, & m^0(X^0)<\infty,\quad n \text{ is even}, \\
0, & \text{in all other cases.}
\end{cases}
\]
and $\dim \Pi_n=0$ if and only if one of the following two conditions holds:
\begin{itemize}
\item $X$ is a tree with the property that after removal of any edge
at least of the connected components is recurrent,
\item $n$ is odd, $X$ has only one cycle which is odd, and $(X,c)$ is recurrent.
\end{itemize}
\end{theorem}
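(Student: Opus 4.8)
The plan is to reduce the eigenvalue equation $LF=\pi^2n^2F$ to two decoupled problems on the combinatorial graph by integrating it edge by edge, and then to compute the dimensions of the two resulting spaces separately. First I would observe that any $F\in\ker(L-\pi^2n^2)$ restricts on each edge to a solution of $-F''_{xy}=\pi^2n^2F_{xy}$, so that $F_{xy}(t)=\alpha_{xy}\cos(\pi n t)+\beta_{xy}\sin(\pi n t)$ for some $\alpha_{xy},\beta_{xy}\in\CC$. Imposing the identification $F(xy,t)=F(yx,1-t)$ and using $\cos\big(\pi n(1-t)\big)=(-1)^n\cos(\pi n t)$ together with $\sin\big(\pi n(1-t)\big)=-(-1)^n\sin(\pi n t)$ forces $\alpha_{xy}=(-1)^n\alpha_{yx}$ and $\beta_{xy}=(-1)^{n+1}\beta_{yx}$. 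The vertex continuity \eqref{eq-fcont} then makes $\alpha_{xy}$ independent of the second index, so writing $h(x):=\alpha_{xy}=F(x)$ yields $h(x)=(-1)^nh(y)$ for $x\sim y$, the defining relation of $\Lambda_n$; since $F'_{xy}(0+)=\pi n\,\beta_{xy}$, the Kirchhoff condition \eqref{eq-dder} becomes $\sum_{y:y\sim x}c(xy)\beta_{xy}=0$, so that $a(xy):=\beta_{xy}$ lies in $\Pi_n$. Conversely, every admissible pair $(h,a)$ plainly produces such an eigenfunction $C_nh+S_na$.

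Next I would establish the orthogonal decomposition through a norm computation. Using $\int_0^1\cos(\pi n t)\sin(\pi n t)\,dt=0$ and $\|\cos(\pi n\cdot)\|^2_{\ELL^2(0,1)}=\|\sin(\pi n\cdot)\|^2_{\ELL^2(0,1)}=\tfrac12$, one obtains
\[
\|C_nh+S_na\|^2_{\ELL^2(X^1,m^1)}=\tfrac14\Big(\|h\|^2_{\ell^2(X^0,m^0)}+\|a\|^2_{\ell^2(E,c)}\Big).
\]
This single identity shows at once that $C_n(\Lambda_n)$ and $S_n(\Pi_n)$ are mutually orthogonal, that membership $F\in\ELL^2(X^1,m^1)$ is equivalent to $h\in\ell^2(X^0,m^0)$ and $a\in\ell^2(E,c)$ (and, since each $F_{xy}$ is a trigonometric polynomial, equivalent to $F\in\Hat H^2(X^1,m^1)$), and that $C_n$ and $S_n$ are topological isomorphisms onto their images. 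Combined with the reduction above, this gives $\ker(L-\pi^2n^2)=C_n(\Lambda_n)\oplus S_n(\Pi_n)$.

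The dimension of $\Lambda_n$ is then elementary: connectedness of $X$ forces $|h|$ to be constant, so for even $n$ the function $h$ is constant and lies in $\ell^2(X^0,m^0)$ only when $m^0(X^0)<\infty$, while for odd $n$ the required sign-alternation of $h$ is consistent around all cycles only if $X$ is bipartite, and such a nonzero $h$ again belongs to $\ell^2$ only if $m^0(X^0)<\infty$; this reproduces the stated values.

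The hard part is the dimension of $\Pi_n$. For even $n$ the relation $a(yx)=-a(xy)$ turns $a$ into an antisymmetric finite-energy flow and $\sum_{y:y\sim x}c(xy)a(xy)=0$ into the weighted Kirchhoff law, so $\Pi_n$ is the $\ell^2$ cycle space; for odd $n$ the flow $a$ is symmetric, and following the forced sign pattern around a cycle shows that odd cycles carry no nonzero symmetric flow whereas even cycles do. The genuinely delicate phenomenon is that, on an infinite graph, nonzero finite-energy flows may run between distinct ends even in the absence of cycles. I would control these via the transience/recurrence dichotomy: a divergence-free $\ell^2$ flow with a nonzero value on an edge $e$ would, after removal of $e$, split into finite-energy flows escaping to infinity in the two resulting components, which is possible only if both components are transient. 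Carrying this analysis through, together with the parity-dependent cycle count, isolates exactly the two stated conditions under which $\dim\Pi_n=0$. This flow-theoretic step---precisely the content of \cite[Section~3]{Cat}---is the one I expect to be the most demanding.
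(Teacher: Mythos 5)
Note first that the paper itself does not prove this theorem: it reproduces it from Cattaneo \cite[Theorem~1 and Section~3]{Cat} ``for the sake of completeness,'' so the comparison is really with that source, and your reconstruction follows the same route. Your reduction --- solving $-F''_{xy}=\pi^2n^2F_{xy}$ edgewise, using the parities of $\cos\big(\pi n(1-t)\big)$ and $\sin\big(\pi n(1-t)\big)$ under $F(xy,t)=F(yx,1-t)$ to force $\alpha_{xy}=(-1)^n\alpha_{yx}$, $\beta_{xy}=(-1)^{n+1}\beta_{yx}$, and translating \eqref{eq-fcont}, \eqref{eq-dder} into the defining relations of $\Lambda_n$ and $\Pi_n$ --- is complete and correct, as are the Pythagorean norm identity and the count of $\dim\Lambda_n$. (One point you leave implicit: for $a\in\ell^2(E,c)$ the sum in \eqref{eq-fder} converges absolutely by Cauchy--Schwarz and \eqref{eq-m0}, so the condition $\sum_{y:y\sim x}c(xy)a(xy)=0$ is well posed.) For $\dim\Pi_n$ you sketch and defer to \cite{Cat}, exactly as the paper does, and the mechanism you invoke (finite-energy flows to infinity exist iff the component is transient) is the right one. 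One caution: the remark ``odd cycles carry no nonzero symmetric flow whereas even cycles do'' is too coarse to yield the odd-$n$ condition as stated. Two odd cycles joined by a path \emph{do} support a nonzero symmetric Kirchhoff flow (the sign defect of each odd cycle is routed along the connecting path), which is why the theorem demands exactly one odd cycle; and in that unicyclic case one must additionally exclude flows to infinity using recurrence of the whole network. Since you explicitly flag this step as the content of \cite[Section~3]{Cat}, this is an imprecision in the sketch rather than an error in what you actually prove.
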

An elementary analysis leads to the following observation:
\begin{corol}\label{corol13} The following two conditions are equivalent:
\begin{itemize}
\item $\bigcup_{n=1}^\infty \ker(L-\pi^2 n^2)=\{0\}$,
\item $m^0(X^0)<\infty$ and
$X$ is a tree with the property that after removal of any edge
at least of the connected components is recurrent.
\end{itemize}
\end{corol}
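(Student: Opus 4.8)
The plan is to deduce everything from the explicit kernel description in Theorem~\ref{propker}. Since $\bigcup_{n=1}^\infty\ker(L-\pi^2n^2)=\{0\}$ is equivalent to the vanishing of every single kernel, and since each kernel splits as the internal direct sum $C_n(\Lambda_n)\oplus S_n(\Pi_n)$ with $C_n$ and $S_n$ injective, the first condition reduces to the simultaneous requirement $\Lambda_n=\{0\}$ and $\Pi_n=\{0\}$ for all $n\in\NN$. I would then treat the two families separately.

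For the spaces $\Lambda_n$ the dimension table in Theorem~\ref{propker} gives a clean dichotomy: whenever $m^0(X^0)<\infty$ one has $\dim\Lambda_n=1$ for every even $n$ (and also for odd $n$ on bipartite graphs), whereas $m^0(X^0)=\infty$ forces $\dim\Lambda_n=0$ for every $n$ of either parity. Because even values of $n$ are always present, the vanishing of all $\Lambda_n$ is therefore equivalent to the single condition $m^0(X^0)=\infty$.

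For the spaces $\Pi_n$ I would use that, by Theorem~\ref{propker}, $\dim\Pi_n=0$ can occur only through the two listed mechanisms, the second of which (a unique odd cycle together with recurrence) is available for odd $n$ only. Hence for even $n$ the only route to $\Pi_n=\{0\}$ is the first mechanism, i.e. $X$ a tree with the stated edge-removal/recurrence property; conversely such a tree makes the first mechanism applicable to every $n$ at once. This parity bookkeeping is the genuinely delicate step of the argument, and it yields that the vanishing of all $\Pi_n$ is equivalent to the tree-removal condition.

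Combining the two equivalences, $\bigcup_{n=1}^\infty\ker(L-\pi^2n^2)=\{0\}$ holds if and only if $m^0(X^0)=\infty$ \emph{and} $X$ is a tree with the removal property. I should emphasize that this is \emph{not} the measure condition printed in the statement: the $\Lambda_n$ analysis unambiguously produces $m^0(X^0)=\infty$, whereas the corollary as typeset asks for $m^0(X^0)<\infty$. The principal difficulty here lies therefore not in the proof, which is a routine consequence of Theorem~\ref{propker}, but in the wording --- the finiteness requirement is a misprint and must be replaced by $m^0(X^0)=\infty$. This corrected value is precisely the one demanded by the later corollary on $A$, where (using $\ker A=\bigcup_n\ker(L-\pi^2n^2)$) the point $0$ is excluded from $\spec_\mathrm{p}A$ under $m^0(X^0)=\infty$ together with the very same tree condition. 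With this correction the asserted equivalence follows immediately from the $\Lambda_n$ and $\Pi_n$ analyses above.
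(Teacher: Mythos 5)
Your proof is correct and follows exactly the route the paper intends --- the paper prints no argument at all, introducing the corollary only with the words ``an elementary analysis leads to the following observation'', and that elementary analysis is precisely your reduction via Theorem~\ref{propker}: injectivity of $C_n$ and $S_n$ turns the first bullet into $\Lambda_n=\Pi_n=\{0\}$ for all $n\in\NN$; the even $n$ then force $m^0(X^0)=\infty$ through the dimension table for $\Lambda_n$, and force the tree/edge-removal condition through $\Pi_n$, since the second vanishing mechanism for $\Pi_n$ is available only for odd $n$; conversely, these two conditions annihilate every $\Lambda_n$ and every $\Pi_n$. You are also right to flag the misprint: because $\dim\Lambda_n=1$ for even $n$ exactly when $m^0(X^0)<\infty$ (the constant functions then belong to $\ell^2(X^0,m^0)$), the condition $m^0(X^0)<\infty$ as typeset is incompatible with the first bullet, and it must read $m^0(X^0)=\infty$. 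The corrected form is moreover the one the paper actually uses: the unnumbered corollary following Corollary~\ref{corol1} combines Corollary~\ref{corol13} with $\ker A=\bigcup_{n\ge 1}\ker(L-\pi^2n^2)$ to conclude that $0\notin\spec_{\mathrm{p}}A$ precisely when $m^0(X^0)=\infty$ and the tree condition holds, in agreement with Proposition~\ref{prop2} quoted from \cite{CW}.
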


\section{Fourier-type expansions associated with $L$}\label{sec3}

We recall first some basic notions concerning Fourier-type expansions associated with self-adjoint
operators following \cite{PGW}. Let $\cH$ be a separable Hilbert space with the scalar product $\langle \cdot,\cdot\rangle$
and $T\ge 1$ be a self-adjoint operator $\cH$. Define two auxiliary Hilbert spaces $\cH_\pm=\cH_\pm(T)$ as follows:
by $\cH_+$ we denote the domain $\dom T$ equipped with the scalar product $\langle f,g\rangle_+=\langle T f,Tg\rangle$,
and $\cH_-$ will be the completion of $\cH$ with respect to the scalar product
$\langle f,g\rangle_-:=\langle T^{-1}f,T^{-1}g\rangle$. The scalar product on $\cH$
can be then naturally extended to a sesquilinear map $\langle\cdot, \cdot\rangle:\cH_+\times \cH_-\to\CC$.

\begin{defin}\label{defin2}
Let $N$ be a positive integer or infinity, $H$ be a self-adjoint operator in $\cH$
and $\mu$ be a spectral measure for $H$. A sequence of subsets $M_j\subset\RR$, $j=1,\dots,N$,
with $M_j\supset M_{j+1}$ together with a unitary map
\[
U=(U_j):\cH\to \bigoplus_{j=1}^N L^2(M_j,d\mu)
\]
is said to be an \emph{ordered spectral representation} of $H$ if $U \varphi(H)=M_\varphi U$
for every measurable function $\varphi$ on $\RR$. Here and below $M_\varphi$ denotes the operator of multiplication
by $\varphi$ in $L^2(M,d\mu)$. 

We say that measurable functions
\[
\varphi_j:M_j\to\cH_-, \quad M_j\ni\lambda\mapsto \varphi_{j,\lambda}\in\cH_-, \quad
j=1,\dots,N,
\]
realize a \emph{Fourier-type expansion} for $H$ if the following conditions are satisfied:
\begin{enumerate}
\item[(a)] $ U_j f(\lambda)=\big\langle f,\varphi_{j,\lambda}\big\rangle$ for $\mu$-a.e. $\lambda$ and for all $f\in\cH_+$,
\item[(b)] For every $g=(g_j)\in\bigoplus_{j=1}^N L^2(M_j,d\mu)$ there holds
\[
Ug=\lim_{\substack{n\to N\\E\to\infty}} \sum_{j=1}^n \int_{M_j\cap(-E,E)} g_j(\lambda) \varphi_{j,\lambda}\, d\mu(\lambda).
\]
\item[(c)] For every $f\in\cH$ there holds
\[
f=\lim_{\substack{n\to N\\E\to\infty}} \sum_{j=1}^n \int_{M_j\cap(-E,E)} (U_j f)(\lambda) \varphi_{j,\lambda}\, d\mu(\lambda),
\]
\item[(d)] for any vector $f\in \big\{ g\in \cH_+\mathop{\cap}\dom H: Hg\in \cH_+\big\}$ one has
$\langle Hf,\varphi_{j,\lambda}\rangle=\lambda \langle f, \varphi_{j,\lambda}\rangle$. \qed
\end{enumerate}
\end{defin}
We note that an ordered spectral representation always exists and is unique up to renumbering,
see e.g.~\cite[Chapter~8]{weid},
and Fourier-type expansions can be constructed using the following result:
\begin{prop}[Theorem 1 in \cite{PGW}]\label{thm-gf}
Assume that there exists a continuous bounded function $\gamma:\RR\to \CC$
with $|\gamma|>0$ on $\spec H$ such that $\gamma(H) T^{-1}$ is a Hilbert-Schmidt operator.
Then there exist measurable functions $\varphi_j:M_j\to\cH_-$, $\lambda\mapsto \varphi_{j,\lambda}$,
$j=1,\dots,N$, realizing a Fourier-type expansion for $H$.
\end{prop}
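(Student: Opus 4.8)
The plan is to realize the functions $\varphi_{j,\lambda}$ as the ``columns'' of the integral kernel of a suitable Hilbert--Schmidt map, in the spirit of the nuclear spectral theorem. Write $B:=\gamma(H)T^{-1}$, which is Hilbert--Schmidt on $\cH$ by hypothesis. Since $T\ge 1$, the operator $T^{-1}$ is bounded on $\cH$, the inclusions $\cH_+\subset\cH\subset\cH_-$ are continuous, and $T$ extends from a unitary $\cH_+\to\cH$ to a unitary $\cH\to\cH_-$, still denoted $T$; the duality pairing then satisfies $\langle g,\varphi\rangle=\langle Tg,T^{-1}\varphi\rangle$ for $g\in\cH_+$, $\varphi\in\cH_-$. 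Because $U$ is an ordered spectral representation, it diagonalizes every bounded function of $H$, so $U_jB f=\gamma\cdot(U_jT^{-1}f)$ in $L^2(M_j,d\mu)$; and since $U$ is unitary, each component $U_jB:\cH\to L^2(M_j,d\mu)$ inherits the Hilbert--Schmidt property from $B$.

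First I would record the kernel representation of a Hilbert--Schmidt map into $L^2$. Fixing an orthonormal basis $(e_n)$ of $\cH$ and setting
\[
w^{(j)}_\lambda:=\sum_n \overline{(U_jB\,e_n)(\lambda)}\,e_n\in\cH,
\]
one checks that $\int_{M_j}\|w^{(j)}_\lambda\|^2\,d\mu(\lambda)=\|U_jB\|_{\mathrm{HS}}^2<\infty$, that $\lambda\mapsto w^{(j)}_\lambda$ is a $\mu$-a.e.\ defined measurable $\cH$-valued map, and that $(U_jB f)(\lambda)=\langle f,w^{(j)}_\lambda\rangle$ for all $f\in\cH$ and $\mu$-a.e.\ $\lambda$. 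Since $|\gamma|>0$ on $\spec H\supset\supp\mu$, we may divide by $\gamma$ and define
\[
\varphi_{j,\lambda}:=\overline{\gamma(\lambda)}^{-1}\,T\,w^{(j)}_\lambda\in\cH_-,
\]
a measurable $\cH_-$-valued map because $T:\cH\to\cH_-$ is bounded. Condition~(a) now follows directly: for $g\in\cH_+$ put $f=Tg\in\cH$, so that $U_jg=\gamma^{-1}(U_jBf)$ and hence, $\mu$-a.e.,
\[
U_jg(\lambda)=\gamma(\lambda)^{-1}\langle Tg,w^{(j)}_\lambda\rangle=\langle Tg,T^{-1}\varphi_{j,\lambda}\rangle=\langle g,\varphi_{j,\lambda}\rangle .
\]

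With~(a) in hand the remaining items are largely formal. For~(d), if $f$ and $Hf$ both lie in $\cH_+$, then applying~(a) to $Hf$ and using $U_j(Hf)=\lambda\,U_jf$ (the defining property of the spectral representation) gives $\langle Hf,\varphi_{j,\lambda}\rangle=U_j(Hf)(\lambda)=\lambda\,U_jf(\lambda)=\lambda\langle f,\varphi_{j,\lambda}\rangle$ for $\mu$-a.e.\ $\lambda$. Items~(b) and~(c) express the inverse $U^{-1}=U^*$ as integration against the $\varphi_{j,\lambda}$ (and~(c) is just~(b) at $g=Uf$); I would first establish them weakly, testing against the dense subspace $\cH_+$: for $h\in\cH_+$,
\[
\Big\langle\sum_{j=1}^n\int_{M_j\cap(-E,E)}g_j(\lambda)\varphi_{j,\lambda}\,d\mu,\,h\Big\rangle=\sum_{j=1}^n\int_{M_j\cap(-E,E)}g_j(\lambda)\,\overline{\langle h,\varphi_{j,\lambda}\rangle}\,d\mu,
\]
which by~(a) is the corresponding truncation of $\langle g,Uh\rangle=\langle U^*g,h\rangle$; letting $n\to N$ and $E\to\infty$ identifies the weak limit with $U^*g$.

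The main obstacle is to upgrade these weak (tested-against-$\cH_+$) identities to genuine convergence in the norm of $\cH$ in~(b) and~(c), together with the measure-theoretic bookkeeping: selecting one $\mu$-null set off which $(U_jBf)(\lambda)=\langle f,w^{(j)}_\lambda\rangle$ holds simultaneously for all $f$ (handled through the countable basis $(e_n)$), and checking joint measurability of $\lambda\mapsto\varphi_{j,\lambda}$. Norm convergence is obtained from the isometry of $U$ combined with the square-integrability $\int\|w^{(j)}_\lambda\|^2\,d\mu<\infty$, which control the $\cH$-valued integrals and their tails as the cutoffs are removed. This is precisely the step where the Hilbert--Schmidt hypothesis on $\gamma(H)T^{-1}$ is indispensable, since $T^{-1}$ alone need not be Hilbert--Schmidt and the pointwise functionals $f\mapsto U_jf(\lambda)$ would otherwise fail to be well defined.
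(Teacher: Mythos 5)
The paper offers no proof of this statement for you to be compared against: it is quoted, with attribution, as Theorem~1 of \cite{PGW} and is used as a black box (in Lemma~\ref{prop4}). So your proposal can only be judged against the original argument of Poerschke--Stolz--Weidmann, and it is in fact a faithful reconstruction of it. Your two main ideas are exactly theirs: a Hilbert--Schmidt operator into $\bigoplus_j L^2(M_j,d\mu)$ admits a square-integrable $\cH$-valued Carleman kernel $(w^{(j)}_\lambda)$, built from an orthonormal basis; and the generalized eigenfunctions are recovered by undoing the two regularizations, $\varphi_{j,\lambda}:=\overline{\gamma(\lambda)}^{-1}\,T\,w^{(j)}_\lambda$, using that $T$ extends to a unitary map $\cH\to\cH_-$ and that $\gamma\neq 0$ $\mu$-a.e.\ because $\mu$ is concentrated on $\spec H$. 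Your verifications of items (a) and (d) are complete, and your reading of item (b) (the ``$Ug=\dots$'' in Definition~\ref{defin2} must be $U^{-1}g=U^*g$) is the correct one.

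The only step you sketch rather than prove is the upgrade from the weakly tested identity to $\cH_-$-norm convergence in (b) and (c); it is fillable, but two details deserve to be made explicit. First, for the truncated integrals to exist as Bochner integrals in $\cH_-$ you need $\int_{M_j\cap(-E,E)}|g_j(\lambda)|\,\|\varphi_{j,\lambda}\|_-\,d\mu<\infty$, and since $\|\varphi_{j,\lambda}\|_-=|\gamma(\lambda)|^{-1}\|w^{(j)}_\lambda\|_{\cH}$ this uses not only $\int\|w^{(j)}_\lambda\|^2_{\cH}\,d\mu<\infty$ but also that $|\gamma|$ is bounded below on the compact set $\spec H\cap[-E,E]$ (continuity of $\gamma$); square-integrability of the kernel alone does not suffice where $\gamma$ is small. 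Second, for the tails the clean mechanism is duality: since $T^{-1}\varphi_{j,\lambda}=\overline{\gamma(\lambda)}^{-1}w^{(j)}_\lambda$ and, for each fixed $f\in\cH$, one has $\overline{\gamma(\lambda)}^{-1}\langle w^{(j)}_\lambda,f\rangle=\overline{(U_jT^{-1}f)(\lambda)}$ for $\mu$-a.e.\ $\lambda$, it follows that
\[
\Big\|\sum_{j}\int_{\mathrm{tail}}g_j(\lambda)\,\varphi_{j,\lambda}\,d\mu\Big\|_-
=\sup_{\|f\|_{\cH}\le 1}\Big|\sum_j\int_{\mathrm{tail}}g_j(\lambda)\,\overline{(U_jT^{-1}f)(\lambda)}\,d\mu\Big|
\le\|T^{-1}\|\,\big\|g\,1_{\mathrm{tail}}\big\|\longrightarrow 0
\]
as the cutoffs are removed (Cauchy--Schwarz, the isometry of $U$ and the boundedness of $T^{-1}$), which identifies the limit with $U^*g$ and gives (c) upon taking $g=Uf$. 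With these two points spelled out, your proposal amounts to a complete proof of the cited result.
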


Let as apply the above machinery to the operator $L$. From now on let $\mu$ be its spectral measure
and $(M_j,U_j)$ an ordered spectral representation.

Let us prove first some additional mapping properties of $L$. Introduce the Sobolev space
\[
H^1(X^1,m^1):=\big\{
F\in \Hat H^1(X^1,m^1): \, \text{Eq. \eqref{eq-fcont} holds}
\big\}.
\]
Using the estimate \eqref{eq-sob2} and proceeding as in Lemma~\ref{lem4}
it is easy to see that $H^1(X^1,m^1)$
becomes a Hilbert space if equiped with the scalar product
\[
\langle F,G\rangle_{H^1(X^1,m^1)}:=\langle F',G'\rangle_{\ELL^2(X^1,m^1)}+
\langle F,G\rangle_{\ELL^2(X^1,m^1)}.
\] The following proposition is well known
for the locally finite graphs, see e.g.~\cite[Subsection~1.4.3]{BK}; we include the proof for the sake of completeness.
\begin{lemma}\label{lem6}
In $\ELL^2(X^1,m^1)$ consider a sesquilinear form $q$
defined on $H^1(X^1,m^1)$ by
$q(F,G)=\langle F',G'\rangle_{\ELL^2(X^1,m^1)}$. Then $q$ is closed and non-negative, and
$L$ is the self-adjoint operator associated with the form $q$.
\end{lemma}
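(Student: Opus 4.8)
The plan is to show that the form $q(F,G)=\langle F',G'\rangle_{\ELL^2(X^1,m^1)}$ is closed and non-negative on $H^1(X^1,m^1)$, and then to identify the operator it generates with $L$. Non-negativity is immediate since $q(F,F)=\|F'\|_{\ELL^2(X^1,m^1)}^2\ge 0$. For closedness, I would observe that the form norm $\sqrt{q(F,F)+\|F\|_{\ELL^2(X^1,m^1)}^2}$ coincides exactly with the $H^1(X^1,m^1)$-norm introduced just above the statement, so closedness of $q$ is equivalent to the completeness of $H^1(X^1,m^1)$ under that norm. The remark preceding the lemma already asserts that $H^1(X^1,m^1)$ is a Hilbert space (via the estimate \eqref{eq-sob2} and the argument of Lemma~\ref{lem4}), so this step is essentially cited: the continuity estimate \eqref{eq-sob2} makes the vertex-continuity functionals $F\mapsto F(xu,0)-F(xv,0)$ bounded in the graph norm of $\Tilde H^1$, hence their common kernel $H^1(X^1,m^1)$ is a closed subspace of a complete space and thus itself complete.

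Next I would identify the operator $L_q$ associated with $q$. By the standard representation theorem for closed non-negative forms, $L_q$ is the self-adjoint operator with $\dom L_q=\{F\in H^1(X^1,m^1): \exists\, w\in\ELL^2(X^1,m^1) \text{ s.t. } q(F,G)=\langle w,G\rangle \text{ for all } G\in H^1(X^1,m^1)\}$ and $L_q F=w$. The task is to show $\dom L_q=\dom L$ and $L_q F=-F''$. I would take $F\in\dom L_q$, test first against $G$ with components in $C_c^\infty(0,1)$ on a single edge: this forces, in the distributional sense, $F_{xy}''=-w_{xy}\in\ELL^2(0,1)$ on each edge, so $F\in\Hat H^2(X^1,m^1)$ and $w=-F''$. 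Hence $F\in\dom S$ (it already satisfies \eqref{eq-fcont} by membership in $H^1(X^1,m^1)$).

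To pin down the boundary condition, I would integrate by parts edge-by-edge for general $G\in H^1(X^1,m^1)$. Using $q(F,G)=\langle -F'',G\rangle$ together with the summation-by-parts identity, the boundary terms collapse: writing out $\tfrac12\sum_x\sum_{y\sim x}c(xy)\big[F_{xy}'(t)\overline{G_{xy}(t)}\big]_0^1$ and exploiting the edge-doubling together with continuity $G_{xy}(0)=G(x)$ from \eqref{eq-fcont}, the surviving boundary contribution is $\sum_{x\in X^0}F'(x)\overline{G(x)}$ with $F'(x)$ as in \eqref{eq-fder}. Since this must vanish for all admissible $G$ and the trace map $G\mapsto(G(x))_x$ is surjective onto $\ell^2(X^0,m^0)$ (established in Lemma~\ref{lem5}), I conclude $F'(x)=0$ for every $x$, i.e. \eqref{eq-dder} holds and $F\in\dom L$. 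The reverse inclusion $\dom L\subset\dom L_q$ follows by running the same integration by parts in the opposite direction.

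The main obstacle I anticipate is the justification of the termwise integration by parts and the rearrangement of the double sum over edges in the infinite, non-locally-finite setting: one must argue that all the edgewise boundary terms are absolutely summable before collapsing them, and that the boundary functional genuinely factors through the surjective vertex-trace. This is exactly where the Cauchy--Schwarz estimate and the Sobolev inequality \eqref{eq-sob} (as used in Lemma~\ref{lem5} to bound $\Gamma'F$ in $\ell^2(X^0,m^0)$) are invoked to control $\sum_{x}|F'(x)|^2/m^0(x)$ and hence to pair legitimately against $G(x)\in\ell^2(X^0,m^0)$; the convergence of every intermediate sum must be checked rather than taken for granted.
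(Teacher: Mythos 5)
Your proposal is correct, and its first half coincides with the paper's: closedness is reduced to completeness of $H^1(X^1,m^1)$, and testing against $C_c^\infty$ functions on a single edge identifies the form operator (your $L_q$, the paper's $T$) with $-F''$ edge by edge, placing its domain inside $\dom S$. From there, however, the two arguments genuinely diverge. The paper localizes: for a fixed vertex $x$ it tests only against functions supported on the star of $x$ and vanishing at the far endpoints, integration by parts yields $\sum_{y\sim x}c(xy)F'(xy,0)\,\overline{G(xy,0)}=0$, and one explicit choice ($G_{xy}=\varphi$ for all $y\sim x$ with $\varphi(0)=1$, $\varphi(1)=0$) gives \eqref{eq-dder} at $x$; the only infinite sum involved runs over the neighbours of a single vertex, and its convergence comes for free from the convergence of the two $\ELL^2$ pairings it is squeezed between. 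The paper then concludes by maximality: $T$ is a restriction of $L$, both are self-adjoint (self-adjointness of $L$ having been proved beforehand in Theorem~\ref{th1}(a) via boundary triples), hence $T=L$ --- no reverse inclusion is ever proved. You instead perform a global summation by parts against arbitrary $G\in H^1(X^1,m^1)$, collapse the boundary terms to $\sum_x F'(x)\overline{G(x)}=\langle\Gamma' F,\Gamma G\rangle_{\ell^2(X^0,m^0)}$, invoke surjectivity of the vertex trace (legitimately extracted from Lemma~\ref{lem5}, since $\dom S\subset H^1(X^1,m^1)$) to get \eqref{eq-dder} at every vertex simultaneously, and then prove the reverse inclusion $\dom L\subset\dom L_q$ by running the same computation with \eqref{eq-dder} killing the boundary term. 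Both routes are sound. What yours buys: it never uses the self-adjointness of $L$, so it re-derives that part of Theorem~\ref{th1}(a) as a by-product of the form representation theorem, needing from Lemma~\ref{lem5} only the surjectivity of $\Gamma$ and the $\ell^2$-bounds on $\Gamma F$ and $\Gamma' F$. What it costs is exactly the bookkeeping you flag: absolute convergence of the double sum of edge boundary terms (via \eqref{eq-sob}, \eqref{eq-sob2} and Cauchy--Schwarz, as in the proof of Lemma~\ref{lem5}) must be verified before regrouping by vertex, a global rearrangement that the paper's localization sidesteps entirely.
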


\begin{proof}
The inequality $q(F,F)\ge 0$ is obvious, and the closedness is equivalent to the completeness of  $H^1(X^1,m^1)$.
Now let $T$ be the self-adjoint operator associated with $q$ and let $F\in \dom T$.
Take any $\varphi\in C^\infty_c(0,1)$ and any edge $[x,y]$ and consider a function $G$ on $X^1$ defined by
$G_{xy}:=\varphi$ and $G_{uv}:=0$ for $uv\notin\{xy,yx\}$. Clearly, $G$ belongs to $H^1(X^1,m^1)$, and we have
$q(F,G)=\langle TF, G\rangle_{\ELL^2(X^1,m^1)}$, which can be rewritten as
\[
\int_0^1 F'_{xy}(t) \overline{ \varphi'(t)}dt= \int_0^1 (TF)_{xy}(t)\overline{\varphi(t)}\,dt.
\]
As this holds for an arbitrary $\varphi\in C^\infty_c(0,1)$,
we have $F_{xy}\in H^2(0,1)$ and $(TF)_{xy}=-F''_{xy}$.
As $TF\in \ELL^2(X^1,m^1)$, we obtain the inclusion $\dom T\subset \Hat H^2(X^1,m^1)$,
and $F$ automatically satisfies the continuity condition~\eqref{eq-fcont} as it belongs to $H^1(X^1,m^1)$.

Now pick any $x\in X^0$ and denote by
$\ELL_x$ the set of the functions $G\in \Hat H^2(X^1,m^1)\cap H^1(X^1,m^1)$
such that $G_{uv}=0$ for $x\notin \{u,v\}$ and that $G(xy,1)=0$ for all
$y\sim x$. For $G\in \ELL_x$ we have again $\langle F',G'\rangle_{\ELL^2(X^1,m^1)}=-\langle F'',G\rangle_{\ELL^2(X^1,m^1)}$, which
 reads as
\[    
\sum_{y:y\sim x} c(xy)\int_0^1 F'_{xy}(t)\overline{ G'_{xy}(t)}\, dt=
-\sum_{y:y\sim x} c(xy)\int_0^1 F''_{xy}(t)\overline{ G_{xy}(t)}\, dt.
\]
Using the integration by parts we obtain
\begin{equation}
      \label{eq-fpg}
\sum_{y:y\sim x} c(xy)F'(xy,0) G(xy,0)=0.
\end{equation}
Now take $\varphi\in H^2(0,1)$ with $\varphi(0)=1$ and $\varphi(1)=0$ and define
a function $G$ on $X^1$ by $G_{xy}:=\varphi$ for $y\sim x$ and $G_{uv}:=0$ for $uv\notin\{xy,yx\}$,
then $G\in \ELL_x$ and $G(xy,0)=1$ for all $y\sim x$. Substituting this function $G$
into \eqref{eq-fpg} shows that $F$ satisfies the remaining boundary condition \eqref{eq-dder}.

The preceding constructions show that $T$ is a restriction of $L$. As both $T$ and $L$ are self-adjoint,
we have $T=L$.
\end{proof}

\begin{lemma}\label{prop4}
Let a function $w:X^1\to (0,+\infty)$ be such that $w(xy,t)\ge \sqrt{c(xy)}$
for $m^1$-a.e. $(xy,t)\in X^1$ and that $1/w\in \ELL^2(X^1,m^1)$.
Then there exists a Fourier-type expansion $(\Psi_{j})$ for $L$ with 
\begin{equation}
        \label{eq-hhh}
\cH_+=\ELL^2\Big(X^1,\dfrac{w^2}{c}\,m^1\Big), \quad
\cH_-=\ELL^2\Big(X^1,\dfrac{c}{w^2}\,m^1\Big).
\end{equation}
Here by $c:X^1\to\RR$ we mean the function given by $c(xy,t):=c(xy)$.
\end{lemma}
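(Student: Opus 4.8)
The plan is to apply Proposition~\ref{thm-gf} (Theorem~1 in \cite{PGW}) with a judiciously chosen weight function $\gamma$, and to identify the spaces $\cH_\pm$ with the weighted $\ELL^2$-spaces in \eqref{eq-hhh}. Since Proposition~\ref{thm-gf} requires a reference operator $T\ge 1$ generating the spaces $\cH_\pm=\cH_\pm(T)$, first I would introduce the multiplication operator $W$ acting as $(WF)(xy,t)=w(xy,t)F(xy,t)$ and divided by $\sqrt{c(xy)}$, more precisely the operator of multiplication by $w/\sqrt c$, which I will call $T$. The hypothesis $w(xy,t)\ge\sqrt{c(xy)}$ guarantees $T\ge 1$, so $T$ is an admissible reference operator. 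With this choice, the space $\cH_+=\dom T$ equipped with $\langle f,g\rangle_+=\langle Tf,Tg\rangle$ is exactly $\ELL^2\big(X^1,(w^2/c)\,m^1\big)$, because the weight $(w/\sqrt c)^2\,c=w^2$ against the base measure $c\,dt$ on each edge reproduces the measure $(w^2/c)\,m^1$; dually, $\cH_-$, the completion of $\cH$ under $\langle T^{-1}\cdot,T^{-1}\cdot\rangle$, becomes $\ELL^2\big(X^1,(c/w^2)\,m^1\big)$. I would verify these identifications by a direct computation of norms on each edge and summing over $E$.

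The substance of the proof is to check the single hypothesis of Proposition~\ref{thm-gf}: that there is a continuous bounded $\gamma:\RR\to\CC$, strictly positive on $\spec L$, with $\gamma(L)\,T^{-1}$ Hilbert--Schmidt. I would take $\gamma(\lambda)=(\lambda+1)^{-1}$, which is bounded, continuous, and positive on $\spec L\subset[0,\infty)$ by Theorem~\ref{th1}(a). Thus it remains to show that $(L+1)^{-1}T^{-1}$ is Hilbert--Schmidt. The key step is to obtain a kernel representation or a norm estimate for $(L+1)^{-1}$ and to exploit that $T^{-1}$ is multiplication by $\sqrt c/w$. Here the assumption $1/w\in\ELL^2(X^1,m^1)$ enters decisively: writing $T^{-1}$ as multiplication by $\sqrt c/w$, the operator $T^{-1}$ applied after the smoothing resolvent $(L+1)^{-1}$ should land us in the Hilbert--Schmidt class precisely because $\sqrt c/w$ is, up to the conductance weights, square-integrable.

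The main obstacle, and the technical heart of the argument, is to quantify the smoothing of $(L+1)^{-1}$ in a way that is uniform over the (possibly non-locally-finite) edge set and then to combine it with the integrability of $1/w$. I expect to argue as follows. Since $(L+1)^{-1}$ maps $\ELL^2(X^1,m^1)$ into $\dom L\subset \Hat H^2(X^1,m^1)$, the Sobolev inequality \eqref{eq-sob} provides a uniform bound of the sup-norm of each component $F_{xy}$ and of $F'_{xy}$ by the graph norm of $L$. Combining \eqref{eq-sob} with the spectral inequality $\|F\|^2_{\ELL^2}+\|F''\|^2_{\ELL^2}\le \text{const}\cdot\|(L+1)F\|^2_{\ELL^2}$ yields a bound showing that $(L+1)^{-1}$ composed with the pointwise evaluation maps into a controllable space. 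To estimate the Hilbert--Schmidt norm of $(L+1)^{-1}T^{-1}$ I would compute $\sum$ over an orthonormal basis, or equivalently bound $\|(L+1)^{-1}T^{-1}\|^2_{\mathrm{HS}}$ by integrating the squared norm of the resolvent kernel against $(c/w^2)\,m^1$; the uniform Sobolev control of the resolvent over a unit edge makes the per-edge contribution proportional to $\int_0^1 (c(xy)/w(xy,t)^2)\,c(xy)\,dt$, and summing over $E$ gives a constant multiple of $\|1/w\|^2_{\ELL^2(X^1,m^1)}$, which is finite by hypothesis. The delicate point is to ensure all constants coming from the Sobolev embedding are edge-independent (which they are, since every edge is a copy of $(0,1)$) so that no local-finiteness assumption is needed. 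Once the Hilbert--Schmidt property is established, Proposition~\ref{thm-gf} directly produces the desired Fourier-type expansion $(\Psi_j)$ for $L$ with the spaces $\cH_\pm$ as in \eqref{eq-hhh}, completing the proof.
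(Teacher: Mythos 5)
Your proposal is correct and takes essentially the same route as the paper's own proof: the same reference operator $T$ (multiplication by $w/\sqrt{c}$, with $T\ge 1$ from $w\ge\sqrt{c}$ and the same identification of $\cH_\pm$), a resolvent-type function $\gamma$, and the Hilbert--Schmidt property of $\gamma(L)T^{-1}$ obtained by combining an edge-uniform Sobolev bound with $1/w\in\ELL^2(X^1,m^1)$ (note only that the per-edge Sobolev sup-bound carries a factor $1/c(xy)$ -- it is the weighted quantity $\sqrt{c(xy)}\,|F(xy,t)|$ that is controlled by the graph norm -- which cancels one factor $c(xy)$ in your per-edge expression so that the sum is indeed a constant multiple of $\|1/w\|^2_{\ELL^2(X^1,m^1)}$). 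The paper's only deviations are cosmetic: it takes $\gamma(\lambda)=(1+|\lambda|)^{-1/2}$, so that $(1+L)^{-1/2}$ maps into the form domain $H^1(X^1,m^1)$ and the first-order inequality \eqref{eq-sob2} suffices instead of \eqref{eq-sob}, and it invokes the factorization principle (a bounded map $\ELL^2\to\ELL^\infty$ followed by multiplication by an $\ELL^2$ function is Hilbert--Schmidt) in place of your explicit kernel estimate.
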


\begin{proof}
The operator $(1+L)^{-1/2}:\ELL^2(X^1,m^1) \to H^1(X^1,m^1)$ is bounded as $H^1(X^1,m^1)$ is the form domain of $L$ (see Lemma~\ref{lem6}).
On the other hand, the operator $B:H^1(X^1,m^1)\to\ELL^\infty (X^1,m^1)$ defined by
$BF(xy,t)=\sqrt{c(xy)}F(xy,t)$ is bounded due to \eqref{eq-sob2}.
Therefore, we see that the operator $B(1+L)^{-1/2}:\ELL^2(X^1,m^1)\to\ELL^\infty(X^1,m^1)$ is bounded.
Now we take the function $\gamma(t):=(1+|t|)^{-1/2}$ and consider the operator $T$ acting as
\[
TF(xy,t)=\dfrac{w(xy,t)}{\sqrt{c(xy)}}\, F(xy,t).
\]
The preceding discussion shows the operator
$T^{-1} \gamma(L)\equiv w^{-1} B(1+L)^{-1/2}$ is the composition of an operator defining a bounded map from $\ELL^2$ to $\ELL^\infty$
with a bounded operator acting from $\ELL^\infty$ to $\ELL^2$, and this composition is a Hilbert-Schmidt operator according
to the factorization principle. The adjoint operator $\gamma(L)T^{-1}$
is then also Hilbert-Schmidt, and Proposition~\ref{thm-gf} is applicable.
Due to the explicit choice of $T$, the associated spaces $\cH_\pm$ can be identified with the above
weighted $\ELL^2$-spaces.
\end{proof}

Now we construct explicitly a particular function $w$ satisfying the assumptions of Lemma~\ref{prop4};
its properties will be used in the subsequent argument. Assume that the condition \eqref{eq-m1}
is satisfied, denote
\[
\alpha(x):=\sum_{y:y\sim x} \sqrt{c(xy)}, \quad x\in X^0,
\]
and pick any family of positive numbers $v(x)$ with the following properties:
\begin{equation}
      \label{eq-vvx}
v(x)\ge \sqrt[4]{m^0(x)} \text{ for all } x\in X^0 \quad \text{ and } \quad \sum_{x\in X^0} \dfrac{\alpha(x)}{v(x)^2}<\infty,
\end{equation}
and for $(xy,t)\in X^1$ put
\begin{equation}
    \label{eq-ww}
w(xy,t)=\begin{cases}
v(x)\sqrt[4]{c(xy)},& t\in\big(0,\frac 12\big),\\
v(y)\sqrt[4]{c(xy)},& t\in\big(\frac 12,1\big).
\end{cases}
\end{equation}
We have clearly $w(xy,t)\ge \min\big(\sqrt[4]{m^0(x)},\sqrt[4]{m^0(y)}\,\big)\cdot\sqrt[4]{c(xy)}\ge \sqrt{c(xy)}$
and
\begin{align*}
\Big\|\dfrac{1}{w}\Big\|^2_{\ELL^2(X^1,m^1)}&=\dfrac{1}{2} \sum_{x\in X^0} \sum_{y:y\sim x} c(xy) \Big\|\dfrac{1}{w_{xy}^2}\Big\|^2_{\ELL^2(0,1)}\\
&=
\dfrac{1}{4} \sum_{x\in X^0} \sum_{y:y\sim x} \sqrt{c(xy)} \, \Big(
\dfrac{1}{v(x)^2}+\dfrac{1}{v(y)^2}\Big)\\
&=
\dfrac 1 2\sum_{x\in X^0} \sum_{y:y\sim x} \dfrac{\sqrt{c(xy)}}{v(x)^2}=\dfrac 1 2 \sum_{x\in X^0} \dfrac{\alpha(x)}{v(x)^2}
<\infty.
\end{align*}
Therefore, $1/w\in \ELL^2(X^1,m^1)$ and Lemma~\ref{lem4} is applicable. 

\begin{theorem} \label{th3}
Assume that the network satisfies \eqref{eq-m1}. Let $(\Psi_j)$ be the Fourier-type expansion for $L$ associated
with the function $w$ from \eqref{eq-ww}, then for $\mu$-a.e. $\lambda$ the associated functions $\Psi_{j,\lambda}$
can be represented as follows: there exist a function $b$ on $X^0$ and a function $a$ on $E$ such that
for $m^1$-a.e. $(xy,t)\in X^1$ the~associated generalized eigenfunctions $\Psi_{j,\lambda}$ admit the representation
\begin{equation}
        \label{eq-fbc}
\begin{gathered}
\Psi_{j,\lambda}(xy,t)=b(x)\cos (\sqrt\lambda t) +a(xy) \Phi(\sqrt \lambda,t),\\
\text{with }\sum_{y:y\sim x}c(xy)\big|a(xy)\big|<\infty \text{ and }\sum_{y:y\sim x}c(xy)a(xy)=0
\text { for all } x\in X^0.
\end{gathered}
\end{equation}
\end{theorem}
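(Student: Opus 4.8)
The plan is to show that each generalized eigenfunction $\Psi_{j,\lambda}$ produced by Lemma~\ref{prop4} is, edge by edge, a classical solution of $-u''=\lambda u$ satisfying the vertex conditions \eqref{eq-fcont} and \eqref{eq-dder}, and then to read off the representation \eqref{eq-fbc} from the explicit solution of this elementary equation. The only information about $\Psi_{j,\lambda}$ that I shall use is property~(d) of Definition~\ref{defin2}, namely $\langle Lf,\Psi_{j,\lambda}\rangle=\lambda\langle f,\Psi_{j,\lambda}\rangle$ for every $f$ in the test class $\big\{g\in\cH_+\cap\dom L:\,Lg\in\cH_+\big\}$, together with the membership $\Psi_{j,\lambda}\in\cH_-=\ELL^2(X^1,\tfrac{c}{w^2}m^1)$ guaranteed by \eqref{eq-hhh}. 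All assertions below are understood for $\mu$-a.e. $\lambda$.

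First I would establish local regularity. Testing property~(d) against an $f$ whose only nonzero component is $f_{xy}=\psi\in C^\infty_c(0,1)$ on a single edge (such $f$ is admissible since $w$ is bounded on each individual edge) shows that each component $t\mapsto\Psi_{j,\lambda}(xy,t)$ solves $-u''=\lambda u$ on $(0,1)$ distributionally, hence is a smooth solution of this constant-coefficient equation. Using $\Phi(\sqrt\lambda,0)=0$, $\Phi'(\sqrt\lambda,0)=1$ and $\cos 0=1$ one obtains $\Psi_{j,\lambda}(xy,t)=b(x)\cos(\sqrt\lambda\,t)+a(xy)\Phi(\sqrt\lambda,t)$ with $b(x):=\Psi_{j,\lambda}(xy,0)$ and $a(xy):=\Psi_{j,\lambda}'(xy,0+)$. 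At this stage $b(x)$ may a priori depend on the chosen edge; its independence of $y$ is the continuity statement to be proved last.

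The summability of $a$ is where hypothesis \eqref{eq-m1} enters. From the Sobolev inequality \eqref{eq-sob} and $\Psi_{j,\lambda}''(xy,\cdot)=-\lambda\Psi_{j,\lambda}(xy,\cdot)$ one gets $|a(xy)|^2\le C_\lambda\|\Psi_{j,\lambda}(xy,\cdot)\|^2_{\ELL^2(0,1)}$ with $C_\lambda=a\lambda^2+b$, and since a nontrivial solution of the ODE cannot vanish on a subinterval the full-edge norm is bounded by the contribution of the half $t\in(0,\frac12)$, on which $w(xy,t)=v(x)\sqrt[4]{c(xy)}$. Writing $\beta_{xy}$ for the $\cH_-$-mass of $\Psi_{j,\lambda}$ on that half, a direct computation gives $\|\Psi_{j,\lambda}(xy,\cdot)\|^2_{\ELL^2(0,1)}\le C_\lambda'\,v(x)^2 c(xy)^{-3/2}\beta_{xy}$, whence $\sum_{y\sim x}c(xy)|a(xy)|\le C_\lambda''\,v(x)\sum_{y\sim x}c(xy)^{1/4}\sqrt{\beta_{xy}}$; a Cauchy--Schwarz step bounds this by $C_\lambda''\,v(x)\sqrt{\alpha(x)}\,\big(\sum_{y\sim x}\beta_{xy}\big)^{1/2}$, which is finite because $\alpha(x)<\infty$ by \eqref{eq-m1} and $\sum_{y\sim x}\beta_{xy}\le 2\|\Psi_{j,\lambda}\|^2_{\cH_-}<\infty$. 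This yields the first condition in \eqref{eq-fbc} and renders the Kirchhoff sum $\sum_{y\sim x}c(xy)a(xy)$ absolutely convergent.

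Finally I would extract the two vertex conditions from property~(d) via Green's formula. As every component is now classical, for any admissible $f$ the identity $\langle Lf,\Psi_{j,\lambda}\rangle-\lambda\langle f,\Psi_{j,\lambda}\rangle=0$ reduces, after edge-wise integration by parts (summable term by term thanks to the bound just obtained), to the vanishing of the boundary sum $\sum_{x}\sum_{y\sim x}c(xy)\big(f'(xy,0+)\,\overline{\Psi_{j,\lambda}(xy,0)}-f(xy,0)\,\overline{\Psi_{j,\lambda}'(xy,0+)}\big)$. Choosing $f$ supported on two edges $[x,y_1],[x,y_2]$ with $c(xy_1)f'(xy_1,0+)=1$, $c(xy_2)f'(xy_2,0+)=-1$ and $f$ vanishing at the vertices forces $\Psi_{j,\lambda}(xy_1,0)=\Psi_{j,\lambda}(xy_2,0)$, so $b$ is well defined as a function on $X^0$ and \eqref{eq-fcont} holds. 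Choosing instead $f$ equal to a fixed profile $\phi$ with $\phi(0)=1$, $\phi'(0)=0$, supported in $[0,\frac12)$ on every edge emanating from $x$ and zero elsewhere --- which lies in $\cH_+$ precisely because $\sum_{y\sim x}\sqrt{c(xy)}=\alpha(x)<\infty$ --- yields $\sum_{y\sim x}c(xy)\Psi_{j,\lambda}'(xy,0+)=0$, i.e. the remaining condition $\sum_{y\sim x}c(xy)a(xy)=0$ in \eqref{eq-fbc}. The main obstacle is exactly the failure of local finiteness: every vertex sum is a genuine series, so one must ensure both that the test functions built at a vertex of infinite degree belong to $\big\{g\in\cH_+\cap\dom L:Lg\in\cH_+\big\}$ and that the integration by parts may be performed summand by summand; the weight \eqref{eq-ww} and the hypothesis \eqref{eq-m1} are tailored to secure precisely these two points.
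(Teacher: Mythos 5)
Your proposal is correct and follows essentially the same route as the paper's own proof: interior regularity of $\Psi_{j,\lambda}$ from testing property~(d) of Definition~\ref{defin2} against single-edge $C_c^\infty$ functions, a vertex Green identity tested first with antisymmetric two-edge functions (yielding the continuity condition \eqref{eq-fcont}) and then with a profile $\psi(0)=1$, $\psi'(0)=0$ supported in $[0,\frac12)$ placed on all edges at $x$ (yielding the Kirchhoff condition, admissible in $\cH_+$ exactly because of \eqref{eq-m1}), together with the half-edge estimate coming from the explicit weight \eqref{eq-ww} and a Cauchy--Schwarz step giving $\sum_{y:y\sim x}c(xy)\bigl|a(xy)\bigr|<\infty$. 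The only deviation is internal ordering: you bound $\bigl|a(xy)\bigr|=\bigl|\Psi'_{j,\lambda}(xy,0)\bigr|$ directly via the Sobolev inequality \eqref{eq-sob} and the norm equivalence on the two-dimensional solution space of $-u''=\lambda u$ before establishing continuity, whereas the paper proves continuity first and then estimates the remainder $\Psi_{j,\lambda}-b(x)\cos(\sqrt\lambda\,\cdot)=a(xy)\Phi(\sqrt\lambda,\cdot)$ in the weighted space over the half-edges at $x$ --- an immaterial difference leading to the same exponents and the same final Cauchy--Schwarz argument.
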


\begin{proof}
By Lemma~\ref{prop4}, $\Psi_{j,\lambda}\in \cH_-$ for $\mu$-a.e. $\lambda$.
We are going to show that the regularity properties of $\Psi_{j,\lambda}$
are in fact much better. Denote
\[
\dom_+ L:=\big\{ g\in \cH_+\cap\dom L: Lg\in\cH_+\big\}.
\]
Take an arbitrary $\varphi\in C_c^\infty(0,1)$. Pick $x,y\in X^0$ with $x\sim y$
and define a function $F$ on $X^1$ by $F_{xy}:=\varphi$ and
$F_{uv}:=0$ for all $uv\notin\{xy,yx\}$. Clearly, $F$ belongs to $\dom_+ L$, and
we have $\langle F'', \Psi_{j,\lambda}\rangle+\lambda \langle F, \Psi_{j,\lambda}\rangle=0$
according to the item (d) of Definition \ref{defin2}.
Due to the special structure of $F$ this can be rewritten as
\[
-\int_0^1\varphi''(t) \overline{\Psi_{j,\lambda}(xy,t)}\, dt=\lambda \int_0^1\varphi(t) \overline{\Psi_{j,\lambda}(xy,t)}\, dt.
\]
As $\varphi$ is arbitrary, this means that $t\mapsto \Psi_{j,\lambda}(xy,t)$ solves the equation $-u''+\lambda u=0$ in the space 
of distributions $\cD'(0,1)$.
Due to the ellipticity, it is automatically a $C^\infty$ solution 
and thus one can represent $\Psi_{j,\lambda}(xy,t)= a(xy)\Phi(\sqrt\lambda,t)+\beta(xy)\cos(\sqrt \lambda t)$
with some $a,\beta:E\to \CC$.
To show the representation \eqref{eq-fbc} we just need to show that $\Psi_{j,\lambda}$ satisfies the boundary conditions
\eqref{eq-fcont} and \eqref{eq-dder}. To see this, let us pick $x\in X^0$ and denote
$\Omega_x:=\big\{(xy,t):\, y\sim x,\, t\in\big[0,\frac 12\big)\big\}$.
Let $F$ be any function from $\dom_+ L$ vanishing outside $\Omega_x$, then, applying twice the integration by parts,
\begin{align}
0=\,&\,\langle -F'',\Psi_{j,\lambda}\rangle -\lambda \langle F,\Psi_{j,\lambda}\rangle\nonumber\\
=\,&-\sum_{y:y\sim x} c(xy)\bigg(
\int_0^1 F''(xy,t) \overline{\Psi_{j,\lambda}(xy,t)}\, dt\nonumber\\
&+\lambda \int_0^1 F(xy,t) \overline{\Psi_{j,\lambda}(xy,t)}\, dt
\bigg)\nonumber\\
=\,&
\sum_{y:y\sim x} c(xy)\bigg(
F'(xy,0)\overline{\Psi_{j,\lambda}(xy,0)}+
\int_0^1 F'(xy,t) \overline{\Psi'_{j,\lambda}(xy,t)}\, dt\nonumber\\
&-\lambda \int_0^1 F(xy,t) \overline{\Psi_{j,\lambda}(xy,t)}\, dt
\bigg)\nonumber\\
=\,&
\sum_{y:y\sim x} c(xy)\bigg(
F'(xy,0)\overline{\Psi_{j,\lambda}(xy,0)}-
F(xy,0)\overline{\Psi'_{j,\lambda}(xy,0)}\nonumber\\
&-\int_0^1 F(xy,t) \overline{\Psi''_{j,\lambda}(xy,t)}\, dt
-\lambda \int_0^1 F(xy,t) \overline{\Psi_{j,\lambda}(xy,t)}\, dt
\bigg)\nonumber\\
=\,&
\sum_{y:y\sim x} c(xy)\bigg(
F'(xy,0)\overline{\Psi_{j,\lambda}(xy,0)}-
F(xy,0)\overline{\Psi'_{j,\lambda}(xy,0)}\bigg).
      \label{eq-genf}
\end{align}
Choose a function $\psi\in H^2(0,1)$ such that $\psi(0)=0$, $\psi'(0)=1$
and $\psi(t)=0$ for $t\ge \frac 12$.
Note that if $x$ has just one neighbor, then the condition \eqref{eq-fcont}
is automatically satisfied. Assume now that $x$ has at least two neighbors
and let $u\sim x$, $y\sim x$ with $u\ne y$.
Construct a function $F$ on $X^1$ as follows: put $F_{xu}:=c(xy)\psi$,
$F_{xy}:=-c(xu)\psi$ and $F_{zv}:=0$ for $zv\notin\{xy,yx,xu,ux\}$.
Clearly, this $F$ belongs to $\dom L$ and $\cH_+$,
and $LF$ belongs to $\cH_+$ as well, so $F\in \dom_+ L$.
Substituting this function into \eqref{eq-genf}
we obtain the equality $c(xy)c(xu)\Psi_{j,\lambda}(xu,0)=c(xy)c(xu)\Psi_{j,\lambda}(xy,0)$,
which means that  $\Psi_{j,\lambda}(xu,0)=\Psi_{j,\lambda}(xy,0)$.
As $y$ and $u$ are arbitrary neighbors of $x$, we conclude that
$\Psi_{j,\lambda}$ satisfies the boundary conditions \eqref{eq-fcont}.
This means that there exist constants $b(x)$, $x\in X^0$, such that
$\beta(xy)=b(x)$ for all $y\sim x$, and we obtain
$\Psi_{j,\lambda}(xy,t)= b(x)\cos(\sqrt \lambda t)+a(xy)\Phi(\sqrt\lambda,t)$.

Denote by $\ELL^2(\Omega_x)$ the set of the measurable functions $G:\Omega_x\to \CC$ with
\[
\sum_{y:y\sim x} c(xy)^{3/2}\, \int_0^{1/2} \big|G(xy,t)\big|^2dt<\infty.
\] 
Note that $\Psi_{j,\lambda}$ belongs to $\ELL^2(\Omega_x)$; this follows from the estimates
\begin{align*}
\|\Psi_{j,\lambda}\|^2_- &=\dfrac{1}{2}\sum_{u\in X^0}\sum_{y:y \sim u}  \int_0^1 \big|\Psi_{j,\lambda}(uy,t)\big|^2 \dfrac{c(uy)^2}{w(uy,t)^2}dt\\
&\ge \sum_{y:y \sim x}  \int_0^{1/2} \big|\Psi_{j,\lambda}(xy,t)\big|^2 \dfrac{c(xy)^2}{w(xy,t)^2} dt\\
&= \dfrac{1}{v(x)^2} \sum_{y:y \sim x}  \int_0^{1/2} c(xy)^{3/2}\big|\Psi_{j,\lambda}(xy,t)\big|^2 dt.
\end{align*}
The function $G_1$ defined by $G_1(xy,t)=b(x)\cos (\sqrt\lambda t)$ belongs to $\ELL^2(\Omega_x)$ due to the equality
\begin{multline*}
\sum_{y:y\sim x} c(xy)^{3/2}\, \int_0^{1/2} \big|G_1(xy,t)\big|^2dt \\
= \big|b(x)\big|^2 \int_0^{1/2} \cos (\sqrt\lambda t)^2\, dt
\, \sum_{y:y\sim x} c(xy)^{3/2}<\infty.
\end{multline*}
Therefore, the function $G_2:=\Psi_{j,\lambda}-G_1$, i.e. $G_2(xy,t)=a(xy)\Phi(\sqrt\lambda,t)$, also belongs to $\ELL^2(\Omega_x)$, which gives
\[
\sum_{y:y\sim x} c(xy)^{3/2} \big|a(xy)|^2=\dfrac{\sum_{y:y\sim x} c(xy)^{3/2}
\big\| G_2(xy,\cdot)\big\|^2_{\ELL^2(0,1/2)}}{
\big\| \Phi (\sqrt\lambda , \cdot)\big\|^2_{\ELL^2(0,1/2)}}<\infty,
\]
and by the Cauchy-Schwarz inequality we have 
\[
\sum_{y:y\sim x} c(xy)\big|a(xy)\big|\le \Big(\sum_{y:y\sim x} c(xy)^{1/2}\Big)^{1/2}
\Big(\sum_{y:y\sim x} c(xy)^{3/2} \big|a(xy)\big|^2 \Big)^{1/2}<\infty.
\]
It remains to show that $\Psi_{j,\lambda}$
satisfies the boundary conditions \eqref{eq-dder}. Take a function $\psi\in H^2(0,1)$ such that
$\psi(0)=1$, $\psi'(0)=0$ and $\psi(t)=0$ for $t\ge \frac 12$, and
define a function $F$ on $X^1$ by $F_{xy}=\psi$ for $y\sim x$ and $F_{uv}:=0$ for $uv\notin\{xy,yx\}$.
For $k\in\{0,1,2\}$ one has
\[
\big\|F^{(k)}\big\|^2_{\ELL^2(X^1,m^1)}=\sum_{y:y\sim x} c(xy) \big\|F^{(k)}_{xy}\big\|^2_{\ELL^2(0,1)}
= m^0(x)\,\big\|\psi^{(k)}\big\|^2_{\ELL^2(0,1)},
\]
i.e. $F\in \Hat H^2(X^1,m^1)$, and $F$ clearly satisfies the boundary condition \eqref{eq-fcont},
and \eqref{eq-dder}, which means that $F\in \dom L$. On the other hand,
\begin{align*}
\|F^{(k)}\|^2_+&=\sum_{y:y\sim x} \int_0^1 \big|F^{(k)}(xy,t)\big|^2 w(xy,t)^2dt\\
&=
\sum_{y:y\sim x} \int_0^{1/2} \big|\psi^{(k)}(t)\big|^2 v(x)^2 \sqrt{c(xy)}dt\\
&= v(x)^2\int_0^{1/2} \big|\psi^{(k)}(t)\big|^2dt\cdot \sum_{y:y\sim x}  \sqrt{c(xy)}<+\infty,
\end{align*}
which shows that $F$ and $LF$ belong to $\cH_+$ and, therefore,  $F$ belongs to $\dom_+L$.
Substituting this function $F$ into \eqref{eq-genf} and using the equalities
$F(xy,0)=1$, $F'(xy,0)=0$ for $y\sim x$ we obtain
\[
\sum_{y:y\sim x} c(xy) \Psi_{j,\lambda}'(xy,0)=0.
\]
As $\Psi_{j,\lambda}'(xy,0)=a(xy)$, this completes the proof.
\end{proof}

\section{D'Alembert operators}\label{sec4}

\subsection{Main properties}

For $F \in \ELL^2(X^1,m^1)$ and $\tau\in\RR$ define $\Tilde F$
by the rules \eqref{eq-tilde}, \eqref{eq-tt1} and \eqref{eq-tt2},
and $C(\tau)F$ by the expression \eqref{eq-ctau}.
In general, the sums in the definition are infinite,
so we need to show that the above operations make sense.

\begin{lemma} \label{lem14}
For $F\in \ELL^2(X^1,m^1)$ and $\tau\in\RR$ define a function $F^\tau:X^1\to\CC$
by $F^\tau(xy,t)=\Tilde F(xy,\tau+t)$, then $F\mapsto F^\tau$ defines a bounded operator
in $\ELL^2(X^1,m^1)$.
So $C(\tau)$ is a bounded operator in $\ELL^2(X^1,m^1)$ for any $\tau\in\RR$.
\end{lemma}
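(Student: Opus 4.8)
The plan is to read the recursions \eqref{eq-tt1} and \eqref{eq-tt2} as the iterated action of a single bounded operator on the ``fibre'' space $\ell^2(E,c)$, where $E$ and $\ell^2(E,c)$ are as in Subsection~\ref{ssdir}. To this end I would first record the isometric identification
\[
\ELL^2(X^1,m^1)\cong L^2\Big((0,1),\ell^2(E,c);\tfrac12\,dt\Big),
\qquad
\|F\|^2_{\ELL^2(X^1,m^1)}=\tfrac12\int_0^1\big\|F(\cdot,t)\big\|^2_{\ell^2(E,c)}\,dt,
\]
under which any edgewise operation that acts fibrewise (independently of the length variable $t$) is bounded on $\ELL^2(X^1,m^1)$ with the same operator norm as on $\ell^2(E,c)$. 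Writing $\Tilde F^{(n)}_{xy}(t):=\Tilde F(xy,n+t)$ for $t\in(0,1)$, the recursion \eqref{eq-tt1} reads $\Tilde F^{(n+1)}=\mathcal T\,\Tilde F^{(n)}$ for $n\ge0$, where
\[
(\mathcal T g)(xy)=\frac{2}{m^0(y)}\sum_{v:v\sim y}c(yv)\,g(yv)-g(yx),
\]
so that $\Tilde F^{(n)}=\mathcal T^{\,n}F$ by induction (with $\Tilde F^{(0)}=F$ from \eqref{eq-tilde}); symmetrically, \eqref{eq-tt2} gives $\Tilde F^{(-n)}=(\mathcal T')^{\,n}F$ for $n\ge0$, where $\mathcal T'=J\mathcal T J$ is the analogous backward operator built with the flip $(Jg)(xy)=g(yx)$, of the same norm.

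The main step, and the only place where assumption \eqref{eq-m0} enters, is the boundedness of $\mathcal T$ on $\ell^2(E,c)$, which holds \emph{without} any local-finiteness hypothesis. I would split $\mathcal T g=2a-Jg$, where $(Jg)(xy)=g(yx)$ is an isometric involution and $a(xy)=\frac{1}{m^0(y)}\sum_{v\sim y}c(yv)g(yv)$ depends on the endpoint $y$ only. Since $\sum_{v\sim y}c(yv)=m^0(y)<\infty$ by \eqref{eq-m0}, the Cauchy--Schwarz inequality yields
\[
\big|a(xy)\big|^2\le\frac{1}{m^0(y)}\sum_{v:v\sim y}c(yv)\big|g(yv)\big|^2;
\]
summing $c(xy)\,|a(xy)|^2$ over $xy\in E$ and using $\sum_{x\sim y}c(xy)=m^0(y)$ gives $\|a\|_{\ell^2(E,c)}\le\|g\|_{\ell^2(E,c)}$, hence $\|\mathcal T\|\le3$. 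The same estimate, applied for a.e.\ fixed $t$ (for which $F(\cdot,t)\in\ell^2(E,c)$ by Fubini), shows that the infinite sums in \eqref{eq-tt1}--\eqref{eq-tt2} converge absolutely and define genuine $\ELL^2$ functions, so that $\Tilde F$ itself makes sense.

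It then remains to assemble $F^\tau$. I would put $n:=\lfloor\tau\rfloor$ and $s:=\tau-n\in[0,1)$ and split $t\in(0,1)$ according to whether $t+\tau$ lands in $(n,n+1)$ or in $(n+1,n+2)$, obtaining
\[
F^\tau_{xy}(t)=\begin{cases}\Tilde F^{(n)}_{xy}(t+s),& t\in(0,1-s),\\ \Tilde F^{(n+1)}_{xy}(t+s-1),& t\in(1-s,1).\end{cases}
\]
A change of variables then gives $\|F^\tau\|^2_{\ELL^2(X^1,m^1)}\le\|\Tilde F^{(n)}\|^2_{\ELL^2(X^1,m^1)}+\|\Tilde F^{(n+1)}\|^2_{\ELL^2(X^1,m^1)}\le\big(\|\mathcal T\|^{2|n|}+\|\mathcal T\|^{2|n+1|}\big)\|F\|^2_{\ELL^2(X^1,m^1)}$ by the previous step (using $\mathcal T'$ in place of $\mathcal T$ when $n<0$). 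Thus $F\mapsto F^\tau$ is bounded for each fixed $\tau$, and since $C(\tau)F=\tfrac12\big(F^\tau+F^{-\tau}\big)$ by \eqref{eq-ctau}, the operator $C(\tau)$ is bounded as well. The main obstacle is exactly the boundedness of $\mathcal T$: this is where one exploits $m^0(y)<\infty$ via Cauchy--Schwarz rather than any finiteness of the vertex degrees, which is precisely what permits the treatment of graphs that are not locally finite.
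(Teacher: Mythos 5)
Your proof is correct and is essentially the paper's own argument: the crucial Cauchy--Schwarz estimate exploiting $\sum_{v:v\sim y}c(yv)/m^0(y)=1$, the induction over integer shifts starting from $F^0=F$, and the interval-splitting majoration $\|F^\tau\|^2\le\|F^{(n)}\|^2+\|F^{(n+1)}\|^2$ for non-integer $\tau$ all coincide with what the paper does. The only difference is organizational: you package the one-step recursion as a $t$-independent transfer operator $\mathcal{T}$ on $\ell^2(E,c)$, decomposed into twice an averaging contraction minus the flip isometry (giving $\|\mathcal{T}\|\le 3$), whereas the paper runs the same estimate componentwise on each edge and obtains $\|F^{\tau+1}\|^2_{\ELL^2(X^1,m^1)}\le 10\,\|F^{\tau}\|^2_{\ELL^2(X^1,m^1)}$.
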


\begin{proof}
Using the definition and the Cauchy-Schwarz inequality we have
\begin{align*}
\|F^{\tau+1}_{xy}\|^2_{\ELL^2(0,1)}&=
\bigg\| \dfrac{2}{m^0(y)} \sum_{v:v\sim y} c(yv) F^\tau_{yv}
-F^\tau_{yx}\bigg\|^2_{\ELL^2(0,1)}\\
&= \bigg\|  \sum_{v:v\sim y} \dfrac{c(yv)}{m^0(y)} \Big(2 F^\tau_{yv}-
F^\tau_{yx}\Big)\bigg\|^2_{\ELL^2(0,1)}\\
&\le
\Big( \sum_{v:v\sim y} \dfrac{c(yv)}{m^0(y)} \Big\|2 F^\tau_{yv}-F^\tau_{yx}\Big\|_{\ELL^2(0,1)}\Big)^2
\\
&\le
\bigg(
\sum_{v:v\sim y} \dfrac{c(yv)}{m^0(y)}
\bigg)\cdot
\bigg(
\sum_{v:v\sim y}
\dfrac{c(yv)}{m^0(y)} \cdot \Big\|2 F^\tau_{yv}-
F^\tau_{yx}\Big\|^2_{\ELL^2(0,1)}
\bigg)\\
&=
\sum_{v:v\sim y}
\dfrac{c(yv)}{m^0(y)} \cdot \big\|2 F^\tau_{yv}-
F^\tau_{yx}\big\|^2_{\ELL^2(0,1)}\\
&\le
8\sum_{v:v\sim y}
\dfrac{c(yv)}{m^0(y)} \,\big\|F^\tau_{yv}\big\|^2_{\ELL^2(0,1)}
+
2\big\| F^\tau_{yx}\big\|^2_{\ELL^2(0,1)}.
\end{align*}
Therefore,
\begin{multline*}
\big\|F^{\tau+1}\big\|^2_{\ELL^2(X^1,m^1)}=\dfrac{1}{2}\sum_{y\in X^0}\sum_{x:x\sim y}
c(xy)\big\|F^{\tau+1}_{xy}\big\|^2_{\ELL^2(0,1)}\\
\le
4 \sum_{y\in X^0}\sum_{x:x\sim y}
\sum_{v:v\sim y} c(xy)\dfrac{c(yv)}{m^0(y)} \,\big\|F^\tau_{yv}\big\|^2_{\ELL^2(0,1)}
+\sum_{y\in X^0}\sum_{x:x\sim y} c(xy)\big\| F^\tau_{yx}\big\|^2_{\ELL^2(0,1)}\\=10 \,\big\|F^{\tau}\big\|^2_{\ELL^2(X^1,m^1)}.
\end{multline*}
Similarly, using \eqref{eq-tt2} we show
\[
\|F^{\tau-1}\|^2_{\ELL^2(X^1,m^1)}\le 10 \,\|F^{\tau}\|^2_{\ELL^2(X^1,m^1)}
\]
Therefore, if $F\mapsto F^\tau$ is bounded, then $F\mapsto F^{\tau\pm 1}$
are bounded too. As $F\mapsto F^0$ is just the identity, we show by induction
that $F\mapsto F^\tau$ is bounded for all $\tau\in\ZZ$.
The boundedness for any $\tau$ follows from the majoration
\begin{multline*}
\big\| F^\tau_{xy}\big\|^2_{\ELL^2(0,1)}=\int_\tau^{\tau+1} \big|\Tilde F(xy,t)\big|^2dt\\
\le\int_n^{n+1} \big|\Tilde F(xy,t)\big|^2dt+\int_{n+1}^{n+2} \big|\Tilde F(xy,t)\big|^2dt\\
= \big\| F^n_{xy}\big\|^2_{\ELL^2(0,1)}
+\big\| F^{n+1}_{xy}\big\|^2_{\ELL^2(0,1)},
\end{multline*}
where $n\in\ZZ$ is chosen in such a way that $\tau\in [n,n+1)$. 
\end{proof}

\begin{lemma}
There holds 
\begin{align}
C(\tau+1)+C(\tau-1)&=2 C(1)C(\tau) &\text{for } &\tau\in\RR,  \label{eq-cc2}\\
\text{and} \quad
     \label{eq-lem20}
 C(2\tau)+1&=2C(\tau)^2 &\text{ for }&\tau\in\Big[\,0,\dfrac 12\Big].
\end{align}
\end{lemma}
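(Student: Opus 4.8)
The plan is to reduce both identities to pointwise (a.e.) equalities between the d'Alembert extensions $\Tilde F$, and then to read them off from the cosine addition and double-angle formulas at the level of the shift arguments. Since $C(\tau)$ is bounded for every $\tau$ by Lemma~\ref{lem14}, it suffices to verify the asserted operator identities, applied to an arbitrary $F\in\ELL^2(X^1,m^1)$, on a single edge $[x,y]$ for a.e.\ $t\in[0,1]$. The key tool will be a \emph{shift covariance} of the extension map $F\mapsto\Tilde F$: with the notation $F^\tau(xy,t)=\Tilde F(xy,t+\tau)$ of Lemma~\ref{lem14}, I claim that $\widetilde{F^\tau}(xy,s)=\Tilde F(xy,s+\tau)$ for every $\tau\in\RR$ and a.e.\ $s\in\RR$. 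Granting this and the (obvious) linearity of $F\mapsto\Tilde F$, the extension of $C(\tau)F=\tfrac12\big(F^\tau+F^{-\tau}\big)$ is $\widetilde{C(\tau)F}(xy,s)=\tfrac12\big[\Tilde F(xy,s+\tau)+\Tilde F(xy,s-\tau)\big]$. Substituting this into \eqref{eq-ctau}, one finds that both $C(\tau+1)F+C(\tau-1)F$ and $2C(1)C(\tau)F$ equal $\tfrac12\big[\Tilde F(xy,t+\tau+1)+\Tilde F(xy,t-\tau+1)+\Tilde F(xy,t+\tau-1)+\Tilde F(xy,t-\tau-1)\big]$, which gives \eqref{eq-cc2}; likewise $2C(\tau)^2F$ becomes $\tfrac12\big[\Tilde F(xy,t+2\tau)+\Tilde F(xy,t-2\tau)\big]+\Tilde F(xy,t)$, and since $\Tilde F(xy,t)=F(xy,t)$ for $t\in[0,1]$ this equals $C(2\tau)F+F$, giving \eqref{eq-lem20}. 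The restriction $\tau\in[0,\tfrac12]$ only serves to keep $2\tau\in[0,1]$ and the shifted arguments in $[-\tfrac12,\tfrac32]$.

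The real work, and the step I expect to be the main obstacle, is the shift covariance, whose crucial ingredient is the observation that the forward recursion \eqref{eq-tt1}, i.e.
\[
\Tilde F(xy,t+1)=\frac{2}{m^0(y)}\sum_{v:v\sim y}c(yv)\,\Tilde F(yv,t)-\Tilde F(yx,t),
\]
in fact holds for a.e.\ $t\in\RR$, not merely for $t>0$. For $t>0$ this is \eqref{eq-tt1} by definition; for $t<0$ I would establish it from the backward recursion \eqref{eq-tt2} together with the symmetry $\Tilde F(xy,t)=\Tilde F(yx,1-t)$. Applying the symmetry to all three terms reduces the desired equality at such $t$ (put $s:=1-t>1$) to
\[
\Tilde F(yx,s-1)=\frac{2}{m^0(y)}\sum_{v:v\sim y}c(yv)\,\Tilde F(vy,s)-\Tilde F(xy,s).
\]
One then expands $\Tilde F(vy,s)$ and $\Tilde F(xy,s)$ by \eqref{eq-tt1} (legitimate since $s>1$); using $\sum_{v:v\sim y}c(yv)=m^0(y)$ the emerging double sum collapses and the remaining terms telescope, leaving exactly $\Tilde F(yx,s-1)$. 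Getting this cancellation precisely right is the heart of the matter.

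Once \eqref{eq-tt1} is known to hold for all $t$ (and, by the same symmetry, so does its backward counterpart \eqref{eq-tt2}), the shift covariance follows by uniqueness: $\Tilde F$ is the \emph{unique} function on $X^1$, extended to $t\in\RR$ on each edge, which restricts to $F$ for $t\in[0,1]$ and obeys the recursions \eqref{eq-tt1} and \eqref{eq-tt2}. The shifted function $s\mapsto\Tilde F(xy,s+\tau)$ restricts on $[0,1]$ to $F^\tau$ by the very definition of $F^\tau$, and it obeys both recursions in the variable $s$ because each of them holds for $\Tilde F$ at every point; hence it coincides with $\widetilde{F^\tau}$. Finally I would note that every manipulation above is an identity of $\ELL^2$-functions: the linearity and boundedness of $F\mapsto F^\tau$ supplied by Lemma~\ref{lem14} guarantee that $\widetilde{C(\tau)F}$, and thereby both sides of \eqref{eq-cc2} and \eqref{eq-lem20}, are genuine elements of $\ELL^2(X^1,m^1)$, so that the pointwise a.e.\ equalities verified on each edge upgrade to the claimed operator identities.
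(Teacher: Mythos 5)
Your proof is correct, and it takes a genuinely different route from the paper's. The paper proceeds by direct expansion: for \eqref{eq-cc2} it sets $K:=C(\tau)F$ and unfolds $\Tilde K(xy,t\pm 1)$ by a single application of \eqref{eq-tt1}/\eqref{eq-tt2} (legitimate, since for $t\in(0,1)$ one has $t+1\in(1,2)$ and $t-1\in(-1,0)$), then unfolds $C(\tau\pm1)F$ and matches terms; for \eqref{eq-lem20} it computes $G:=C(\tau)F$ piecewise on $(0,\tau)$, $[\tau,1-\tau]$, $(1-\tau,1)$ and verifies by hand that $\Tilde G(xy,t\pm\tau)=\frac12\,\Tilde F(xy,t\pm2\tau)+\frac12\,F(xy,t)$. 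You instead isolate one structural fact --- the recursion \eqref{eq-tt1} (and likewise \eqref{eq-tt2}) holds at a.e.\ \emph{real} argument, hence the extension map commutes with shifts, $\widetilde{F^\tau}(xy,s)=\Tilde F(xy,s+\tau)$ --- after which both identities become bookkeeping of shift arguments. Your reduction and cancellation are right: after expanding $\Tilde F(vy,s)$ and $\Tilde F(xy,s)$ by \eqref{eq-tt1}, the quantity $A:=\frac{2}{m^0(y)}\sum_{w:w\sim y}c(yw)\Tilde F(yw,s-1)$ enters with coefficients $2,-1,-1$ and cancels, leaving exactly $\Tilde F(yx,s-1)$; and the uniqueness argument is sound because the recursions determine the extension interval by interval (note only that $F^\tau$, exactly as in Lemma~\ref{lem14}, must be read as a function of \emph{directed} edges --- its edge reversal is $F^{-\tau}$ --- which is harmless, since the recursive construction never uses symmetry of the initial data). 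Your route buys three things: it supplies a justification the paper's own computation tacitly needs, since the paper applies \eqref{eq-tt1}/\eqref{eq-tt2} at the arguments $t\pm\tau\pm1$, which for general $\tau\in\RR$ lie outside the ranges $t>1$ and $t<0$ where those recursions are stated; it yields \eqref{eq-lem20} for every $\tau\in\RR$, not only $\tau\in[0,\frac12]$ (so your closing remark about that restriction can even be strengthened: in your approach no restriction is needed at all); and it makes the cosine-family structure behind Theorem~\ref{th4} transparent. What the paper's computation buys in exchange is that it stays entirely at the level of finitely many explicit recursion steps and needs no uniqueness-of-extension argument.
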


\begin{proof}
Take any $F\in \ELL^2(X^1,m^1)$ and denote $K:=C(\tau) F$, then
for any $t\in(0,1)$ we have:
\begin{multline}
C(1)C(\tau) F(xy,t)= \dfrac{1}{2} \Big( \Tilde K(xy,t+1)+\Tilde K(xy,t-1)\Big)\\
\begin{aligned}
=&\dfrac{1}{m^0(y)} \sum_{v:v\sim y} c(yv) K(yv,t)
+ \dfrac{1}{m^0(x)} \sum_{u:u\sim x} c(ux) \Tilde K(ux,t)
-\Tilde K(yx,t)\\
=&\dfrac{1}{ 2 m^0(y)} \sum_{v:v\sim y} c(yv) \big( \Tilde F(yv,t+\tau)+\Tilde F(yv,t-\tau)\big)\\
&+ \dfrac{1}{2 m^0(x)} \sum_{u:u\sim x} c(ux) \big( \Tilde F(ux,t+\tau)+\Tilde F(ux,t-\tau)\big)\\
&-\dfrac{1}{2}\,\Big(
\Tilde F(yx,t+\tau)+\Tilde F(yx,t-\tau)
\Big).
\end{aligned}
  \label{eq-cct}
\end{multline}
Now we have, using \eqref{eq-tt1} and \eqref{eq-tt2},
\begin{align*}
C(\tau+1)F(xy,t)=&\dfrac{1}{2}\,\Big(\Tilde F(xy,t+\tau+1)+\Tilde F(xy,t-\tau-1)\Big)\\
=&\dfrac{1}{m^0(y)} \sum_{v:v\sim y} c(yv) \Tilde F(yv,t+\tau) -\dfrac{1}{2}\,\Tilde F(yx,t+\tau),\\
&+
\dfrac{1}{m^0(x)} \sum_{u:u\sim x} c(ux) \Tilde F(ux,t-\tau) - \dfrac 1 2  \,\Tilde F(yx,t-\tau)\\
C(\tau-1)F(xy,t)=&\dfrac{1}{2}\,\Big(\Tilde F(xy,t+\tau-1)+\Tilde F(xy,t-\tau+1)\Big)\\
=&\dfrac{1}{m^0(x)} \sum_{u:u\sim x} c(ux) \Tilde F(ux,t+\tau ) - \dfrac 1 2 \,\Tilde F(yx,t+\tau)\\
 &+\dfrac{1}{m^0(y)} \sum_{v:v\sim y} c(yv) \Tilde F(yv,t-\tau ) - \dfrac 1 2 \,\Tilde F(yx,t-\tau).
\end{align*}
Comparing with  \eqref{eq-cct} we get the identity \eqref{eq-cc2}.

The proof of \eqref{eq-lem20} follows the same scheme. Denote $G:=C(\tau)F$, then
\[
G(xy,t)
=\begin{cases}
\dfrac 12\, F(xy,t+\tau)-\dfrac12 \,F(yx,t-\tau+1)\\
\qquad+\dfrac{1}{m^0(x)} \sum\limits_{u:u\sim x} c(ux) F(ux,t-\tau+1) \text{ for }0<t< \tau,\\[\bigskipamount]
\dfrac 12\, F(xy,t+\tau)+\dfrac12 \,F(xy,t-\tau) \quad  \text{ for }\tau\le t\le 1-\tau,\\[\bigskipamount]
\dfrac 12\, F(xy,t-\tau)-\dfrac12 \,F(yx,t+\tau-1)\\
\qquad + \dfrac{1}{m^0(y)}\sum\limits_{v:v\sim y} c(yv) F(yv,t+\tau-1) \text{ for }1-\tau<t<1.
\end{cases}
\]
After an elementary algebra we obtain
\begin{align*}
\Tilde G(xy,t+\tau)
&=\begin{cases}
\dfrac 12 \,F(xy,t+2\tau)+\dfrac 12 \,F(xy,t) \text{ for } 0< t \le 1-2\tau,\\[\bigskipamount]
\dfrac{1}{m^0(y)}\sum\limits_{v:v\sim y} c(yv) F(yv,t+2\tau-1)\\[\bigskipamount]
\quad-\dfrac12\,F(yx,t+2\tau-1)+\dfrac{1}{2} \,F(xy,t) \text{ for } 1-2\tau<t< 1
\end{cases}\\[\medskipamount]
&=\dfrac12\, \Tilde F(xy,t+2\tau)+\dfrac12\, F(xy,t),
\end{align*}
and
\begin{align*}
\Tilde G(xy,t-\tau)
&=\begin{cases}
\dfrac{1}{m^0(x)}\, \sum\limits_{u:u\sim x} c(ux) F(ux,t-2\tau+1)-\dfrac12\, F(yx,t-2\tau+1)\\[\bigskipamount]
\quad+\dfrac12\, F(xy,t) \text{ for } 0<t\le 2\tau,\\[\bigskipamount]
\dfrac12\, F(xy,t)+\dfrac12\, F(xy,t-2\tau) \text{ for } 2\tau<t< 1
\end{cases}\\[\medskipamount]
&=\dfrac12\, \Tilde F(xy,t-2\tau)+\dfrac12\, F(xy,t),
\end{align*}
which gives the sought result.
\end{proof}

\subsection{Relation between $L$ and $C(\tau)$}

We now use the above preparations and the Fourier-type expansion from section~\ref{sec3}
to prove Theorem~\ref{th4}.

\begin{proof}[Proof of Theorem \ref{th4}]
Note first that it is sufficient to show the result for $\tau\in[0,\frac 12]$.
Indeed, it extends to the values $\tau\in[0,1]$ using Eq.~\eqref{eq-lem20} and the identity
$\cos (\alpha \tau)= 2 \cos^2(\alpha\tau/2)-1$. Then we extend the equality the the values
$\tau\in[-1,1]$  by the symmetry:
$C(\tau)=C(-\tau)$, $\cos(\tau\sqrt L)=\cos(-\tau\sqrt L)$,
and, finally, we prove it for all values of $\tau$ by induction using \eqref{eq-cc2} and
the identities $\cos( \alpha (\tau\pm1)\big)=2\cos (\alpha)\cos (\alpha\tau)-\cos( \alpha (\tau\mp1)\big)$.
So from now on we assume that $\tau\in[0,\frac 12]$.

As both operators $C(\tau)$ and $\cos(\tau\sqrt L)$ are bounded, it is sufficient to show
that they coincide on a dense subspace.
Let $\varphi\in C_c^\infty(0,1)$ with $\supp \varphi\subset (0,\frac 12)$.
Pick $x,y\in X$ with $x\sim y$ and denote by $F\equiv \Pi(\varphi, xy)$
the function on $X^1$ with $F_{xy}=\varphi$ and
$F_{uv}=0$ for $uv\notin\{xy,yx\}$. As $xy$ and $\varphi$
are arbitrary, the subspace spanned by the functions $\Pi(\varphi,xy)$
is dense in $\ELL^2(X^1,m^1)$.
Therefore, it is sufficient to show that $C(\tau)F=\cos(\tau\sqrt L)F$
for the above function $F$.

Clearly, $F\in\cH_+$. Let us show that the function $G:=C(\tau)F$ belongs to $\cH_+$ too.
We have by direct computation,
\begin{align*}
\text{for }u\sim x, \, u\ne y: \quad&&
\Tilde F(xu,t+\tau)&=0,\\
&& \Tilde F(xu,t-\tau)&=\begin{cases}
\dfrac{2 c(xy)}{m^0(x)}\, \varphi(\tau-t), & t\in [0,\tau),\\
0, & t\in[\tau,1],
\end{cases}\\
\text{and} \qquad
&& \Tilde F(xy,t+\tau)&=\begin{cases}
\varphi(t+\tau), & t\in[0,1-\tau],\\[\bigskipamount]
0,& t\in(1-\tau,1],
\end{cases}\\
&& \Tilde F(xy,t-\tau)&=\begin{cases}
\Big(\dfrac{2 c(xy)}{m^0(x)}-1\Big) \varphi(\tau-t), & t\in[0,\tau),\\[\bigskipamount]
\varphi(t-\tau), & t\in[\tau,1],
\end{cases}
\end{align*}
and $\Tilde F (uv,t\pm\tau)=0$ for $x\notin\{u,v\}$ and $t\in[0,1]$.
This gives
\begin{align*}
G(xu,t)&=\begin{cases}
\dfrac{c(xy)}{m^0(x)}\, \varphi(\tau-t), & t \in[0,\tau],\\[\bigskipamount]
0, & t\in (\tau,1],
\end{cases} \quad \text{ for } u\sim x, \, u\ne y,\\[\bigskipamount]
G(xy,t)&=\begin{cases}
\dfrac{c(xy)}{m^0(x)}\, \varphi(\tau-t) -\dfrac{1}{2}\, \varphi(\tau-t) +\dfrac{1}{2}\, \varphi(t+\tau), \quad t\in[0,\tau],\\[\bigskipamount]
\dfrac{1}{2}\, \varphi(t+\tau)+\dfrac{1}{2} \,\varphi(t-\tau), \quad t\in(\tau,1-\tau),\\[\bigskipamount]
\dfrac{1}{2}\, \varphi(t-\tau), \quad t\in[1-\tau,1],
\end{cases}
\end{align*}
and $G_{uv}=0$ for $x\notin\{u,v\}$.
Denote $\kappa:=\|\varphi\|_\infty+\|\varphi'\|_\infty+\|\varphi''\|_\infty$,
then for any $k\in\{0,1,2\}$ and all $u\sim v$ one has
$\|G^{(k)}_{uv}\|_\infty\le 2\kappa$.
The inclusion $G\in \cH_+$ means that
\[
\sum_{u\in X^0}\sum_{v:v\sim u} \int_0^1 \big|G(uv,t)\big|^2 w(uv,t)^2\,dt<+\infty.
\]
In our case,
\begin{multline}
   \label{eq-hplus}
\dfrac 12 \sum_{u\in X^0}\sum_{v:v\sim u} \int_0^1 \big|G(uv,t)\big|^2 w(uv,t)^2\,dt\\
=
\int_{\tau}^1 \big|G(xy,t)\big|^2 w(xy,t)^2dt
+
\sum_{u:u\sim x} \int_0^\tau\big|G(xu,t)\big|^2 w(xu,t)^2dt\\
\le 4\kappa^2 \int_{\tau}^1  w(xy,t)^2dt
+ 4\kappa^2\,\sum_{u:u\sim x} \int_0^\tau w(xu,t)^2dt
\\
\le 4\kappa^2\big(v(x)^2+v(y)^2\big) + 4\kappa^2 v(x)^2 \sum_{u:u\sim x} \sqrt{c(xu)}<\infty.
\end{multline}
Therefore, $G=C(\tau)F \in\cH_+$, and for $\mu$-a.e. $\lambda$ we have, with
$\Psi:=\Psi_{j,\lambda}$,
\begin{multline*}
\big(U_j C(\tau)F\big)(\lambda)=\big\langle C(\tau)F,  \Psi_{j,\lambda} \big\rangle\\
=\dfrac{1}{2}\, \sum_{u\in X^0}\sum_{v:v\sim u} c(uv)
\int_0^1 \big(C(\tau)F \big)(uv,t)\, \overline{\Psi(uv,t)}\, dt=I_1+I_2,
\end{multline*}
where
\begin{align*}
I_1&=\sum_{u:u\sim x} c(xu) \,\dfrac{c(xy)}{m^0(x)} \int_0^\tau
\varphi(\tau-t) \overline{\Psi(xu,t)}\, dt,\\
I_2&=\dfrac{c(xy)}{2}\,\Big(
{}-\,\int_0^\tau \varphi(\tau-t) \overline{\Psi(xy,t)}dt\\
&\quad+\int_\tau^1 \varphi(t-\tau)\overline{\Psi(xy,t)}dt+\int_0^{1-\tau}\varphi(t+\tau)\overline{\Psi(xy,t)}dt
\Big).
\end{align*}
Using Theorem~\ref{th3} one can represent
$\Psi(xu,t)=B \cos(\sqrt \lambda t)+A(u) \Phi(\sqrt\lambda,t)$
with some constants $B$ and $A(u)$, $u\sim x$, such that
\[
\sum_{u:u\in x} c(xu) \big|A(u)\big|<+\infty \text{ and} \sum_{u:u\in x} c(xu) A(u)=0.
\]
Using this representation we obtain
\begin{align*}
I_1=&\,\overline B \dfrac{c(xy)}{m^0(x)} \sum_{u:u\sim x} c(xu) \int_0^\tau \varphi(t) \cos\big(\sqrt \lambda (t-\tau)\big)dt\\
&-\dfrac{c(xy)}{m^0(x)} \sum_{u:u\sim x} c(xu) \overline{A(u)} \int_0^\tau \varphi(t)\Phi(\sqrt\lambda,t-\tau)dt\\
=&\,\overline Bc(xy)\int_0^\tau \varphi(t) \cos\big(\sqrt \lambda (t-\tau)\big)dt\\
=&\,\overline Bc(xy) \Big(
\cos(\sqrt\lambda \tau)\int_0^\tau \varphi(t) \cos(\sqrt \lambda t)dt+
\sin(\sqrt\lambda \tau)\int_0^\tau \varphi(t) \sin(\sqrt \lambda t)dt
\Big)
\end{align*}
and
\begin{align*}
\dfrac{2I_2}{c(xy)}=&-\int_0^\tau \varphi(t) \overline{\Psi(xy,\tau-t)}dt
+\int_0^{1-\tau} \varphi(t)\overline{\Psi(xy,t+\tau)}dt\\
&\quad+\int_\tau^1\varphi(t)\overline{\Psi(xy,t-\tau)}dt\\
=&\,\overline B K_1 +\overline A(y) K_2
\end{align*}
with
\begin{align*}
K_1:=& -\int_0^\tau \varphi(t) \cos\big(\sqrt \lambda (t-\tau)\big)dt
+ \int_0^{1-\tau} \varphi(t) \cos\big(\sqrt \lambda (t+\tau)\big)dt\\
&\quad+\int_\tau^1 \varphi(t) \cos\big(\sqrt \lambda (t-\tau)\big)dt,\\
=&-\cos(\sqrt \lambda \tau)\int_0^\tau \varphi(t) \cos(\sqrt \lambda t)dt-\sin(\sqrt \lambda \tau)\int_0^\tau \varphi(t) \sin(\sqrt \lambda t)dt\\
&+\cos(\sqrt \lambda \tau)\int_0^1 \varphi(t) \cos(\sqrt \lambda t)dt-\sin(\sqrt \lambda \tau)\int_0^1 \varphi(t) \sin(\sqrt \lambda t)dt\\
&+\cos(\sqrt \lambda \tau)\int_\tau^1 \varphi(t) \cos(\sqrt \lambda t)dt+\sin(\sqrt \lambda \tau)\int_\tau^1 \varphi(t) \sin(\sqrt \lambda t)dt\\
=&\,2\cos(\sqrt \lambda \tau)\int_\tau^1 \varphi(t) \cos(\sqrt \lambda t)dt
-2\sin(\sqrt \lambda \tau)\int_0^\tau \varphi(t) \sin(\sqrt \lambda t)dt
\\
K_2:=&\int_0^\tau \varphi(t) \Phi(\sqrt \lambda,t-\tau)dt
+ \int_0^{1-\tau} \varphi(t) \Phi(\sqrt \lambda, t+\tau)dt\\
&\quad +  \int_\tau^1 \varphi(t) \Phi(\sqrt \lambda,t-\tau)dt\\
=&\int_0^1 \varphi(t) \Big(\Phi(\sqrt \lambda, t+\tau)+\Phi(\sqrt \lambda, t-\tau)\Big)dt\\
=&\,2\cos (\sqrt\lambda \tau) \int_0^1 \varphi(t) \Phi(\sqrt \lambda, t)dt;
\end{align*}
above we used the fact that the integration on $[0,1-\tau]$ is equivalent to the integration on $[0,1]$
because $\varphi$ vanishes in $[1-\tau,1]$.
Finally we arrive at
\begin{align*}
\big(U_j C(\tau)F\big)(\lambda)=\,&
\big\langle C(\tau)F,  \Psi_{j,\lambda} \big\rangle=I_1+I_2\\
=\, &\overline Bc(xy) 
\cos(\sqrt\lambda \tau)\int_0^\tau \varphi(t) \cos(\sqrt \lambda t)dt\\
&+\overline Bc(xy) \sin(\sqrt\lambda \tau)\int_0^\tau \varphi(t) \sin(\sqrt \lambda t)dt\\
&+\overline Bc(xy)\cos(\sqrt \lambda \tau)\int_\tau^1 \varphi(t) \cos(\sqrt \lambda t)dt\\
&-\overline Bc(xy)\sin(\sqrt \lambda \tau)\int_0^\tau \varphi(t) \sin(\sqrt \lambda t)dt\\
&+\overline{A(y)}\, c(xy)\cos (\sqrt\lambda \tau) \int_0^1 \varphi(t) \Phi(\sqrt \lambda, t)dt\\
=\, &c(xy)\cos(\tau\sqrt\lambda )\int_0^1\varphi(t) \overline{\big(B \cos (\sqrt\lambda \,t) + A(y)\Phi(\sqrt \lambda, t)\big)}dt\\
=\, &c(xy)\cos(\tau\sqrt\lambda)\int_0^1 \varphi(t)\overline{\Psi(xy,t)}dt\\
=\, &\cos(\tau \sqrt\lambda )\big\langle F,  \Psi_{j,\lambda} \big\rangle=\cos(\sqrt\lambda \,\tau)\big(U_j F\big)(\lambda).
\end{align*}
Therefore, $U C(\tau) U^*$ coincides on a dense subspace
with the operator of multiplication with the function $\lambda\mapsto \cos(\tau\sqrt{\lambda})$.
On the other hand, $U \cos(\tau\sqrt L)U^*$ is the same multiplication operator due
to the definition of an ordered spectral representation (Definition \ref{defin2}). This shows
the equality $C(\tau)=\cos(\tau\sqrt L)$.
\end{proof}

\subsection{Relation between $L$ and $A$}\label{swave}

For $t\in \RR$ define a function
\[
S_t(z):=\Phi(z,t)=\begin{cases}
t, & z=0,\\[\medskipamount]
\dfrac{\sin (z t)}{z}, & z\ne 0.
\end{cases}
\]
Let us recall the following well known fact, see e.g. Proposition~6.7 in \cite{ksm}:
\begin{prop}
Let $T$ be a self-adjoint operator in a Hilbert space $\cH$ and $T\ge 0$.
For $F_0\in \dom T$ and $F_1\in\dom\sqrt T$ define a function
$G:\RR\to \cH$ by 
$G(t):=\cos\big( t \sqrt T\,\big) F_0+ S_t\big(\sqrt T\big) F_1$,
then $G$ is the unique $C^2(\RR,\cH)$-solution of the initial
value problem for the wave equation associated with $T$,
\[
G''+TG=0, \quad G(0)=F_0,\quad G'(0)=F_1.
\]
Furthermore, for any $F\in\dom\sqrt T$ and any $\tau\in\RR$ we have
\begin{equation}
     \label{eq-sint}
S_\tau(\sqrt T) F=\int_0^\tau \cos \big(\sigma\sqrt T\big) F\, d\sigma.
\end{equation}
\end{prop}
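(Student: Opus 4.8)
The plan is to establish every assertion through the spectral theorem applied to the nonnegative self-adjoint operator $T$. Writing $E$ for the spectral measure of $\sqrt T$ (supported in $[0,\infty)$) and $\omega$ for its spectral variable, one has $\cos(t\sqrt T)=\int\cos(t\omega)\,dE(\omega)$ and $S_t(\sqrt T)=\int S_t(\omega)\,dE(\omega)$ with $S_t(\omega)=\Phi(\omega,t)$. The whole argument then reduces to the elementary scalar identities $\partial_t^2\cos(t\omega)=-\omega^2\cos(t\omega)$ and $\partial_t^2 S_t(\omega)=-\omega^2 S_t(\omega)$, transported to $\cH$ by functional calculus.

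For existence I would differentiate $G$ twice under the spectral integral. The hypotheses $F_0\in\dom T$ and $F_1\in\dom\sqrt T$ mean exactly that $\int\omega^4\,d\|E(\omega)F_0\|^2<\infty$ and $\int\omega^2\,d\|E(\omega)F_1\|^2<\infty$, and these finiteness properties are precisely what is needed to differentiate termwise and to obtain, by dominated convergence in the spectral integral, that $t\mapsto G(t)$ is of class $C^2(\RR,\cH)$ with $G'(t)=-\sqrt T\sin(t\sqrt T)F_0+\cos(t\sqrt T)F_1$ and $G''(t)=-T\cos(t\sqrt T)F_0-\sqrt T\sin(t\sqrt T)F_1$. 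Using $\omega^2 S_t(\omega)=\omega\sin(t\omega)$ one reads off $TG(t)=-G''(t)$, while $S_0(\omega)=0$ and $\partial_t S_t(\omega)|_{t=0}=1$ give the initial conditions $G(0)=F_0$ and $G'(0)=F_1$.

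For uniqueness I would remove the unboundedness of $T$ by spectral truncation. If $H$ is a $C^2$-solution of the homogeneous problem $H''+TH=0$ with $H(0)=H'(0)=0$, then $H(t)\in\dom T$ for every $t$ by the very definition of a solution. For $R>0$ put $P:=E([0,R])$; since $P$ commutes with $T$ and $TP$ is bounded (by $R^2$), the vector-valued function $H_R:=PH$ lies in the $T$-invariant subspace $\ran P$ and solves there the linear equation $H_R''=-(TP)H_R$ with bounded coefficient operator and vanishing Cauchy data, whence $H_R\equiv 0$ by the standard uniqueness theorem for linear ordinary differential equations in a Banach space. Letting $R\to\infty$ and using $P\to I$ strongly yields $H\equiv 0$, so the solution is unique.

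Finally, for the integral formula~\eqref{eq-sint} I would test both sides against an arbitrary $\psi\in\cH$. Since $\int_0^\tau\cos(\sigma\omega)\,d\sigma=S_\tau(\omega)$ for every $\omega\ge 0$, an application of Fubini's theorem — legitimate because $\big|\langle\cos(\sigma\sqrt T)F,\psi\rangle\big|\le\|F\|\,\|\psi\|$ uniformly in $\sigma$ and the total variation of $\langle E(\cdot)F,\psi\rangle$ is bounded by $\|F\|\,\|\psi\|$ — lets me interchange the Bochner integral in $\sigma$ with the spectral integral and obtain $\big\langle\int_0^\tau\cos(\sigma\sqrt T)F\,d\sigma,\psi\big\rangle=\langle S_\tau(\sqrt T)F,\psi\rangle$; as $\psi$ is arbitrary, \eqref{eq-sint} follows (note that $S_\tau(\sqrt T)$ is bounded, as $|S_\tau(\omega)|\le|\tau|$). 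The main obstacle I anticipate is not any single computation but the careful handling of the unbounded operator $T$: justifying the termwise differentiation and the $C^2(\RR,\cH)$ regularity of $G$, and, above all, the uniqueness, where the passage through the bounded spectral subspaces $\ran E([0,R])$ is precisely the device that turns the otherwise merely formal argument into a rigorous one.
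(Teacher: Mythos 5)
Your proof is correct. Note that the paper itself offers no proof of this proposition: it is invoked as a ``well known fact'' with a pointer to Proposition~6.7 in \cite{ksm}, so the only meaningful comparison is with that standard reference, and your functional-calculus argument is essentially the canonical proof one finds there. All three steps are sound: the conditions $\int\omega^4\,d\|E(\omega)F_0\|^2<\infty$ and $\int\omega^2\,d\|E(\omega)F_1\|^2<\infty$ are exactly what is needed to dominate the difference quotients (e.g.\ $|\cos((t+h)\omega)-\cos(t\omega)|\le|h|\,\omega$) and differentiate twice under the spectral integral; the truncation $P=E([0,R])$, together with $PT\subset TP$ and the boundedness of $TP$, correctly reduces uniqueness to Picard--Lindel\"of for a first-order linear system in a Banach space, and the strong convergence $E([0,R])\to I$ finishes it; and the Fubini interchange for \eqref{eq-sint} is legitimate since the complex measure $\langle E(\cdot)F,\psi\rangle$ has total variation at most $\|F\|\,\|\psi\|$. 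Two small remarks: your uniqueness argument tacitly (and correctly) uses that a $C^2$-solution satisfies $H(t)\in\dom T$ for all $t$, which you do state; and your proof of \eqref{eq-sint} never uses $F\in\dom\sqrt T$, so you actually establish the identity for every $F\in\cH$, both sides being bounded operators --- a slight strengthening of the statement as quoted.
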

We remark that for $\ker T=\{0\}$ in the above theorem one has simply
$S_t (\sqrt T)=\big(\sqrt T\big)^{-1}\sin\big(t\sqrt T)$.

Now let us prove Theorem~\ref{th6}.

\begin{proof}[Proof of Theorem \ref{th6}]
 Actually we are going to show that $A=S_1(\sqrt L)\equiv \Phi(\sqrt L)$. As both $A$
 and $\Phi(\sqrt L)$ are bounded, it is sufficient to show that they coincide the dense subspace $\dom\sqrt L$.
Take an arbitrary $F\in \dom\sqrt L\equiv H^1(X^1,m^1)$, then, by combining Theorem~\ref{th4}, Lemma~\ref{lem6}
and the identity~\eqref{eq-sint}, for $m^1$-a.e. $(xy,t)\in X^1$ we have
\[
G(xy,t):=S_1(\sqrt L)F(xy,t)=\int_0^1 C(\tau)F(xy,t)\, d\tau.
\]
Let $t\in[0,\frac 12]$, then
\[
C(\tau)F(xy,t)=\begin{cases}
\dfrac12\, F(xy,t-\tau)+\dfrac12\, F(xy,t+\tau), & \tau \in(0,t),\\[\bigskipamount]
\dfrac12\, F(xy,t+\tau)-\dfrac12\, F(yx,t-\tau+1)\\[\medskipamount]
\qquad+\dfrac{1}{m^0(x)}\,
\sum\limits_{u:u\sim x} c(ux) F(ux,t-\tau+1), & \tau\in(t,1-t),\\[\bigskipamount]
-\dfrac12\, F(yx,t-\tau+1)-\dfrac12\, F(yx,t+\tau-1)\\
\quad +\dfrac{1}{m^0(x)}\,
\sum\limits_{u:u\sim x} c(ux)F(ux,t-\tau+1),\\
\quad +\dfrac{1}{m^0(y)}\,
\sum\limits_{v:v\sim y} c(yv)F(yv,t+\tau-1),& \tau\in (1-t,1),
\end{cases}
\]
which gives
\begin{align*}
G(xy,t)&=\int_0^t C(\tau)F(xy,t)d\tau+\int_t^{1-t} C(\tau)F(xy,t)d\tau\\
&\quad +\int_{1-t}^1 C(\tau)F(xy,t)d\tau\\
&=\dfrac12\,\bigg(\int_0^t F(xy,t-\tau)d\tau+\int_0^{1-t} F(xy,t+\tau)d\tau\\
&\qquad-\int_t^1 F(yx,t-\tau+1)d\tau -\int_{1-t}^1 F(yx,t+\tau-1)d\tau\bigg)\\
&\qquad+\dfrac{1}{m^0(x)}\, \sum\limits_{u:u\sim x}c(ux)\int_t^1 F(ux,t-\tau+1)d\tau\\
&\qquad+\dfrac{1}{m^0(y)}\, \sum\limits_{v:v\sim y}c(yv)\int_{1-t}^1 F(yv,t+\tau-1)d\tau.
\end{align*}
The sum of the first four terms equals zero due to
\begin{multline*}
\int_0^t F(xy,t-\tau)d\tau+\int_0^{1-t} F(xy,t+\tau)d\tau\\
{}-\int_t^1 F(yx,t-\tau+1)d\tau
-\int_{1-t}^1 F(yx,t+\tau-1)d\tau\\
=\int_0^t F(xy,\tau)d\tau+\int_t^1 F(xy,\tau)d\tau
-\int_t^1 F(yx,\tau)d\tau-\int_0^t F(yx,\tau)d\tau\\
=\int_0^1 F(xy,\tau)d\tau-
\int_0^1 F(yx,\tau)d\tau=0,
\end{multline*}
and we arrive at
\begin{align*}
G(xy,t)=&\,\dfrac{1}{m^0(x)}\, \sum\limits_{u:u\sim x}c(ux)\int_t^1 F(ux,t-\tau+1)d\tau\\
&\quad+\dfrac{1}{m^0(y)}\, \sum\limits_{v:v\sim y}c(yv)\int_{1-t}^1 F(yv,t+\tau-1)d\tau\\
=&\,\dfrac{1}{m^0(x)}\, \sum\limits_{u:u\sim x}c(xu)\int_0^{1-t} F(xu,\tau)d\tau\\
&\quad+\dfrac{1}{m^0(y)}\, \sum\limits_{v:v\sim y}c(yv)\int_0^t F(yv,\tau)d\tau=AF(xy,t).
\end{align*}
For $t\in[\frac12,1]$
we have $1-t\in [0,\frac 12]$, and by the preceding computation we have $G(xy,t)=G(yx,1-t)=AF(yx,1-t)=AF(xy,t)$.
\end{proof}

It is worth noting that a certain representation of the solutions to the wave equation
is available for finite networks with non-constant edge lengths too~\cite{Kor-Pryad,dan},
and it would be interesting to understand whether an analog of the averaging operator
can be calculated in a more or less explicit way.

\section*{Acknowledgments}

The first named author gratefully acknowledges the financial support from the German Research Foundation (DFG).
The second named author was supported by ANR NOSEVOL and by GDR DYNQUA.
A large of part of this work was done during the mini-workshop ``Boundary value problems and spectral geometry'' 
in winter 2012 at the Mathematical Institute in Oberwolfach~\cite{ober}, during the visit
of the second named author  at the Mathematical Institute of the Friedrich Schiller University in Jena in summer 2012,
and during the stay of the both authors at the Camille Jordan Institute in Lyon in spring 2013,
and we express our gratitude to these institutions
for the warm hospitality and the facilities provided.

\end{document}